\documentclass[reqno]{amsart}
\usepackage{amsmath}
\usepackage[nobysame]{amsrefs}
\usepackage{array}
\usepackage{amsfonts}
\usepackage{url} 
\usepackage{xcolor} 
\usepackage{fancyhdr}
\usepackage{multirow}
\usepackage{graphicx}
\usepackage{relsize}
\usepackage{amssymb}  
\usepackage{hyperref}
\usepackage{mathtools}
\usepackage{indentfirst}
\usepackage{comment}
\usepackage{enumerate}
\usepackage{float}
\usepackage[normalem]{ulem}

\newtheorem{theorem}{Theorem}[section]
\newtheorem*{question*}{Question}
\newtheorem{question}[theorem]{Question}
\newtheorem{lemma}[theorem]{Lemma}
\newtheorem{corollary}[theorem]{Corollary}
\newtheorem{remark}[theorem]{Remark}
\newtheorem{example}[theorem]{Example}

\newtheorem{definition}[theorem]{Definition}

\hypersetup{colorlinks=true,linkcolor=blue, linktocpage}
\hypersetup{colorlinks,
citecolor=blue,
allcolors=blue
}
\DeclareMathOperator{\dist}{\rho}
\DeclareMathOperator{\diam}{diam}
\DeclareMathOperator{\Int}{int}
\DeclareMathOperator{\Rec}{Rec}
\DeclareMathOperator{\Per}{Per}
\DeclareMathOperator{\End}{End}
\DeclareMathOperator{\Cut}{Cut}
\DeclareMathOperator{\C}{C}
\DeclareMathOperator{\lcm}{lcm}
\DeclareMathOperator{\Fix}{Fix}
\DeclareMathOperator{\Br}{Br}
\DeclareMathOperator{\V}{V}

\title[Limit sets in hyperspace]{On limit sets and equicontinuity in the hyperspace of continua in dimension one}
\author[D. Jeli\'c]{Domagoj Jeli\'c}
\address[D. Jeli\'c]
{Faculty of Science, University of Split, Ru\dj era Bo\v{s}kovi\'ca 33, 21000 Split, Croatia
}
\author[P. Oprocha]{Piotr Oprocha}
\address[P. Oprocha]{
	Centre of Excellence IT4Innovations - Institute for Research and Applications of Fuzzy Modeling, University of Ostrava, 30. dubna 22, 701 03 Ostrava 1, Czech Republic.
}
\subjclass{37E25, 54F16.}
\keywords{Graph map, equicontinuous, limit set, recurrence, hyperspace map.}
\date{\today}
\begin{document}

\begin{abstract}
	The paper studies the structure of $\omega$-limit sets of map $\tilde{f}$ induced on the hyperspace $C(G)$ of all connected compact sets, by
	dynamical system $(G,f)$ acting on a topological graph $G$. In the case of the base space being a topological tree we additionally show that $\tilde{f}$ is always almost
	equicontinuous and characterize its Birkhoff center.
%
%
\end{abstract}

\maketitle

\section{Introduction}
Qualitative theory of dynamical systems tries to understand long time behavior of the system under observation. 
In 
this framework, it is interesting to understand how the majority of points behave, or how collective dynamics behaves. From statistical point of view it can be 
achieved by understanding evolution of densities or probability measures. From topological perspective we can try to look at evolution of certain sets. A natural way of doing this
is extending given continuous selfmap $f$ of a compact metric space $X$,
to the so-called \emph{induced} mappings $\overline{f}$ and $\tilde{f}$ on the hyperspace $2^X$ of compact subsets of $X$ and the hyperspace $C(X)$ of continua in $X$, respectively, both defined in an obvious way.  
These induced maps, if we equip the hyperspace $2^X$ with \emph{Hausdorff metric}, become dynamical systems themselves.
The main question asks how much richer is dynamics of these maps, and how much dynamics governed by evolution of sets is different compared to individual evolution of points.

 The first study of the relation between 
 these two realms dates back to 1975 and was done by Bauer and Sigmund in~\cite{Bauer}.
 They proved that while evolution of probability measures is to some extent similar to base map $f$, the dynamics of $\overline{f}$ usually leads to much more complicated dynamics. 
{Since then, many authors were interested in various questions regarding this topic.
	Those include, although are not limited to, the notions such as: transitivity, mixing and weakly mixing~\cite{Banks, KOCSF, Dendrite1} where it was for instance proved that in the case of a graph or dendrite base map, the induced map on continua is never transitive. 
	Furthermore, the shadowing of the induced map was also studied lately~\cite{Fernandez, Arbieto}.
	The most recently, the authors in~\cite{Djoric} calculated polynomial entropy of induced circle and interval homeomorphism, while the authors of~\cite{Ju}, among the other things, studied the structure of $\omega$-limit sets of induced map on the hyperspace of circle subcontinua.
	Let us emphasize that the latter work is generalized by the results of this paper. 
	The topological entropy, as one of the most explored indicators of chaotical behavior of dynamical systems, was also studied in this context 
 (e.g. see~\cite{KOCSF, Banks, Lampart}) and it turned out that complexity of dynamics on $\C(X)$, which is relatively small subset of $2^X$,
 may grow very much when compared with $f$. In fact, the only known cases when entropy of $\tilde f$ remains the same as of $f$ are one dimensional spaces such as interval \cite{MatviiInterval}, or more generally topological graphs \cite{Jelic}.
  Dynamics on more complicated one-dimensional spaces, such as dendrites (see~\cite{Dendrite1} or~\cite{Dendrite2}) is not that tightly bound to dynamics of $f$ and can reveal much richer orbits structure.
 	
  In the present paper we go beyond results of \cite{Jelic}, providing a characterization of possible $\omega$-limit sets, topological center and equicontinuity points in the induced system $\tilde f$  when the underlying space is a topological graph or a topological tree. }
The first main result of this paper is the following theorem.

\begin{theorem}\label{thm:main}
Let $G$ be a topological graph and let $\C(G)$ be the hyperspace of all subcontinua of $G$.
	For any $A\in\C(G)$  (at least) one of the following properties holds:
\begin{enumerate}[(i)]
\item\label{asy_per} $A$ is asymptotically periodic,
\item\label{wandering} $A$ is wandering, i.e. $f^k(A)\cap f^j(A)=\emptyset$ for all $j\neq k$, or
\item\label{circ} there is some circumferential set $M=E(K,f)$ and some $k\geq 0$ such that $f^k(A)\subset K$.
\end{enumerate}
\end{theorem}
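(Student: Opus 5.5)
We argue by cases on the orbit $A, f(A), f^2(A),\dots$ of a continuum $A\in\C(G)$, always using that every $f^n(A)$ is a subcontinuum of $G$. \emph{Case 1:} the orbit is pairwise disjoint. Then $A$ is wandering and (ii) holds. Otherwise there are $k<j$ with $f^k(A)\cap f^j(A)\neq\emptyset$. Put $p=j-k$, $B=f^k(A)$ and $g=f^p$. Then $B\cap g(B)\neq\emptyset$, so $B\cup g(B)$ is connected. Hence the sets
\[
B_n=\bigcup_{i=0}^{n} g^i(B)
\]
form an increasing sequence of subcontinua of $G$. Let $K$ be the closure of their union. It is a $g$-invariant subcontinuum ($g(K)\subset K$) that contains the whole $g$-orbit of $B$. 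Since it suffices to prove the claim for $f^k(A)$ and for the map $f^p$ (asymptotic periodicity and containment pass back to $A$ and $f$), we may assume $k=0$ and work with $g$.

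\emph{Case 2:} $K$ contains no circle, i.e.\ $K$ is a tree, or $K$ contains circles but the dynamics of $g$ does not wrap around them. Here the aim is asymptotic periodicity, by a monotonicity argument. I would use the finitely many branch and end points of $G$ and the structure of subcontinua of a graph, where a continuum is determined by finitely many ``endpoint'' data on edges. For the sequence $g^n(B)$ one shows that, eventually and along a subsequence of a suitable period, each ``endpoint'' of $g^{n}(B)$ on each edge moves monotonically. By compactness it then converges, and the limit is a periodic continuum $L$ with $g^{nq}(B)\to L$. The intended model is the interval case: there the orbit of an interval with $I\cap g(I)\ne\emptyset$ either stabilizes or its endpoints move monotonically, which is the classical argument for $\tilde f$ on $C([0,1])$. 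The graph case should reduce to it edge by edge, once the combinatorial pattern of which edges and branch points $g^n(B)$ contains has become eventually periodic. That pattern takes only finitely many values, and the nesting $B_n$ forces it to stabilize.

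\emph{Case 3:} the monotone argument fails. This happens exactly when some endpoint keeps moving around a circle in $K$, so that $g^n(B)$ keeps growing around a loop without the length data converging to a periodic pattern. Here I would invoke the paper's notion of circumferential set: $M=E(K',f)$ for a suitable subgraph $K'$ containing such a circle, on which the dynamics behaves like an irrational rotation or a degree-one circle map. One then has to show that some iterate $f^m(A)$ lands inside $K'$, which gives (iii). The key steps are the following. First, isolate the circle $S\subset K$ along which unbounded winding occurs, using rotation-type behavior of lifts. Second, show that the trees hanging off $S$ are eventually absorbed or mapped into the set $K'$ defining $E(K',f)$. Third, conclude.

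The main obstacle is the dichotomy between Cases 2 and 3. One must prove that failure of monotone convergence forces genuine circumferential behavior. This means controlling the case where $g^n(B)$ partially covers a circle and oscillates. I would try first to lift $g|_S$ to $\mathbb R$ and use rotation number theory: a rational rotation number gives periodic endpoints, and hence asymptotic periodicity, while an irrational one gives the circumferential case. This matches the circle result of the cited work \cite{Ju}, which the theorem generalizes.
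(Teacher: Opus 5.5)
\textbf{There is a genuine gap.} The two steps that carry all the content, eventual monotonicity of endpoints in Case~2 and the Case~2/Case~3 dichotomy, are asserted rather than proved, and the proposed mechanisms do not work.

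\emph{Case~2.} The only monotonicity $\tilde f$ has is with respect to inclusion: $C\subset C'$ implies $f(C)\subset f(C')$. This gives convergence of $g^{nq}(B)$ only once you have a self-covering configuration, namely $g^q(C)\supset C$ or $g^q(C)\subset C$ for some continuum or arc $C$ in the orbit. Nothing in your argument produces one. The nesting of $B_n=\bigcup_{i\le n}g^i(B)$ only stabilizes the combinatorics of the \emph{union}. It says nothing about the individual images $g^n(B)$: their pattern of edges and vertices need not stabilize, and their endpoints may be images of interior turning points rather than of previous endpoints. Nor does the graph case reduce edge by edge to the interval, since $g$ does not map edges to edges. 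For trees alternative (iii) is void, so your Case~2 alone would have to show that every non-wandering subcontinuum of a tree is asymptotically periodic. That already contains Matviichuk's theorem, and the sketch gives no argument for it.

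\emph{Case~3.} A circle $S\subset K$ need not satisfy $g(S)\subset S$, so there is no circle map to lift. Even when it does, the degree need not be one, and non-invertible degree-one maps may have rotation intervals rather than rotation numbers. Irrational rotation numbers can then coexist with periodic points. In that situation the relevant maximal $\omega$-limit set is basic, not circumferential, since $E(K,f)$ is circumferential only when $\Per(f)\cap K=\emptyset$. So ``irrational $\Rightarrow$ (iii)'' fails. Finally, ``the trees hanging off $S$ are eventually absorbed'' is exactly the claim $f^m(A)\subset K'$ that must be proved, and no mechanism is offered.

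\textbf{The missing idea} is a way to identify the candidate limit in advance.
\begin{enumerate}
\item[(a)] The paper applies Auslander--Ellis in $(\C(G),\tilde f)$ (Corollary~\ref{cor:proximal}). This gives a recurrent $D$ with $f^{n_k}(A)\to D$ and $f^{n_k}(D)\to D$.
\item[(b)] It then invokes the classification $\Rec(\tilde f)=\Per(\tilde f)\cup\{\{x\}\colon x\in\Rec(f)\}\cup\mathcal R$ from Theorem~\ref{thm:rec-cont}.
\item[(c)] If $D\in\mathcal R$, or $D$ is a point of a circumferential set, Lemma~\ref{lem:circ} yields (iii).
\item[(d)] A degenerate $D=\{d\}$ is handled by Lemmas~\ref{lem:degenerate_d} and~\ref{lem:d-not-contained}.
\item[(e)] For nondegenerate periodic $D$, one passes to an iterate so that $D$ is fixed and proves two separate facts. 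First, no point outside $D$ lies in infinitely many $f^n(A)$; this uses the self-covering Lemma~\ref{lem:self-covering} together with a pigeonhole argument on arcs. Second, every $x\in D$ is approached by $f^n(A)$. Here one takes a backward branch of $x$ inside $D$ and treats each possible $\alpha$-limit set (periodic, solenoidal, basic, circumferential) using Blokh's spectral decomposition; see Lemmas~\ref{lem:periodic}--\ref{lem:no_periodic}.
\end{enumerate}
This machinery is what replaces your monotonicity heuristic. Without some such device the proposal does not establish convergence of $\tilde f^n(A)$ to a periodic orbit, nor does it justify the circumferential alternative.
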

	
	This result generalizes the main results of~\cite{Matviichuk} and~\cite{Jelic}.
	Namely, in~\cite{Matviichuk}, Matviichuk proved that any subcontinuum of a topological tree is either asymptotically periodic or asymptotically degenerate (or both).
	On the other hand, in~\cite{Jelic} the authors proved that, in the case of a topological graph, the entropy of induced system of subcontinua is equal to the entropy of the base system and 
	provided a full characterization of the recurrent continua.
	These results are a starting point for our approach to proving the Theorem~\ref{thm:main}, which then will be used as a tool in further analysis of dynamics of $\tilde f$.
	
	In the case of a topological tree, we will describe the center of $\tilde f$, that is, the closure of the set of recurrent subtrees, and, furthermore, show almost equicontinuity of the induced system.
	
	The former of two results is a generalization of a theorem from~\cite{MatviiCenter} where Matviichuk proved that, in the case of a compact interval, every subinterval 
	fully covered by the orbit of a periodic interval is in fact either 
	\begin{enumerate}
\item periodic subinterval,
\item a singleton formed by a point from the closure of the set of periodic points of the base map,
\item is asymptotically degenerate, nondegenerate subinterval.	
	\end{enumerate}
{Inspired by the above, we provide in Theorem~\ref{thm:center-charact} a full characterization of the Birkhoff center for induced hyperspace map on topological trees.
		In the theorem, we describe those nondegenerate trees belonging to the center which are not periodic. Roughly speaking, they are wandering intervals attracted by solenoids.}
%
%
	
Finally, in the case of a topological tree, we prove somehow surprising result, that the set of points of equicontinuity is dense $G_\delta$ in the hyperspace of subcontinua, that is:

\begin{theorem}\label{thm:almost_equi}
Let $f\colon T\to T$ be a continuous selfmap of a tree.
Then $(C(T),\tilde{f})$ is almost equicontinuous.
\end{theorem}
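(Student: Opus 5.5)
Almost equicontinuity means that the set $\mathrm{Eq}(\tilde f)$ of equicontinuity points of $\tilde f$ is residual, and hence dense $G_\delta$, in $C(T)$. The plan is to produce a dense open family of continua on which orbits are controlled. Equicontinuity points form a $G_\delta$ in any compact system. So it suffices to show that for every $\varepsilon>0$ the set
\[
E_\varepsilon=\{A\in C(T): \exists\,\delta>0\ \forall B,B'\in B(A,\delta)\ \forall n\ge 0\ \ d_H(f^n(B),f^n(B'))<\varepsilon\}
\]
is dense; it is open by definition. Fix a nonempty open $\mathcal U\subset C(T)$. We shrink it in two steps: first to an open set of nondegenerate continua, then to a basic open set of continua containing a fixed small tree $J$ in their interior and contained in a fixed slightly larger tree. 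Any $B$ in a small neighbourhood of such an $A$ then satisfies $J\subset B\subset J'$, where $J'$ is a small neighbourhood of $A$. Hence $f^n(J)\subset f^n(B)\subset f^n(J')$ for all $n$, and the problem reduces to choosing $J\subset J'$ so that $d_H(f^n(J),f^n(J'))<\varepsilon$ for all $n$ (monotonicity of images is the key tool).

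To choose $J$ we use the dichotomy for trees from Theorem~\ref{thm:main} together with Matviichuk's result quoted above: every subcontinuum of a tree is asymptotically periodic or asymptotically degenerate. Circumferential sets cannot occur in a tree, so for the small tree $J$ we have two cases.

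\textbf{Case 1.} $J$ contains a nondegenerate subcontinuum whose orbit eventually covers a periodic continuum with interior. Enlarging $J$ within $A$ if needed, we can assume $f^{m}(J)$ contains a periodic subtree $P=f^p(P)$ with $\bigcup_i f^i(P)$ invariant, and that $f^m(J)$ is close to $P$. Then $f^n(J)\supset f^{n-m}(P)$ stabilises along the periodic cycle. The needed statement is then: for $J'$ close enough to $J$, the sets $f^n(J')$ are eventually trapped in an arbitrarily small neighbourhood of the cycle of $P$. This follows by choosing $J$ generically among the candidates, for instance with $f^m(J)$ equal to a maximal periodic tree in the cycle; on a tree such maximal cycles are finite unions of trees, which gives upper semicontinuity. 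The initial finitely many iterates are handled by uniform continuity.

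\textbf{Case 2.} Every subcontinuum of $J$ is asymptotically degenerate, i.e. $\operatorname{diam} f^n(J)\to0$, so $J$ behaves like a wandering or asymptotically periodic "homterval". Here we pick $J'$ with the same property. This is possible because, if every small tree near $A$ failed it, we would be in Case~1 for some nearby tree and could replace $J$ accordingly. Then $f^n(J)$ and $f^n(J')$ both have small diameters for large $n$ and intersect, since $J\subset J'$, so their Hausdorff distance is small. The initial iterates are again handled by uniform continuity.

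I expect the main obstacle to be Case~1: controlling $f^n(J')$ uniformly for all $n$ when $J'$ slightly exceeds $J$. A slight enlargement could reach a larger periodic cycle or a solenoidal set, where images may jump. The approach I would try first is a Baire or semicontinuity argument. The map $A\mapsto \limsup_n f^n(A)$, or the map to the "eventual covered set" $\bigcap_N\overline{\bigcup_{n\ge N} f^n(A)}$, is monotone in $A$ and takes values in a family with finitely many "levels" of periodic cycles at each scale. So at a generic $A$ in $\mathcal U$ it is locally constant up to $\varepsilon$, and we choose $J$ at such a point. Solenoidal behaviour is tamed by noting that nested periodic cycles of period $p$ have components of diameter tending to $0$ as $p\to\infty$, and on those components the Hausdorff error is below $\varepsilon$.
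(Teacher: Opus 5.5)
\textbf{Where the proposal holds up.} Your reduction to density of the open sets $E_\varepsilon$ is sound and in line with the paper. So is the sandwich $f^n(J)\subset f^n(B)\subset f^n(J')$; to make $\{B\colon J\subset\Int B,\ B\subset\Int J'\}$ open, first discard continua meeting $\End(T)$. Your Case~2 amounts to the paper's first case: if $\mathcal U$ contains an asymptotically degenerate continuum, use it as the outer set. The case split should be on $\mathcal U$, not on $J$.

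\textbf{The gap: Case~1.} This case is the real content of the theorem, and it is not proved.
\begin{itemize}
\item The claim that for $J'$ close to $J$ the sets $f^n(J')$ are eventually trapped near the cycle of $P$ is false for a general $J$. An arbitrarily small enlargement can have a much larger limit cycle. Choosing $f^m(J)$ to be ``a maximal periodic tree'' gives no semicontinuity.
\item Your fallback uses the wrong invariant. The set $\limsup_n f^n(A)=\bigcap_N\overline{\bigcup_{n\ge N}f^n(A)}$ is only the union of the limit cycle, so it forgets the phase. Closeness of these unions does not bound $\dist_H(f^n(J),f^n(J'))$ for each $n$. For example, the $2$-cycle $\{[0,1],[\frac12,\frac32]\}$ and the fixed continuum $[0,\frac32]$ have equal unions, and each member of the cycle lies in $[0,\frac32]$. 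Yet corresponding iterates stay $\frac12$ apart.
\item The claims of ``finitely many levels of periodic cycles at each scale'' and of local constancy at generic points are exactly what must be proved. You give no argument for either.
\end{itemize}

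\textbf{What is missing.} You need one iterate $N$ such that $\Phi(B)=\lim_n f^{nN}(B)$ exists for all $B$ in an open set, together with a point where this monotone map is continuous.
\begin{itemize}
\item The paper obtains $N=mp$, with $m=\lcm\{1,\dots,|\End(T)|\}$, from Theorem~\ref{thm:periods}. For any $B\supset A$, the limit $\lim_n f^{npr}(B)$ contains $F_A$, hence a fixed point of $f^p$, so its period $r$ under $f^p$ divides $m$. This uniform period bound is the ingredient your proposal lacks.
\item It then restricts to the one-parameter family $B_\gamma=\overline{N(A_{n_1},\gamma)}$, on which $\gamma\mapsto F_\gamma=\lim_n f^{nmp}(B_\gamma)$ is monotone. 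Applying Lemma~\ref{lem_accumulation_pts} edge by edge gives $\gamma$ and $\alpha_1<\gamma<\alpha_2$ with $F_{\alpha_1},F_{\alpha_2}$ close to $F_\gamma$.
\item The uniform $n_0$ comes from convergence at the two fixed sets in $f^{nmp}(B_{\alpha_1})\subset f^{nmp}(C)\subset f^{nmp}(B_{\alpha_2})$.
\end{itemize}
Once such a $\Phi$ is available, your Baire idea could also be made rigorous. The reason is that $\Phi$ is a pointwise limit of the continuous maps $B\mapsto f^{nN}(B)$.

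\textbf{The solenoid remark.} It is false: components of cycles generating a solenoid need not shrink. Indeed $\bigcap_n K_n$ can have nondegenerate components (the family $\mathcal S$ in Theorem~\ref{thm:center-charact}). It is also unnecessary. Once $\mathcal U$ contains no asymptotically degenerate continuum, Theorem~\ref{thm:main} makes all its elements asymptotically periodic, so no solenoidal limits occur.
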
 
It extends the main theorem of~\cite{Rybak} where Rybak proved that, for a compact interval, its induced system on subintervals contains a Lyapunov-stable point, which in modern terminology means that there is a point of equicontinuity. 
Note that the equicontinuity in \eqref{thm:almost_equi} is completely independent of dynamics of $f$. In particular, we can start with a map with the specification property, high entropy, etc.
But then, when passing to the hyperspace $\C(T)$, large structure is built over original space $T$, where dynamics behaves in some kind of regularity. It to some extend explains why the entropy cannot ``explode'' in these cases.

The proofs of Theorem~\ref{thm:center} and Theorem~\ref{thm:almost_equi}, in huge part rely on tools provided by Theorem~\ref{thm:main} and celebrated ``spectral decomposition'' theorem of Blokh \cite{Blokh}, but an additional, important ingredient is a proper characterization of the set of periods of trees periodic in the induced system and containing a {fixed point of the base map}.
	In what follows we will prove that if $f\colon T\to T$ is a tree map and $A$ is a subtree of $T$ which is a periodic point for $\tilde{f}$ of some minimal period $p$ and
if $A$ contains a fixed point $x\in T$ of $f$, then $p$ cannot be greater than some bound $M$, where this number solely depends on the number of endpoints of $T$.
 This is a generalization of the result of Fedorenko~\cite{Fedorenko}, where it was proved that this bound equals $2$ when the underlying space of the base system is a compact interval.
In case of topological graphs there is no clear candidate that can be effectively used in the place of endpoints of the tree. Because of this difficulty, we were not able to prove the result
in general topological graphs, and so leave it as a problem for future research:
\begin{question}
	Are the statements similar to those of Theorems~\ref{thm:periods}, \ref{thm:center-charact} and~\ref{thm:almost_equi} also true for graph maps?
\end{question}
	
The paper is organized as follows: In Section~\ref{sec:preliminaries} we recall basic definitions and some properties of graph maps and hyperspace dynamics.
	Section~\ref{sec:proof} brings proof of the main Theorem~\ref{thm:main}.
	Section~\ref{sec:periods} is devoted to finding the bound on the periods of periodic points of the induced map in the case of topological trees, which is used later in sections~\ref{sec:center} and~\ref{sec:equicontinuity} to characterize center of the induced map in the tree case as well as to prove its almost equicontinuity.

\section{Preliminaries}\label{sec:preliminaries}
\subsection{Topological graphs}
	A \emph{topological graph} (or short \emph{graph}) is a 
	continuum $G$ 
	 such that there is a one dimensional simplical complex $K$ whose geometric carrier $|K|$ is homeomorphic to $G$.
	 If $G$ is a singleton then we call it a degenerate graph.
	
	If $G$ is a graph, then there is a finite set $\V(G)\subset G$ such that each connected component of $G\setminus \V(G)$ is homeomorphic to an open interval.
	We can assume that $\V(G)$ is minimal subset of $G$ with this property, i.e. that for each $v\in \V(G)$ and each connected neighborhood $U$ of $v$ in $G$, such that $U\cap \V(G)=\{v\},$
	$U\setminus \{v\}$ has either one or at least three components.  
	Having said that, each element of $\V(G)$ will be called \emph{vertex} and
	closure in $G$ of each component of $G\setminus \V(G)$ will be called an \emph{edge} of $G$.
	Notice that each edge is either homeomorphic to a closed interval or to the circle $\mathbb{S}^1$.
	Edges homeomorphic to $\mathbb{S}^1$ will be called \emph{loops} of $G$.
	A \emph{branching point} of $G$ is a point in $G$ having no neighborhood homeomorphic to an interval.
	The set of all branching points of $G$ is denoted by $\Br(G)$.
	It is clear that each branching point is an element of $\V(G)$.
	Elements of $\V(G)$ different from branching points will be called \emph{endpoints} of $G$.
	We denote the set of all endpoints of $G$ by $\End(G)$.
We denote the set of all endpoints of $G$ by $\End(G)$.
For every point $x\in G$, we define the \emph{valence of $x$} as the number of connected components of the set $U\setminus \{x\}$, where $U$ is any neighborhood of $x$ in $G$ not containing branching points nor closed curves.
 Therefore, the sets $\Br(G)$ and $\End(G)$ consist exactly of those points of $G$ whose valences are $1$ and at least $3$, respectively, while the valences of all the other points of $G$ equal $2$. 
	Moreover, every nondegenerate subcontinuum of $G$ will be called a \emph{subgraph}.
	A \emph{topological tree} (or short \emph{tree}) is a graph not containing subsets homeomorphic to $\mathbb{S}^1$.
	In this context, a tree which is singleton will be called a \emph{degenerate tree}.
	Note that for each topological tree $T$ and each point $x\in T\setminus \End(T)$, the set $T\setminus \{x\}$ is not connected.
	Therefore, the points of a tree which are not its endpoints are called the \emph{cut points}.
	The set of all cut points points of a tree $T$ is denoted by $\Cut(T)$.

\begin{remark}\label{rem:ambient}
	From this moment on we identify $G$ with a geometric carrier of simplicial complex defining it.
	Strictly speaking, we assume that graph $G$ is a subset of $\mathbb{R}^3,$ its edges lie on straight lines
	and on each edge we have a metric assigning to it the length one.
		Moreover, the metric $d$ on $G$ now can be chosen in a way that $d(x,y)$ equals the length of the shortest path between $x$ and $y.$
		Induced topology matches the original subspace topology of $G$.
	\end{remark}

\subsection{Dynamical systems}
	A (discrete, topological) \emph{dynamical system} is a pair $(X,f)$ where {$(X,\dist)$} is a compact metric space and $f\colon X\to X$ is a continuous map.
	We will often identify dynamical system $(X,f)$ with the map $f$.
	Any continuous map $f\colon G\to G$, where $G$ is a graph, will simply be called a \emph{graph map}.
	For a point $x\in X$, we define its \emph{orbit} as the set $\mathcal{O}_f(x)=\{f^n(x)\colon n\geq 0\}$ and its \emph{trajectory} as the sequence $\left(f^n(x)\right)_{n\geq 0}$.
{Similarly, the \emph{orbit of a set} $A\subset X$ is the set $\mathcal{O}_f(A)=\bigcup_{n=0}^{\infty}f^n(A).$
	When the map $f$ is clear from the context, we simply write $\mathcal{O}(x)$.}
	A point $x\in X$ is said to be \emph{periodic} if there is some $p>0$ such that $f^p(x)=x.$
	The smallest such $p$ is called the \emph{period} of $x$.
	We denote the set of all periodic points of $f$ by $\Per(f)$.
	A point $x\in X$ is said to be \emph{eventually periodic} if there is some $n\geq 0$ such that $f^n(x)\in\Per(f).$ 
	Equivalently, a point is eventually periodic if and only if it has finite orbit.

	If $(X,f)$ and $(Y,g)$ are two dynamical systems, a \emph{semi-conjugacy} between $f$ and $g$ is any surjective continuous map $\phi\colon X\to Y$ such that $\phi\circ f=g\circ \phi$.
	If in addition $\phi$ is bijective, and hence a homeomorphism, then it is a \emph{conjugacy} between $f$ and $g$ and we say that these two dynamical systems are \emph{conjugate}.

	A set $A\subset X$ is said to be {\emph{$f$-invariant}} if $f(A)\subset A$ and {\emph{strongly $f$-invariant}} if $f(A)=A.$
	A closed invariant set $M\subset X$ without closed and invariant proper subset is called \emph{minimal}.
	It is well known that closed invariant set $M\subset X$ is minimal if and only if the orbit of each point $x\in M$ is dense in $M$.	
	If $X$ is minimal, we say that dynamical system $(X,f)$ is minimal or, simply,  
	that $f$ is \emph{minimal map}.
	If $f^n$ is minimal for all $n>0$, $f$ is said to be \emph{totally minimal}.
	
	Map $f\colon X\to X$ is \emph{transitive} if for every pair of nonempty open subsets $U, V\subset X$ there is some $n>0$ such that $f^n(U)\cap V\neq\emptyset$.
	If $X$ does not contain isolated points, then $f$ is transitive if and only if there is $x\in X$ whose orbit is dense in $X$.	
	A map $f\colon X\to X$ is \emph{mixing} if for every pair of nonempty open subsets $U, V\subset X$ there is an $N\geq 0$ such that $f^n(U)\cap V\neq \emptyset$ for all $n\geq N$.	
	It is clear that each mixing map is transitive but not vice-versa.
	For a point $x\in X$, the limit set of its trajectoy is called \emph{$\omega$-limit set of $x$} and denoted by $\omega_f(x)$.
	In other words, $y\in\omega_f(x)$ if and only if there is a strictly increasing sequence of positive integers $(n_i)_{i\geq 0}$ such that $\left(f^{n_i}(x)\right)\to y$ as $i\to\infty$.
	A point $x\in X$ is \emph{non-wandering} if for every neighborhood $U$ of $x$ and every $N>0$ there is some $n>N$ such that $f^n(U)\cap U\neq\emptyset.$	Otherwise, we call it a \emph{wandering} point.
	A point $x\in X$ is \emph{recurrent} if $x\in\omega_f(x)$. 
	The set of all recurrent points is denoted by $\Rec (f)$ and we denote $\omega(f)=\bigcup_{x\in X}\omega_f(x).$
	We also define $\C(f)=\overline{\Rec(f)}$ and call this set the \emph{(Birkhoff) center} of $(X,f)$.	
    {Finally, a point $x\in X$ is said to be \emph{asymptotically periodic} if $\omega_f(x)$ is a periodic orbit.}
	
	A \emph{backward branch} of a point $x\in X$ is any sequence $\{x_i\}_{i\leq 0}$ in $X$ such that $x_0=x$ and $f(x_i)=x_{i+1}$ for each $i<0$. 
	A point $y$ belongs to the \emph{$\alpha$-limit set of a backward branch $\{x_i\}_{i\leq 0}$}, denoted by $\alpha_f\left(\{x_i\}_{i\leq 0}\right)$, if and only if there is a strictly decreasing sequence of negative integers $\{n_i\}_{i\geq 0}$ such that $x_{n_i}\to y$ as $i\to \infty$.
	It is easy to
see that both $\omega$-limit sets and $\alpha$-limit sets of backward branches are closed strongly
invariant sets.

 A point $x\in X$ is said to be \emph{equicontinuous} if
	\[\forall\epsilon>0,\ \exists\delta>0,\ \forall y\in B(x,\delta),\ \forall n\geq 0,\ \dist(f^n(y),f^n(x))<\epsilon.\]
	Let us denote by $\mathcal{E}$ the set of all equicontinuous points of $(X,f)$.
	Under this notation, $(X,f)$ is said to be \emph{equicontinuous} if $\mathcal{E}=X$.
	Furthermore, we say that $(X,f)$ is \emph{almost equicontinuous} if $\mathcal{E}$ is a residual set, or equivalently, since $X$ is compact metric space, if $\mathcal{E}$ is dense $G_\delta$ set in $X$.

Two points $x_1,x_2\in X$ are said to be \emph{proximal} if there exists an increasing sequence $(n_k)$ of nonnegative integers such that $\lim_k\dist(f^{n_k}(x_1),f^{n_k}(x_2))=0$.
	A set $S\subset\mathbb{N}_0$ is said to be \emph{syndetic} if it can be arranged as an increasing sequence $s_1<s_2<...$ with uniformly bounded gaps $s_{n+1}-s_n$.
	A point $x\in X$ is said to be \emph{uniformly recurrent} if for all $\epsilon>0$ the set $\{n\geq 0\colon \dist(f^n(x),x)<\epsilon\}$ is syndetic.

\begin{definition}
	Let $f\colon X\to X$ and $g\colon Y\to Y$ be two continuous maps of compact metric spaces $X,Y$ and {$M\subset X$} be a closed invariant set.
	A semi-conjugation $\phi\colon X\to Y$ is an \emph{almost conjugacy between $f\vert_M$ and $g$} if
	\begin{enumerate}[(1)]
		\item\label{almost1} $\phi(M)=Y$,
		\item $\forall y\in Y$, $\phi^{-1}(y)$ is connected,
		\item $\forall y\in Y$, $\phi^{-1}(y)\cap M=\partial\phi^{-1}(y)$, where $\partial A$ denotes the boundary of $A$ in $X$,
		\item\label{alomst4} $\exists N\geq 1$ such that $\forall y\in Y$, $\phi^{-1}(y)\cap M$ has at most $N$ elements (and, by \eqref{almost1}, at least one element).
	\end{enumerate}
\end{definition}

When $X$ and $Y$ are graphs then the last condition is a consequence of the previous ones since 
both singletons and subgraphs of $G$ have boundaries of uniformly bounded cardinality.

{ A typical example of dynamical system semi-conjugated to irrational rotation is Denjoy map (see~\cite[Example~14.9]{Devaney}; we comment on it more later when introducing circumferential sets). Another simple example is the truncated tent map $f$ on $[0,1]$ given for some $s>2$ by 
$$
f(x)=\begin{cases}
sx &, x\in [0,1/s]\\
s(1-x) &, x\in [1-1/s,1]\\
1 &, \text{otherwise}
\end{cases}
$$ 
If we take map $\phi\colon [0,1]\to [0,1]$ which collapse the interval $J=[1/s,1-1/s]$ as well as any of the intervals in the preimage $f^{-n}(J)$ to a point $f$ will transform (after proper rescaling of the domain) to the standard tent map
$$
g(x)=\begin{cases}
2x &, x\in [0,1/2]\\
2(1-x) &, x\in [1/2,1]
\end{cases}
$$ 
and $\phi$ an almost conjugacy between $f\vert_M$ and $g$, where $M=[0,1]\setminus \bigcup_{n\geq 0}f^{-n}((1/s,1-1/s))$.

\begin{remark}
It is clear that if $f\vert_M$ and $g$ are almost conjugate by a map $\phi$, then $\phi|_M$ is a semi-conjugacy, but it is not an almost conjugacy except the case it is
conjugacy itself.
\end{remark}
}

\subsection{$\omega$-limit sets in graph maps}
	In~\cite{MaiClosed} the authors proved that the family of all $\omega$-limit sets of a graph map is closed with respect to the Hausdorff metric in the hyperspace of all closed subsets of the graph.
	As a direct consequence, $\omega$-limit set of any point in $G$ lies in a maximal $\omega$-limit set.
	

		The set $\mathcal{O}(K)=\bigcup_{i=0}^kf^i(K)$ is called a \emph{cycle of graphs of period $k$} if $K$ is a subgraph of $G$ such that $K,f(K),...,f^{k-1}(K)$ are pairwise disjoint and $f^k(K)=K.$  

 A \emph{generating sequence} or a \emph{sequence generating a solenoidal set} is any nested sequence of cycles of graphs $K_1\supset K_2\supset\cdots$ for $f$ with periods tending to infinity. By definition $Q=\bigcap_n K_n$ is closed and strongly invariant, i.e. $f(Q)=Q$. Any closed and strongly invariant subset of $Q$ is called a \emph{solenoidal set}. The characterization of Blokh (e.g. see \cite[Theorem 1]{B1}) shows that $Q$ contains a perfect minimal set $Q_{min}=Q\cap\overline{Per f}$ such that $Q_{min}=\omega(x)$, for all $x\in Q$, and a maximal $\omega$-limit set (with respect to inclusion) $Q_{max}$ such that $Q_{max}=Q\cap \omega(f)$ which we will refer to as \emph{maximal solenoid}. 

	If $x$ is a point of a graph $G$, then by a \emph{side $T$ of the point $x$} we mean a family of open, non-degenerate arcs $\{V_T (x)\}$ containing no branching points, with one endpoint at $x$, such that $\bigcap_{V\in T} \overline{V}=\{x\}$ and if $U,V\in T$, then either $V\subset U$ or $U\subset V$. Members of the family $T$ are called $T$-sided neighborhoods of $x$.\\
 Let $f\colon G\to G$ be a graph map and $K\subset G$ be a cycle of graphs. For every $x\in K$, we define the \emph{prolongation set of $x$ with respect to $f|_K$}: 
 \[P_K(x,f)=\bigcap_U\overline{\bigcup_{i=1}^{\infty}f^i(U)},\]
 where $U$ is a relative neighborhood of $x$ in $K$. If the acting map $f$ is clear from the context, we will simply write $P_K(x)$ and when $K=G$ then we will write $P(x,f)$ or simply $P(x)$.  Observe that just as $x$ being recurrent is equivalent to $x\in\omega_f(x)$, $x$ being non-wandering is equivalent to $x\in P(x)$. Obviously, $P(x)$ is an invariant closed set and the map $f|_{P(x)}$ is surjective whenever $x$ is a non-wandering point. Similarly, we define the \emph{prolongation set of $x$ with respect to a side $T$}:
 \[P^T_K(x,f)=\bigcap_{V_T(x)}\overline{\bigcup_{i=1}^{\infty}f^i(V_T(x))},\]
 where the intersection is taken over relative $T$-sided neighborhoods $V_T(x)$ of $x$ in $K$. 
 
 An arc $V\subseteq G$  is \emph{non-wandering} if there is an integer $m\geq 1$ such that $f^m(V)\cap V\neq\emptyset$. 
 If every set $V_T(x)\in T$ is non-wandering then $P^T(x)$ is one of the following (see \cite{Forys}){}:
 	\begin{itemize}
 		\item $P^T(x)$ is a periodic orbit,
 		\item $P^T(x)$ is a cycle of graphs,
 		\item $P^T(x)$ is a solenoidal set $Q$.
 	\end{itemize}
 	
{Let $K\subset G$ be a cycle of graphs.} We define the following sets:
\[E(K,f)=\{x\in K: P_K(x,f)=K\}\]
and
\[E_S(K,f)=\{x\in K: \text{there is a side $T$ such that } P^T_K(x,f)=K\}.\] 
Clearly, $E_S(K,f)\subseteq E(K,f)$. These sets are closed and invariant. If $E(K,f)$ is infinite then, by~\cite[Theorem 2]{B1}, $E_S(K,f)= E(K,f)$. In general, $E_S(K,f)\neq E(K,f)$ and $f(E(K, f))\neq E(K, f)$.
\begin{theorem}[Blokh, \cite{B1,B2}]\label{E}
	Let $K\subset G$ be a cycle of graphs such that $E_S(K,f)$ is non-empty. If $E_S(K,f)$ is finite then it is a periodic orbit. Otherwise, $E_S(K,f)= E(K,f)$ and it is an infinite maximal $\omega$-limit set.
\end{theorem}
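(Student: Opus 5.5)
Since this theorem is quoted from Blokh, I would reconstruct its proof from the prolongation-set machinery rather than from anything proved later. Throughout I replace $f$ by $g=f^k$ on one component $K_0$ of the cycle $K$, so that $g(K_0)=K_0$. Then $E_S(K,f)\cap K_0=E_S(K_0,g)$, because a $T$-sided neighborhood whose $f$-images are dense in $K$ has $g$-images dense in $K_0$ up to a finite shift. I then work in three steps.

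\textbf{Step 1: $E_S(K,f)$ is closed and invariant.} For invariance, take $x\in E_S(K,f)$ with side $T$. Shrinking $V_T(x)$, I would show that $f(V_T(x))$ contains a sided neighborhood $V_{T'}(f(x))$ of $f(x)$. This can fail only if $f$ is constant on some $T$-sided neighborhood. That is impossible, since then $P^T_K(x)$ would be the orbit of a single point and not $K$. Since the $f$-images of $V_T(x)$ agree up to one step with the images of $V_{T'}(f(x))$, we get $P^{T'}_K(f(x))=K$. Hence $f(E_S)\subset E_S$.

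\textbf{Step 2: the finite case.} A finite closed invariant set contains a periodic orbit $O$. The goal is to show $E_S=O$. Take $x\in E_S$ with side $T$. Since $P^T_K(x)=K$, the images of any $V_T(x)$ eventually meet every sided neighborhood of every point of $O$. I would combine this with a \emph{covering lemma} for graph maps. The lemma says that if $f^n(V)$ and $f^m(V)$ overlap, then by connectedness one image covers an arc adjacent to the other, so $x$ is forced to be nonwandering along its side, and in fact $g^{j}(V_T(x))\supset V_T(x)$ for some $j$.

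Nested covering $g^{j}(V)\supset V$ of arcs shrinking to $x$ then produces a periodic point inside each $V_T(x)$. Combined with finiteness and invariance of $E_S$, this forces $x$ itself to be periodic. Otherwise $x$ would be an accumulation point of distinct periodic points whose own sided neighborhoods also spread over $K$, and these would lie in $E_S$, contradicting finiteness. Finally, two distinct periodic orbits in $E_S$ would give two disjoint finite invariant sets both attracting neighborhoods dense in $K$, and a similar covering argument rules this out. So $E_S$ is a single periodic orbit.

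\textbf{Step 3: the infinite case.} Assume $E_S$ is infinite; the steps are as follows.
\begin{enumerate}[(a)]
\item Show $E_S$ is perfect. An isolated point would, by the covering argument of Step 2, be periodic with a sided neighborhood $V$ satisfying $g^j(V)\supset V$, and its images would be trapped in a finite cycle of arcs. That contradicts $P^T=K$ unless $E_S$ is finite.
\item Prove $E(K,f)\subset E_S(K,f)$. For $x\in E$, each neighborhood $U$ of $x$ is the union of finitely many sided neighborhoods (the valence of $x$ is finite). So $\bigcup_i f^i(U)$ dense in $K$ for every $U$ forces one side $T$ with $P^T_K(x)=K$, by a pigeonhole over shrinking $U$.
\item Build a point $y$ with $\omega_f(y)=E_S$. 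Cover $E_S$ by countably many sided neighborhoods $V_n$ of points of $E_S$. Since each $V_n$ spreads over all of $K$, a nested-intervals construction (choosing $W_{n+1}\subset W_n$ with $f^{m_n}(W_{n+1})\subset V_{n+1}$, arcs shrinking) yields $y$ whose orbit visits every $V_n$ infinitely often. Taking the $V_n$ ever smaller, I would also arrange $\omega_f(y)\subset E_S$. For that inclusion, note that any limit point $z$ of the orbit is approached by images of arcs whose further images are dense in $K$, so $P_K(z)=K$.
\item Maximality. If $\omega_f(w)\supset E_S$, then $\omega_f(w)\subset K$ up to a finite transient. The same limiting argument as in (c) gives $\omega_f(w)\subset E$, hence $\omega_f(w)=E_S$.
\end{enumerate}

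I expect the main obstacle to be the covering lemma used in Steps 2 and 3(a): that nonwandering sided arcs whose images accumulate on their own base point actually return \emph{over} themselves, i.e.\ $g^j(V)\supset V$. On graphs, as opposed to intervals, images can wrap around loops. I would try first to pass to a small arc avoiding $\V(G)$ and use the finitely many components of $G\setminus\V(G)$ with a pigeonhole on which side images re-enter. The quoted trichotomy of \cite{Forys} for $P^T(x)$ should also be used to exclude the periodic-orbit and solenoid alternatives at every point of $E_S$.
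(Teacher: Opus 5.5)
The paper does not prove this statement. It is imported from Blokh (\cite[Theorem 2]{B1} together with the result recorded as Theorem~\ref{thm:blokh-almost}), and your reconstruction breaks exactly where that input is needed.

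The sharpest failure is Step~3(b). Your pigeonhole argument never uses that $E_S(K,f)$ is infinite. It would therefore show $E(K,f)\subset E_S(K,f)$ for every cycle of graphs, and that is false. On $K=[-1,1]$ let $f$ fix $0$ and act as a full tent map on each of $[-1,0]$ and $[0,1]$. Every neighborhood of $0$ has images dense in $K$, so $0\in E(K,f)$. But the two sides of $0$ have prolongation sets $[-1,0]$ and $[0,1]$, so $0\notin E_S(K,f)$. Shrinking $U$ only yields $K=\{x\}\cup\bigcup_j P^{T_j}_K(x)$, a union over the finitely many sides, and nothing forces a single term to be all of $K$. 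The identity $E_S=E$ in the infinite case is exactly \cite[Theorem 2]{B1}, and any proof of it has to use the infiniteness in an essential way.

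Steps~3(c)--(d) do not establish transitivity or maximality either. The nested choice of the $W_n$ controls the orbit of $y$ only at the times $m_n$. Your claim that every limit point $z$ of the orbit has $P_K(z)=K$, because $z$ is approached by arcs with dense images, does not follow. A neighborhood of $z$ contains a point of such an arc, not the arc itself, and not a good sided neighborhood of a point of $E_S$. For the Cantor basic set of the truncated tent map in the paper, arcs around points of $E$ pass through gap points $z$ with $P_K(z)\neq K$. Since (d) just refers back to (c), that step is circular. In the paper these properties are part of the imported Theorem~\ref{thm:blokh-almost}, which Blokh proves together with the almost conjugacy to a transitive graph map.

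The earlier steps have gaps of the same kind.
\begin{itemize}
\item In Step~1 the inclusion points the wrong way. $f(V)\supset V'$ only shows that the images of $V'$ are contained in those of $V$. You would need $f(V)\subset\{f(x)\}\cup V'$, which fails whenever $f(V)$ passes through $f(x)$ into several sides.
\item In Steps~2 and 3(a), nothing shows that the periodic points extracted from $g^j(V)\supset V$ lie in $E_S$. Nothing shows either that the images of an isolated point's side are ``trapped.'' The points that provably lie in $E_S$ are preimages $z\in V$ of some $b\in E_S$ at which an iterate maps small sided neighborhoods of $z$ into a good sided neighborhood of $b$. Producing those requires precisely the covering lemma on graphs that you leave open.
\end{itemize}
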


Let $E(K,f)$ be an infinite maximal $\omega$-limit set from Theorem ~\ref{E}. We say that $E(K,f)$ is a \emph{basic set} if $\Per (f)\cap K\neq \emptyset$ and we denote it by $D(K,f)$, and in the opposite case when $\Per (f)\cap K= \emptyset$ we say that $E(K,f)$ is a \emph{circumferential set} and we denote it by $S(K,f)$. We will simply write $D(K)$ and $S(K)$ in the case where $f$ is clear from the context.

	For an infinite $\omega$-limit set $\omega_f(x)$, let
	\[\mathcal{C}(x)=\{X\colon X\subset G \text{ is a cycle of graphs and } \omega_f(x)\subset X\}.\]
	One can show that $\mathcal{C}(x)$ is never empty.
	Following ideas of Blokh in their paper \cite{Snoha}, Ruette and Snoha proved that given $x\in G$ and a cycle of graphs $K$, if
	$\omega_f(x)=D(K)$, or $\omega_f(x)=S(K)$  , then the periods of cycles in $\mathcal{C}(x)$ are bounded from above and $K$ is minimal in $\mathcal{C}(x)$ with respect to inclusion.		
    
{In order to get a better picture of a circumferential set, one may recall the well-known \emph{Denjoy map} (see~\cite[Example~14.9]{Devaney}).
Roughly speaking, we start with an irrational rotation $R$ on the circle $\mathbb{S}^1$ and take any point $z\in \mathbb{S}^1$. 
We cut out each point of the full orbit of $z$ and replace $R^n(z)$ with interval $I_n$ for all $n\in\mathbb{Z}$, keeping \[\sum_{n=-\infty}^\infty I_n<\infty.\]
Finally, we extend $R$ to the union of $I_n$-s by taking any orientation-preserving diffeomorphism $h_n$ taking $I_n$ to $I_{n+1}$ for every $n\in\mathbb{Z},$
obtaining map $\hat{R}$ on the new circle $\hat{\mathbb{S}^1}$.
The obtained system obviously has no periodic points and all the points from the interiors of $I_n$-s are wandering.
Furthermore, it is easy to see that $\omega_{\hat{R}}(x)=M$, for every $x\in \hat{\mathbb{S}^1}$, where $M$ is a Cantor set $M=\hat{\mathbb{S}^1}\setminus\bigcup_{n\in\mathbb{Z}} \Int I_n$. 
Therefore, $M=S(\hat{\mathbb{S}^1},\hat{R})$ is a circumferential set, almost conjugated to irrational rotation $R$ via projection $\pi\colon \hat{\mathbb{S}^1}\to \mathbb{S}^1$ which collapses every $I_n$ to the respective $R^n(z)$.
In fact, any surjective graph map without periodic points (and hence any circumferential set) is obtained as the almost 1-1 extension of the irrational rotation on the circle. {Similarly to Denjoy map, circumferential sets can be viewed as blow ups of some backward invariant countable subset of the circle into intervals, where the maps $I_n
\to I_{n+1}$ are not necessarily invertible, and with the possible exception of finitely many points from that set which may come from some other graphs than intervals, thus allowing more complicated dynamics on them (see~\cite{Shao} for a more detailed insight).}
Therefore, a topological graph admits a map with circumferential set if and only if it contains a simple closed curve and, moreover, there can be only as many circumferential sets as there are disjoint simple closed curves in that graph~\cite{B1}.
}
    
If $\omega_f(x)$ is solenoidal set then there exists a sequence $\left(K_n\right)_{n\geq 1}$ of cycles of graphs in $\mathcal{C}(x)$ with strictly increasing periods $\left(p_n\right)_{n\geq 1}$ such that, for all $n\geq 1$, $K_{n+1}\subset K_n$ and $\omega_f(x)\subset\bigcap_{n\geq 1}K_n$.
	Furthermore, for all $n\geq 1$, $p_{n+1}$ is a multiple of $p_n$ and every connected component of $K_n$ contains the same number (equal to $p_{n+1}/p_n\geq 2$) of components of $K_{n+1}$. 

\subsection{$\alpha$-limit sets in graph maps}

	In~\cite{Forys} the authors presented almost complete characterization of $\alpha$-limit sets of a graph map.
	In particular, in case of a zero-entropy graph map the situation is clear: the family of $\alpha$-limit sets of backward branches coincides with the family of minimal sets.
	On the other hand, when a graph map has positive topological entropy and, hence, there is a basic set, then the $\alpha$-limit sets can also be some infinite $\omega$-limit sets contained in maximal basic sets, alongside with circumferential sets, periodic orbits and minimal subsets of solenoids as in the first case.

{In what follows, we will also need the following classical result by Blokh \cite{B1}.

\begin{theorem}\label{thm:blokh-almost}
Let $f \colon G \to G$ be a graph map and $K\subset G$ a finite union of subgraphs
such that $f (K) \subset K$. Suppose that $M= E(K, f )$ is infinite. Then $M$ is a perfect set, $f |_M$ is
transitive (i.e., $M$ is an orbit enclosing $\omega$-limit set) and, for every $z\in G$, if $\omega_f (z) \supset M$ then $\omega_f (z) = M$
(hence, $M$ is a maximal $\omega$-limit set). Moreover, there exists a transitive map $g \colon Y \to Y$ , where
$Y$ is a finite union of graphs, and a semi-conjugacy $\phi : K \to Y$ between $f|_K$ and $g$ which almost
conjugates $f |_M$ and $g$.
\end{theorem}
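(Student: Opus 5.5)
The plan is to derive the dynamical properties of $M$ from Theorem~\ref{E}. The almost conjugacy will then be built by collapsing every ``gap'' of $M$ in $K$ to a point, in the spirit of the Denjoy and truncated tent examples above.

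\textbf{Step 1: perfectness, transitivity and maximality.} Since $M=E(K,f)$ is infinite, \cite[Theorem 2]{B1} gives $E_S(K,f)=E(K,f)$. Theorem~\ref{E} then says that $M$ is an infinite maximal $\omega$-limit set. Write $M=\omega_f(x_0)$.
\begin{enumerate}[(a)]
\item Every point $y\in M$ satisfies $P_K(y)=K\supset M$. Since $\omega_f(x_0)=M$, the orbit of $x_0$ returns arbitrarily close to every point of $M$. From this I expect $f|_M$ to be transitive, and $M$ to have no isolated points, since an isolated point of an infinite $\omega$-limit set of a graph map would force a periodic structure.
\item If $\omega_f(z)\supset M$, then $\omega_f(z)=M$ by maximality among $\omega$-limit sets. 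That is exactly what Theorem~\ref{E} provides.
\end{enumerate}

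\textbf{Step 2: the decomposition.} The key observation concerns points $x\in K\setminus M$. Such a point has a relatively open connected neighbourhood $U\subset K$ with $\overline{\bigcup_i f^i(U)}\neq K$. Suppose some $f^n(U)$ contained a relative neighbourhood of a point $y\in M$. Since $P_K(y)=K$, the orbit of $U$ would then be dense in $K$, which is impossible. Hence no iterate of such a $U$ contains a point of $M$ in its relative interior.

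Now decompose $K$ into classes. Each class is either a single point of $M$ not lying in any nondegenerate such continuum, or a maximal subcontinuum $C\subset K$ with $\Int_K C\cap M=\emptyset$. Let $\phi\colon K\to Y$ be the quotient map. By the observation above, $f$ maps each class into a class, because $f(C)$ is a continuum whose interior misses $M$. Therefore $f$ descends to a continuous map $g\colon Y\to Y$ with $\phi\circ f=g\circ\phi$.

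\textbf{Step 3: properties of $\phi$ and $g$.}
\begin{enumerate}[(a)]
\item The fibres of $\phi$ are connected by construction.
\item Each nondegenerate class $C$ meets $M$ only in $\partial C$. Every class meets $M$, because $M$ is dense in $K$ modulo gaps: any complementary component of $M$ lies in some class, and its endpoints lie in $M$. This gives $\phi(M)=Y$ and $\phi^{-1}(y)\cap M=\partial\phi^{-1}(y)$.
\item The uniform bound on $\#(\phi^{-1}(y)\cap M)$ follows, as remarked after the definition, from the bounded number of boundary points of subgraphs.
\item $g$ is transitive, since $g\circ\phi|_M=\phi\circ f|_M$ and $f|_M$ is transitive with $\phi(M)=Y$.
\end{enumerate}

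\textbf{Main obstacle.} The hardest step is showing that the decomposition is upper semicontinuous and has only finitely many classes meeting $\V(K)$. This is what makes $Y$ a Hausdorff space that is again a finite union of graphs, with $\phi$ continuous. My first attempt would use two facts:
\begin{enumerate}[(a)]
\item The nondegenerate classes are pairwise disjoint subcontinua. Only finitely many of them can contain branch points or loops, and the rest are arcs.
\item A limit of classes is contained in a class. This holds because a Hausdorff limit of continua whose interiors miss $M$ again has interior missing $M$, by closedness of $M$ and the fact that $M$ is perfect.
\end{enumerate}
Collapsing countably many disjoint arcs, together with finitely many subgraphs, in a graph then yields a graph. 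This is exactly the Denjoy-type picture.
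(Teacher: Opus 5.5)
The paper does not prove this statement; it is quoted from Blokh \cite{B1}. Your argument therefore has to stand on its own, and as written it has two genuine gaps.

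\textbf{First gap: Step 1(a).} Theorem~\ref{E} only tells you that $M$ is an infinite maximal $\omega$-limit set, and your heuristic for perfectness and transitivity does not work. The orbit of $x_0$ need not lie in $M$, so its accumulating on every point of $M$ produces no dense orbit \emph{inside} $M$. Moreover, infinite $\omega$-limit sets of graph maps can have isolated points and fail to be transitive. For the full tent map, the fixed point $0$ together with its homoclinic orbit $\{1\}\cup\{2^{-k}\colon k\geq 1\}$ is such an $\omega$-limit set. Both properties have to be extracted from $P_K(y)=K$ for all $y\in M$. You also rely on them later: perfectness is what makes your maximal continua pairwise disjoint with boundaries in $M$, and transitivity of $g$ is deduced from that of $f|_M$.

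\textbf{Second, more serious gap: invariance in Step 2.} The claim that $f$ maps classes into classes does not follow from your observation. That observation concerns only a \emph{small} neighbourhood $U$ of a single point of $K\setminus M$. A class $C$ is a large continuum, and $f(C)$ is the union of the images of many such neighbourhoods. So a point $m\in M$ can lie in $\Int f(C)$ while lying on the boundary of every piece; think of two halves of a gap folded onto the two sides of $m$.

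Ruling this out is exactly where the infiniteness of $M$ enters, via $E_S(K,f)=E(K,f)$: every $m\in M$ has a side $T$ with $P^T_K(m)=K$. A correct argument runs as follows.
\begin{enumerate}[(1)]
\item Suppose $\Int f(C)\cap M\neq\emptyset$. By perfectness, choose $m'\in M\cap\Int f(C)$ outside the finite set $f(\partial C)$, and let $T'$ be a good side of $m'$.
\item Cover $C$ by finitely many small subcontinua and apply boundary bumping on a short $T'$-sided arc at $m'$. Passing to a limit, you get $w\in C$ with $f(w)=m'$ such that every neighbourhood of $w$ is mapped onto a set containing a $T'$-sided neighbourhood of $m'$.
\item Then $P_K(w)=K$, so $w\in M\cap C=\partial C$. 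Hence $m'\in f(\partial C)$, a contradiction.
\end{enumerate}
Your proposal never uses the one-sided property, so this key step is missing.

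By comparison, what you call the main obstacle is routine. The nondegenerate classes are pairwise disjoint subcontinua of a graph of finite length, so all but finitely many are arcs inside edges and their diameters tend to $0$. That null-family property, rather than closedness and perfectness of $M$, is what gives upper semicontinuity.
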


\begin{remark}\label{rem:cir-rot}
Let $M=E(K,f)$ be a circumferential set. Fix a component $C$ of $K$  with period $r$. Then
 an almost conjugacy provided by Theorem~\ref{thm:blokh-almost} can in practice be regarded as
 $\phi\colon (C,f^r)\to (\mathbb{S}^1,R)$, where $R$ is an irrational circle rotation (see \cite{Shao} for more comments and related results).
\end{remark}
}

\subsection{Hyperspaces}

	A \emph{hyperspace} of a metric space $(X,\dist)$ is a specified family of nonempty closed subsets of $X$. 
	The hyperspaces which will be of our interest are $2^X$, the hyperspace of all nonempty compact subsets of $X$ and $C(X)$ which is the hyperspace of all continua, i.e. the connected elements of $2^X$.
	For a point $x\in X$ and a nonempty subset $A\subset X$ we define the \emph{distance from the point $x$ to the set $A$} as $\dist(x,A)=\inf\{\dist(x,y)\colon y\in A\}$
	and for each nonempty set $A\subset X$ and each $\epsilon>0$, we define the \emph{$\epsilon$-neighborhood of the set $A$} as $N(A,\epsilon)=\{x\in X\colon \dist(x,A)<\epsilon\}$.
	We can now endow $2^X$ with a function $\dist_H\colon 2^X\times 2^X\to\mathbb{R}$, defined for each pair $A,B\in 2^X$ as:
	\[\dist_H(A,B)=\inf\{\epsilon\geq 0\colon {A\subset N(B,\epsilon)}\text{ and }B\subset N(A,\epsilon)\}.\] 
	It is well known that $\dist_H$ is a metric, called \emph{Hausdorff metric}, and that $(2^X,\dist_H)$ is a compact metric space (e.g. see \cite{Nadler}). 
	It is also well know that $C(X)$ is a compact subset of $2^X$. 
	The topology generated by $\dist_H$ does not depend on the metric $\dist$ but only on the topology that $\dist$ generates on $X$.

	If $f\colon X\to X$ is a continuous map, then it naturally extends to $\bar{f}\colon 2^X\to 2^X$ via $\bar{f}(A)=f(A)$.
	It is well known that $\bar{f}$ defined this way is a continuous map on $2^X.$
	Obviously, $C(X)$ is $\bar{f}$-invariant subset of $2^X$ and we may denote $\tilde{f}=\bar{f}\vert_{C(X)}$.
	This way we obtain two \emph{induced systems} $(2^X,\bar{f})$ and $(C(X),\tilde{f})$ generated by $(X,f)$.
	Observe that $(X,f)$ may be considered a subsystem (that is, a closed invariant subset) of $(C(X),\tilde{f})$ by identifying $x\in X$ with $\{x\}\in C(X)$,
	and thus also a subsystem of $(2^X,\bar f)$. 
	{Note that $\mathcal{O}_f(A)\neq \mathcal{O}_{\tilde f}(A)$, because the first orbit is a subset of $X$, while the second one is a set of subsets of $X$.}

\subsection{Recurrence in the induced system of continua of topological graphs}

 Let $M=E(K,f)$ be a circumferential set. Fix a component $C$ of $K$  with period $r$ and let
 $\phi\colon (C,f^r)\to (\mathbb{S}^1,R)$ be an almost conjugacy with an irrational rotation $R$ {(recall~\cite[Theorem 3]{B1})}.
 Denote by $\mathcal{R}_C\subset C(G)$ the set consisting of $C$ and subcontinua $A$ of $C$ 
 which satisfy the following two conditions:
\begin{enumerate}
 	\item $A$ has exactly two boundary points $x,y$ in the relative topology of $C$ and $x,y\in M$
 	\item $\phi^{-1}(\phi(x))\subset A$ and $\vert\phi^{-1}(\phi(y))\cap A\vert=1$ or vice-versa.
\end{enumerate}
Denote by $\mathcal R$ the union of all $\mathcal{R}_C$ over all circumferential sets $M$ and related components $C$.
	The authors in~\cite{Jelic} recently proved the following.
\begin{theorem}\label{thm:rec-cont}
Let $G$ be a topological graph and let $f$ be a graph map on $G$.
	Then 
\begin{equation}
\Rec(\tilde{f})=\Per(\tilde{f})\cup \{\{x\}\colon x\in \Rec(f)\}\cup \mathcal{R}.\label{eq:recAdescription}
\end{equation}
\end{theorem}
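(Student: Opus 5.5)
The plan is to prove the two inclusions in \eqref{eq:recAdescription} separately; the inclusion ``$\supseteq$'' is essentially a verification and ``$\subseteq$'' carries the content.

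\emph{The inclusion $\supseteq$.} Periodic points of $\tilde f$ are trivially recurrent. If $x\in\Rec(f)$ and $f^{n_k}(x)\to x$, then $\dist_H(\{f^{n_k}(x)\},\{x\})=\dist(f^{n_k}(x),x)\to 0$, so $\{x\}\in\Rec(\tilde f)$. The set $C$ itself is periodic. For $A\in\mathcal R_C\setminus\{C\}$ we use the almost conjugacy $\phi\colon(C,f^r)\to(\mathbb S^1,R)$ of Remark~\ref{rem:cir-rot}.
\begin{enumerate}
\item The image $\phi(A)$ is a closed arc $[\phi(x),\phi(y)]$. Choose $n_k$ with $R^{n_k}\to\mathrm{id}$, approaching the identity from the side that moves $\phi(y)$ back into the arc.
\item The boundary points $x,y$ lie in $M$. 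The fibre over $\phi(x)$ is fully contained in $A$, while the fibre over $\phi(y)$ meets $A$ only in the point $y$.
\item From these two facts we check that $f^{rn_k}(A)$ converges to $A$ in $\dist_H$. Fibres over points of the orbit of the rotation are non-degenerate only on a countable set, and their total length is summable, as for the Denjoy map. Hence the only possible defect of convergence sits at the two ends of $A$, and the one-sided condition in the second item is exactly what removes it.
\end{enumerate}

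\emph{The inclusion $\subseteq$.} Take $A\in\Rec(\tilde f)$. If $A$ is degenerate, say $A=\{x\}$, then $x\in\Rec(f)$. So assume $A$ is nondegenerate and not periodic, with $f^{n_k}(A)\to A$.

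First we show that the lengths of $f^n(A)$ do not tend to $0$. Otherwise $f^{n_k}(A)$ would shrink to a point, contradicting $f^{n_k}(A)\to A$. Since $A$ contains an open arc $V$ and $f^{n_k}(A)$ is close to $A$, the arc $V$ is non-wandering. By \cite{Forys} the prolongation set of an interior point of $A$ is then a periodic orbit, a cycle of graphs $K$, or a solenoidal set. We treat each case.
\begin{itemize}
\item \emph{Periodic orbit.} Then $f^n(A)$ shrinks to that orbit, which is impossible by the length bound.
\item \emph{Solenoidal set.} Then $A$ sits inside components of the cycles $K_n$ with periods $p_n\to\infty$. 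We argue, in the spirit of Matviichuk's asymptotic degeneracy, that the iterates of a non-periodic continuum inside nested components of $K_n$ have lengths tending to $0$. This again contradicts recurrence.
\item \emph{Cycle of graphs $K$ with a basic set $D(K)$.} Here $f|_K$ has periodic points and is almost conjugate to a transitive graph map $g$ with positive entropy (Theorem~\ref{thm:blokh-almost}). We use that $g$-images of arcs eventually cover a component. It follows that $f^n(A)$ eventually contains a whole component $C$ of $K$, hence $f^{n_k}(A)\supseteq C$ for large $k$. Recurrence then forces $A\supseteq C$, and invariance of the cycle forces $A=C$. This contradicts the assumption that $A$ is not periodic.
\item \emph{Cycle of graphs $K$ with a circumferential set $S(K)$.} This is the remaining case.
\end{itemize}

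\emph{The circumferential case.} Here $A\subset C$ for some component $C$, up to passing to an iterate. We project by $\phi$, so that $\phi(A)$ is an arc recurrent under the irrational rotation $R$. The points of $C\setminus M$ lie in interiors of fibres, and these fibres are wandering. Recurrence of $A$ therefore forces its boundary points into $M$. It also forces $A$ to have exactly two boundary points, unless $A=C$. Finally, we compare the possible limits of $f^{rn_k}(A)$ from either side. Limits from one side produce full fibres and limits from the other side produce single points. This shows that recurrence is compatible only with the asymmetric fibre condition defining $\mathcal R_C$.

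\emph{Expected obstacles.} I expect the main obstacle to be the solenoidal case, that is, showing that a nondegenerate, non-periodic continuum trapped in a solenoid cannot be recurrent. I would first try to show that $A$ must intersect a wandering interval of the solenoid's construction, so that $f^n(A)$ becomes asymptotically degenerate. The second delicate point is the careful fibre bookkeeping in the circumferential case.
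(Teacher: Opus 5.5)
Your argument for $\supseteq$ is fine in outline. Choosing $R^{n_k}\to\mathrm{id}$ from the side that pushes $\phi(y)$ into the arc works, together with $f^{rn}(\phi^{-1}(J))=\phi^{-1}(R^nJ)$ for open arcs $J$ (which uses $f^r(C)=C$).

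The inclusion $\subseteq$ has a genuine gap. The paper does not reprove this theorem; it quotes it from \cite{Jelic}. You split into cases by the prolongation set of a \emph{single} interior point $x$ of $A$. In every case you then conclude something about \emph{all} of $A$ that does not follow.
\begin{itemize}
\item \emph{Periodic orbit.} $P(x)$ being a periodic orbit only controls images of small neighbourhoods of $x$. Your inference never uses that $A$ is non-periodic, and it fails for an increasing interval map whose only fixed points are $0$, $1/2$, $1$, with $1/2$ attracting: then $P(1/2)=\{1/2\}$, yet $A=[0,1]$ is fixed.
\item \emph{Solenoidal set.} The prolongation set constrains only small neighbourhoods of $x$. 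So ``$A$ sits inside components of the cycles $K_n$'' is unjustified.
\item \emph{Basic set.} Grant $A\supseteq C$; this itself needs care, since a mixing graph map need not be exact. Even so, ``invariance of the cycle forces $A=C$'' is false: $A$ may be $C$ together with arcs reaching other periodic orbits or other cycles of graphs. What you must prove is that such a composite $A$ is periodic, and that is the heart of the theorem.
\item \emph{Circumferential set.} You get $A\subset C$ after iterating only if every point of $A$ has $\omega$-limit set $S(K)$, via Lemma~\ref{lem:circ}. Otherwise $A$ contains $C$ and more, and you are back at the same unproved step.
\end{itemize}
There is a further problem at the start. Recurrence gives $f^{n_k}(A)\cap V\neq\emptyset$, not $f^m(V)\cap V\neq\emptyset$. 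Recurrent continua do contain wandering arcs, for instance gaps of $M$ inside members of $\mathcal R_C$. So the trichotomy of \cite{Forys} is not available at an arbitrary interior point.

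What is missing is a global argument that uses all points of $A$ at once. Recurrence gives $\omega_f(x)\cap A\neq\emptyset$ for every $x\in A$. The results the paper imports from \cite{Jelic} (Lemmas~\ref{lem:contains_circumferential}, \ref{lem:solenoid-components}, \ref{lem:basicset-sub} and Corollary~\ref{cor:basicset-sub}) convert this into a structural statement. Namely, $A$ contains whole components of the cycles of graphs attached to the solenoids, the basic sets met in $\Int A$, and the circumferential sets it meets, unless all of $A$ is attracted to one circumferential set. Only in that last situation does your fibre analysis leading to $\mathcal R_C$ apply. On top of this you still need a finiteness argument showing that a recurrent continuum spanned by finitely many such periodic pieces and periodic points is itself periodic.

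Lemma~\ref{lem:solenoid-components} also disposes of the obstacle you anticipated. Suppose $A$ contains a component $C$ of a cycle $K\supset\omega$ of period $p$. Then $f^p(\omega\cap C)\subset\omega\cap C$, so $\omega\cap C$ meets at least two components of any $K_n$ whose period exceeds $p$. Hence $A$ cannot lie in a single component of every $K_n$.
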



\section{The proof of Theorem~\ref{thm:main}}\label{sec:proof}	
We will be using the following classical result (see \cite{Furst}).
It will help us to synchronize dynamics of some points.
\begin{theorem}[Auslander-Ellis]\label{thm:auslander}
	In a dynamical system on a compact metric space, every
point is proximal to a uniformly recurrent point in its orbit closure.	
\end{theorem}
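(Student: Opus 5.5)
The plan is to prove the Auslander--Ellis theorem through the Ellis enveloping semigroup. Let $(X,f)$ be a dynamical system on a compact metric space and let $E=E(X,f)$ be the closure of $\{f^n\colon n\geq 0\}$ in $X^X$ with the product (pointwise) topology. By Tychonoff, $E$ is compact. Under composition it is a semigroup, and for every fixed $p\in E$ the map $q\mapsto q\circ p$ is continuous. The key algebraic input is the Ellis--Numakura lemma: \emph{every nonempty compact semigroup with this one-sided continuity contains an idempotent}. I would prove it by taking, via Zorn's lemma, a minimal nonempty closed subsemigroup $S$ and fixing $u\in S$. The set $Su$ is closed, since it is the image of compact $S$ under a continuous map, and it is a subsemigroup, so $Su=S$ by minimality. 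Then $\{s\in S\colon su=u\}$ is a nonempty closed subsemigroup, hence equal to $S$, and in particular $uu=u$.

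Next I take a minimal left ideal $L\subset E$, that is, a closed set with $EL\subset L$. It exists by Zorn's lemma and compactness, since $Ep$ is a closed left ideal for every $p\in E$. Applying the lemma to $L$ gives an idempotent $u\in L$. Fix $x\in X$ and set $y=u(x)$.

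\begin{enumerate}[(1)]
\item Since $u$ is a pointwise limit of iterates of $f$, the point $y$ lies in $\overline{\mathcal{O}_f(x)}$.
\item The set $L(x)=\{p(x)\colon p\in L\}$ is closed and $f$-invariant, because $f\in E$ and $fL\subset L$. It is minimal: if $Z\subset L(x)$ is closed, invariant and nonempty, pick $z=p(x)\in Z$. Then $Ez\subset Z$, and $Ep\subset L$ is a closed left ideal, so $Ep=L$ by minimality, giving $L(x)=Ep(x)\subset Z$.
\item Since $y\in L(x)$ and $L(x)$ is minimal, $y$ is uniformly recurrent. This uses the standard fact that every point of a minimal set is uniformly recurrent: cover the minimal set by finitely many backward translates of a neighbourhood of $y$.
\end{enumerate}

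For proximality, observe that $u(y)=uu(x)=u(x)$, so $u(x)=u(y)$. Choose a net $f^{n_\alpha}\to u$ in $E$. Then $f^{n_\alpha}(x)$ and $f^{n_\alpha}(y)$ converge to the same point, so for each $k$ there is $n$ with $\dist(f^{n}(x),f^{n}(y))<1/k$. To obtain a strictly increasing sequence I need these $n$ to be unbounded, which requires a short case split.

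\begin{itemize}
\item If the $n_\alpha$ are eventually bounded, then $u=f^m$ for some $m$. Idempotency gives $f^{2m}=f^m$, so $y=f^m(x)$ is periodic. Moreover $f^{km}(x)=f^{km}(y)$ for all $k\geq 1$, which gives proximality along the sequence $(km)_{k\geq 1}$.
\item Otherwise the $n_\alpha$ can be chosen to exceed any prescribed bound, and a diagonal choice produces the increasing sequence $(n_k)$ with $\dist(f^{n_k}(x),f^{n_k}(y))\to 0$.
\end{itemize}

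The main obstacle is the algebraic step, namely the existence of an idempotent inside a minimal left ideal together with the one-sided continuity of $E$. I would handle it carefully via Zorn's lemma as above, checking that the intersection of a chain of closed subsemigroups is a nonempty closed subsemigroup, which follows from compactness. The remaining steps are routine topology.
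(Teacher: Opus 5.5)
The paper does not prove this statement. It quotes it as a classical result from Furstenberg's book and uses it only through Corollary~\ref{cor:proximal}, so there is no argument in the paper to compare yours with. Your proposal is the standard Ellis enveloping-semigroup proof, and it is correct. The Ellis--Numakura lemma, the existence of a closed minimal left ideal $L$, the minimality of $L(x)$, and the identity $u(y)=u(u(x))=u(x)=y$ are all handled properly.

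There is one small slip in your final case split. Suppose the net is eventually bounded and the limit is $u=f^0=\mathrm{id}$. This can happen, for instance, for an irrational rotation when the constant net is chosen. Then $y=x$ need not be periodic, and $(km)_{k\geq 1}$ is not increasing. Since $y=x$ in that case, proximality is trivial, and uniform recurrence was already obtained in your step (3). You can avoid the case split altogether:
\begin{itemize}
\item if $f^n(x)=f^n(y)$ for some $n$, the equality persists for all larger $n$;
\item otherwise every distance $\dist(f^n(x),f^n(y))$ is positive, so indices giving arbitrarily small distances must be unbounded.
\end{itemize}

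Your route also gives something beyond the bare statement. The single net $f^{n_\alpha}\to u$ yields both $f^{n_\alpha}(x)\to u(x)=y$ and $f^{n_\alpha}(y)\to u(y)=y$. This is exactly the stronger form the paper actually uses in Corollary~\ref{cor:proximal}: one sequence along which $x$ converges to $y$ and $y$ returns to itself. Deducing that form from the theorem as stated needs a short extra step. For example, $\omega_{f\times f}(x,y)$ contains a diagonal point $(w,w)$ with $w$ in the minimal set $\overline{\mathcal{O}(y)}$. Hence it contains the whole diagonal over that set, and in particular $(y,y)$.
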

	
An immediate consequence of Auslander-Ellis theorem is the following fact, which will be of great utility in our arguments.
\begin{corollary}\label{cor:proximal}
Let $(X,f)$ be a dynamical system on a compact metric space $X$.
	For any $x\in X$ there exists $y\in\Rec(f)$ and an increasing sequence $\{n_k\}_{k\geq 0}$ of nonnegative integers such that \[(\forall\epsilon>0)(\exists k_0\geq 0)(\forall k\geq k_0) \dist(f^{n_k}(x),y)<\epsilon \text{ and } \dist(f^{n_k}(y),y)<\epsilon.\]
\end{corollary}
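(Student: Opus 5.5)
We derive the corollary directly from Theorem~\ref{thm:auslander}. Fix $x\in X$. By Auslander--Ellis there is a uniformly recurrent point $y\in\overline{\mathcal{O}(x)}$ such that $x$ and $y$ are proximal. Uniformly recurrent points are recurrent, so $y\in\Rec(f)$. It remains to build a single increasing sequence $(n_k)$ along which both $f^{n_k}(x)\to y$ and $f^{n_k}(y)\to y$. The two conditions together force $\dist(f^{n_k}(x),f^{n_k}(y))\to0$, so the sequence must be chosen as a subset of the proximality times.

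\textbf{Set-up.} For $\epsilon>0$ put
\[P_\epsilon=\{n\colon \dist(f^n(x),f^n(y))<\epsilon\},\qquad R_\epsilon=\{n\colon \dist(f^n(y),y)<\epsilon\}.\]
The key claim is that $P_\epsilon\cap R_\epsilon$ is infinite for every $\epsilon>0$. Once the claim holds, we choose $n_k\in P_{1/k}\cap R_{1/k}$ with $n_k>n_{k-1}$. The triangle inequality then gives $\dist(f^{n_k}(x),y)<2/k$ and $\dist(f^{n_k}(y),y)<1/k$, which is exactly the statement.

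\textbf{Proving the claim.} This is the main step. Proximality alone only shows that $P_\epsilon$ is infinite, and an infinite set need not meet the syndetic set $R_\epsilon$. We handle this with a uniform-continuity argument.

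Since $y$ is uniformly recurrent, $R_{\epsilon/2}$ is syndetic; let $L$ be a bound on its gaps. By uniform continuity of $f,\dots,f^{L}$ on the compact space $X$, choose $\delta>0$ such that $\dist(a,b)<\delta$ implies $\dist(f^j(a),f^j(b))<\epsilon/2$ for all $0\le j\le L$.

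Now take any $m\in P_{\min(\delta,\epsilon)}$; there are infinitely many such $m$ by proximality. Pick $j\in[0,L]$ with $m+j\in R_{\epsilon/2}$. Then
\[\dist(f^{m+j}(x),f^{m+j}(y))<\epsilon/2<\epsilon,\]
so $m+j\in P_\epsilon\cap R_{\epsilon/2}\subset P_\epsilon\cap R_\epsilon$. Letting $m\to\infty$ shows that $P_\epsilon\cap R_\epsilon$ is infinite, which proves the claim and the corollary.
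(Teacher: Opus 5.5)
Your proof is correct and follows the paper's route. The paper gives no argument beyond calling the corollary an immediate consequence of the Auslander--Ellis theorem (Theorem~\ref{thm:auslander}). Your syndeticity-plus-uniform-continuity step supplies the synchronization of proximality times with return times of $y$ that the paper leaves implicit; it uses $0\in R_{\epsilon/2}$, which is what guarantees that every window $[m,m+L]$ meets $R_{\epsilon/2}$.
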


Although this is not explicitly stated in the assumptions, for each result contained
in this section we assume that we are given a dynamical system $(G,f)$ on some arbitrary
topological graph $G$. Let us also recall that we refer to Remark~\ref{rem:ambient}, which provides
standing assumption on the metric on $G$ for the whole section.

The following observation is straightforward.
\begin{lemma}\label{lem:iterates}
	Let $A$ be a continuum. 
If there are $k\geq 0$ and $n\geq 1$ such that $f^k(A)$ is asymptotically periodic under ${\tilde{f}}^n$ then $A$ is asymptotically periodic under $\tilde{f}$.
\end{lemma}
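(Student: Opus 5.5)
The plan is to unwind the definition of asymptotic periodicity for the point $B:=f^k(A)$ of $C(G)$, first under $\tilde f^n$ and then under $\tilde f$, and to use uniform continuity of $\tilde f$ on the compact space $C(G)$. By hypothesis, $\omega_{\tilde f^n}(B)$ is a periodic orbit of $\tilde f^n$, say $\{P,\tilde f^n(P),\dots,\tilde f^{n(q-1)}(P)\}$, where $P$ has period $q$ under $\tilde f^n$. Equivalently, the sequence $\tilde f^{nm}(B)$ is asymptotic to this orbit: there is an index shift $s$ such that $\dist_H\bigl(\tilde f^{nm}(B),\tilde f^{n(m+s)}(P)\bigr)\to 0$ as $m\to\infty$. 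I would first record this standard equivalence. In a compact space, a finite $\omega$-limit set that is a single periodic orbit forces the trajectory to shadow that orbit asymptotically. Indeed, the $\omega$-limit set is a finite set of isolated points, so eventually the iterates lie in small disjoint neighbourhoods of the orbit points, and by continuity they visit those neighbourhoods in the cyclic order of the orbit.

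Next, $P$ is a periodic point of $\tilde f$ with period dividing $nq$. Its $\tilde f$-orbit $\mathcal P=\{P,\tilde f(P),\dots,\tilde f^{nq-1}(P)\}$ is a finite periodic orbit in $C(G)$. Write any $j\ge 0$ as $j=nm+r$ with $0\le r<n$. By uniform continuity of the finitely many maps $\tilde f^r$, $r<n$, on $C(G)$, we get $\dist_H\bigl(\tilde f^{j}(B),\tilde f^{r+n(m+s)}(P)\bigr)\to0$ as $j\to\infty$. Hence every limit point of the $\tilde f$-trajectory of $B$ lies in $\mathcal P$, and every point of $\mathcal P$ is attained as such a limit. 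Therefore $\omega_{\tilde f}(B)=\mathcal P$, which is a periodic orbit.

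Finally, $\omega_{\tilde f}(A)=\omega_{\tilde f}(\tilde f^k(A))$, since the $\omega$-limit set is insensitive to discarding finitely many initial terms of the trajectory. Since $\tilde f^k(A)=f^k(A)=B$, the continuum $A$ is asymptotically periodic under $\tilde f$.

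The only step needing any care is the first one, the equivalence between ``$\omega$-limit set is a periodic orbit'' and asymptotic tracking of that orbit. It is standard, and I expect no real obstacle.
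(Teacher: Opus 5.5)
Your argument is correct and is exactly the routine verification the paper leaves to the reader; the paper states this lemma as a straightforward observation and gives no proof. The tracking step is valid but can be skipped: compactness alone gives $\omega_{\tilde f}(B)=\bigcup_{r=0}^{n-1}\tilde f^{r}\bigl(\omega_{\tilde f^{n}}(B)\bigr)$, which is exactly the $\tilde f$-orbit of $P$, and $\omega_{\tilde f}(A)=\omega_{\tilde f}(B)$.
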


{ We will also need the following lemma. Details of the proof are simple and left to the reader (cf. \cite[Lemma~3.2]{Jelic}).}

\begin{lemma}\label{lem:circ}
	Let $A\in\C(G)$ and let $M=E(K,f)$ be a circumferential set such that $\omega_f(x)=M$ for all $x\in A$.
	Then there is some $k\geq 0$ such that $f^k(A)\subset K$.
\end{lemma}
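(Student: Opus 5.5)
The idea is to use only compactness of $A$ and the fact that $K$ is forward invariant with finite boundary; connectedness of $A$ is not needed. For $n\geq 0$ put
\[A_n=\{x\in A\colon f^n(x)\in K\}.\]
Each $A_n$ is closed in $A$, since $K$ is closed and $f^n$ is continuous. Since $K$ is a cycle of graphs, $f(K)\subset K$, so $A_n\subset A_{n+1}$. The goal is to show that $A_n=A$ for some $n$; then $f^n(A)\subset K$ and we can take $k=n$.

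\textbf{Step 1: every point of $A$ eventually enters the interior of $K$.} Let $\Int K$ denote the interior of $K$ in $G$. As a finite union of subgraphs, $K$ has finite boundary $\partial K$ in $G$. By Theorem~\ref{thm:blokh-almost}, $M=E(K,f)$ is infinite (indeed perfect), and $M\subset K$. Hence $M\setminus\partial K\subset\Int K$ is nonempty. Take $z\in A$ and pick $y\in M\setminus \partial K$. Since $\omega_f(z)=M\ni y$ and $\Int K$ is an open neighbourhood of $y$, there is $m$ with $f^m(z)\in \Int K$. By continuity there is a relative neighbourhood $U_z$ of $z$ in $A$ with $f^m(U_z)\subset \Int K\subset K$, i.e.\ $U_z\subset A_m$. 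Monotonicity then gives $U_z\subset A_n$ for every $n\geq m$.

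\textbf{Step 2: compactness.} Suppose, for a contradiction, that $A_n\neq A$ for every $n$. Choose $z_n\in A\setminus A_n$. By compactness of $A$, a subsequence $z_{n_j}$ converges to some $z\in A$. By Step 1 there are $m$ and a neighbourhood $U_z$ with $U_z\subset A_n$ for all $n\geq m$. For large $j$ we have $z_{n_j}\in U_z$ and $n_j\geq m$, so $z_{n_j}\in A_{n_j}$, which contradicts the choice of $z_{n_j}$. Therefore some $A_k=A$, and $f^k(A)\subset K$.

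\textbf{Main obstacle.} The one delicate point is in Step 1: we must land in the \emph{interior} of $K$, not just in $K$, to get an open neighbourhood of $z$ inside some $A_m$. Landing merely in $K$ would only give the closed sets $A_n$, and an increasing sequence of closed sets covering a compact space need not stabilize. This is why I use that $M$ is infinite while $\partial K$ is finite, so that $\omega_f(z)=M$ forces a visit to $\Int K$. No properties of the circumferential set beyond $M\subset K$, $f(K)\subset K$ and $|M|=\infty$ are used.
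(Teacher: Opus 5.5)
Your proof is correct and complete: since $M$ is infinite while $\partial K$ is finite, every $z\in A$ has an iterate in $\Int K$. That gives a neighbourhood of $z$ in $A$ which is mapped into the forward-invariant set $K$ from some iterate on, and compactness of $A$ then yields a uniform $k$. The paper does not write out a proof (it leaves the details to the reader, cf.\ \cite[Lemma~3.2]{Jelic}), and your argument is the routine one that remark points to; as you observe, connectedness of $A$ is not needed.
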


\begin{lemma}\label{lem:per_pts}
Let $A\in\C(G)$ be not wandering for $f$, that is, let there be some $0\leq r<t$ such that $f^r(A)\cap f^t(A)\neq\emptyset$.
	Then at least one of the following properties holds.
	\begin{enumerate}[(i)]
\item\label{lem:per_pts:c1} There exist some $m\geq 0$ and some periodic point $y$ such that $y\in f^m(A)$.
\item\label{lem:per_pts:c2} There exists some $x\in A$ such that $\omega_f(x)$ is circumferential.
\item\label{lem:per_pts:c3} There exists some finite set $B$, which is a union of periodic orbits, such that $\omega_f(x)\subset B$ for all $x\in A$. 	
	\end{enumerate}
\end{lemma}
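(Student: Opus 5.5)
Throughout, write $B_0=f^r(A)$, $p=t-r\geq 1$, $g=f^p$ and $B_n=g^n(B_0)=f^{r+np}(A)$. By hypothesis $B_0\cap g(B_0)\neq\emptyset$, so applying $g^n$ gives $B_n\cap B_{n+1}\neq\emptyset$ for every $n$. The plan is to work inside the $g$-invariant connected set
\[L=\bigcup_{n\geq 0}B_n\]
and its closure, and to recover the three alternatives for $f$ from the dynamics of $g$ there.

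\textbf{Step 1 (setup).} $L$ is connected, being a chain of continua in which consecutive members meet, and $g(L)\subset L$. Hence $\overline{L}$ is a $g$-invariant subcontinuum of $G$. If $\overline{L}$ is a point, it is a fixed point of $g$, hence periodic for $f$, and it lies in $f^r(A)$, which gives \eqref{lem:per_pts:c1}. So assume $\overline{L}$ is a subgraph. From now on also assume that no $f^m(A)$ contains a periodic point of $f$; in particular $L$ contains no periodic point of $f$.

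\textbf{Step 2 (a structural fact about connected sets in graphs).} I will use the standard fact that a connected subset of a graph differs from its closure by at most finitely many points. Thus $F=\overline{L}\setminus L$ is finite, and every periodic point of $f$ lying in $\overline{L}$ belongs to $F$.

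\textbf{Step 3 (the trichotomy).} For $x\in A$ we have $\omega_g(f^r(x))\subset\overline{L}$ and
\[\omega_f(x)=\bigcup_{i<p}f^i\bigl(\omega_g(f^r(x))\bigr),\]
so it is enough to classify $g$-limit sets inside $\overline{L}$ and then push them forward by $f$.
\begin{enumerate}[(a)]
\item If some $\omega_g(f^r x)$ is circumferential for $g$, then $\omega_f(x)$ is circumferential for $f$. This gives \eqref{lem:per_pts:c2}. The same holds if $\overline{L}$ carries no periodic point of $g$ at all, since then every $\omega$-limit set of $g|_{\overline{L}}$ is circumferential, by the fact that periodic-point-free graph maps are almost $1$-$1$ extensions of irrational rotations.
\item If some $\omega_g(f^r x)$ is contained in a basic set, then I use density of periodic points in basic sets. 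Since the basic set is infinite, it produces infinitely many periodic points of $g$ inside $\overline{L}$. Only finitely many of them can lie in $F$, so one lies in $L$, contradicting Step 1.
\item If some $\omega_g(f^r x)$ is solenoidal, I argue similarly with $Q_{\min}=Q\cap\overline{\Per(f)}$, which is perfect and hence infinite.
\item If every $\omega_g(f^r x)$ is a periodic orbit, each such orbit lies in $\overline{L}\cap\Per(g)\subset F$. Taking $B$ to be the union of the $f$-orbits of the periodic points in $F$ gives a finite union of periodic orbits with $\omega_f(x)\subset B$ for all $x\in A$. This is \eqref{lem:per_pts:c3}.
\end{enumerate}

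\textbf{Main obstacle.} The hard step is the solenoidal case (c). Points of $Q_{\min}$ are only limits of periodic points, and those periodic points might lie outside $\overline{L}$. To handle this I would use the generating cycles of graphs $K_n\supset Q$. Each $K_n$ contains periodic points. For large $n$, the components of $K_n$ meeting $\omega_g(f^r x)\subset\overline{L}$ should lie in $\overline{L}$ up to finitely many exceptional components near $F$, because $\overline{L}$ is $g$-invariant and $\overline{L}$ meets $K_n$ in a $g^{p_n}$-invariant way. This would yield periodic points in $L$, again a contradiction. If that fails, the fallback is Corollary~\ref{cor:proximal}. It provides a recurrent point $y$ proximal to $f^r(x)$, and one can then try to show that some $B_n$ must swallow a periodic component of $K_n$.
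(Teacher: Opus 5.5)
Your skeleton matches the paper's proof. The paper works with the $f$-invariant set $\overline{\bigcup_n f^n(A)}$, which has at most $t$ components, where you use the connected $g$-invariant $\overline{L}$. Both arguments use the fact that the closure adds only finitely many points, so that if (i) fails only finitely many periodic points lie in the closure, and then run through the types of $\omega$-limit sets.

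\textbf{The gap.} The gap is the step you flag as the main obstacle in (c). The same gap also sits unacknowledged in your case (b). The basic set (respectively $Q_{\min}$) need not lie in $\overline{L}$; only $\omega_g(f^rx)$ does. So ``infinitely many periodic points inside $\overline{L}$'' does not follow from density of periodic points alone. Your proposed repairs for (c) are not carried out, and they are not needed. These were showing that components of $K_n$ lie in $\overline{L}$ up to finitely many exceptions, or appealing to Corollary~\ref{cor:proximal}.

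\textbf{The missing observation.} Since $\overline{L}$ is a subcontinuum of a graph, its boundary $\partial_G\overline{L}$ in $G$ is finite. Together with the finiteness of $F=\overline{L}\setminus L$, this makes $U=\Int_G\overline{L}\setminus F$ an open subset of $G$ with $U\subset L$ and $\overline{L}\setminus U\subset \partial_G\overline{L}\cup F$ finite. So every infinite subset of $\overline{L}$ meets $U$.
\begin{itemize}
\item In case (b), take $\omega_g(f^rx)$ infinite; it is then an infinite subset of $\overline{\Per(f)}\cap\overline{L}$.
\item In case (c), $\omega_g(f^rx)$ contains a $g$-minimal set $M'$, which is infinite because $Q$ has no periodic points. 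Moreover $\bigcup_{i<p}f^i(M')$ is an $f$-minimal subset of $Q$, hence equals $Q_{\min}\subset\overline{\Per(f)}$.
\end{itemize}
In either case some point of $\overline{\Per(f)}$ lies in the open set $U$. Hence $U\subset L$ contains a periodic point of $f$, contradicting your standing assumption that no $f^m(A)$ meets $\Per(f)$. This is exactly what the paper's one-line claim ``it follows that $\overline{K}\cap\Per(f)$ is infinite'' implicitly relies on. With it, (b) and (c) are handled uniformly and the rest of your argument goes through.
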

\begin{proof}
Suppose that \eqref{lem:per_pts:c1} and \eqref{lem:per_pts:c2} do not hold and let us prove \eqref{lem:per_pts:c3}.
	We can assume that $r,t$ are the minimal distinct nonnegative integers such that $f^r(A)\cap f^t(A)\neq\emptyset$.
	It follows that, for each $i=0,1,...,t-r-1$, the set $\bigcup_{n=0}^\infty f^{i+r+n(t-r)}(A)$ is connected.
	Together with $A,f(A),...,f^{r-1}(A)$, we conclude that $K=\bigcup_{n=0}^\infty f^n(A)$ has at most $r+t-r=t$ connected components.
	The set $\overline{K}=\overline{\bigcup_{n=0}^\infty f^n(A)}$ is therefore an invariant closed set with finite number of components.
	Obviously, $\omega_f(x)\subset \overline{K}$ for all $x\in A$.
	Since $(i)$ does not hold, $\overline{K}\cap\Per(f)\subset\partial K$ and hence it is finite.
	
	First suppose that for some $x\in A$, the set $\omega_f(x)$ is contained in a maximal solenoid.
	Recall that there is a minimal set $\omega\subset\omega_f(x)$ and  $\omega=\omega_f(x)\cap\overline{\Per(f)}$.
	It follows that $\overline{K}\cap\Per(f)$ is infinite, a contradiction.
	
	Let us now suppose that $\omega_f(x)$ is an infinite set contained in a basic set $\omega$ for some $x\in A$.
	It is well known (see~\cite[Lemma~6]{malek}) that $\omega\subset\overline{\Per(f)}$ and we again conclude that $K\cap\Per(f)$ is infinite, a contradiction.
	
	Therefore, since the case~\eqref{lem:per_pts:c2} is excluded, for each $x\in A$, $\omega_f(x)$ is a periodic orbit and $B=\bigcup_{x\in A}\omega_f(x)$ is obviously a finite set.
\end{proof}

	In Lemma~\ref{lem:degenerate_d} we describe the case when Corollary~\ref{cor:proximal} provides us with some degenerate continuum $D=\{d\}\in\Rec(\tilde{f})$.
	Lemma~\ref{lem:d-not-contained} proves the main theorem when such $\{d\}$ is disjoint with $\bigcup_{n=0}^\infty f^n(A)$ .
	
\begin{lemma}\label{lem:degenerate_d}
Let $A\in\C(G)$ and suppose that for $A$ there exists some degenerate $D=\{d\}\in\Rec(\tilde{f})$, satisfying Corollary~\ref{cor:proximal}.
	Then $A$ either satisfies property~\eqref{wandering} or~\eqref{circ} of Theorem~\ref{thm:main} or $d$ is a periodic point and for all $x\in A$, $\omega_f(x)=\mathcal{O}(d).$ 

\end{lemma}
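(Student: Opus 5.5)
We assume $A$ is not wandering and does not satisfy \eqref{circ}, and we show that $d$ is periodic and $\omega_f(x)=\mathcal{O}(d)$ for every $x\in A$. The first step is to understand what Corollary~\ref{cor:proximal} gives when $D$ is degenerate. There is a sequence $n_k$ with $\dist_H(f^{n_k}(A),\{d\})\to 0$, so $\diam f^{n_k}(A)\to 0$ and $f^{n_k}(A)\to d$. Also $f^{n_k}(d)\to d$, and $d\in\Rec(f)$ by Theorem~\ref{thm:rec-cont}.

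Since $A$ is not wandering, Lemma~\ref{lem:per_pts} applies, and we go through its three cases.

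\emph{Case \eqref{lem:per_pts:c2}.} Some $x\in A$ has a circumferential limit set $\omega_f(x)=M=E(K,f)$. Here we aim to show that every point of $A$ has the same limit set $M$, and then Lemma~\ref{lem:circ} gives \eqref{circ}, a contradiction. The argument uses the diameters: $\diam f^{n_k}(A)\to 0$ and $f^{n_k}(A)\to d$, so $d\in\omega_f(x)=M$. Since $M$ is minimal, being almost conjugate to an irrational rotation, the orbit of $d$ is dense in $M$. For any other $x'\in A$ we have $\dist(f^{n_k}(x'),f^{n_k}(x))\to 0$. I then want to use the structure of $K$ (a neighbourhood of $M$, with $f^{n_k}(A)$ eventually small and near $d\in M$) to push $f^{n}(A)$ into $K$ for some $n$, or at least to show $\omega_f(x')\supset M$ and then use the maximality in Theorem~\ref{thm:blokh-almost}. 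This is the most delicate step. If $d$ is an interior point of $K$, then a small continuum near $d$ lies in $K$. If instead $d$ lies on the boundary of $K$, then $d\in M$ is a boundary point of some fibre $\phi^{-1}(z)$, and I would argue using the invariance of $K$ together with the fact that points arbitrarily close to $d$ outside $K$ would have to be wandering out of the graph's compact structure.

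\emph{Case \eqref{lem:per_pts:c1}.} Some iterate $f^m(A)$ contains a periodic point $y$ of period $p$. Then $f^{n_k}(A)\ni f^{n_k-m}(y)$, and these points lie in the finite orbit $\mathcal{O}(y)$. Passing to a subsequence, $f^{n_k}(A)$ contains a fixed point $q$ of that orbit, so $d=q$ and $d$ is periodic. It remains to show that $\omega_f(x)=\mathcal{O}(d)$ for all $x\in A$. Along residues modulo $p$, the sets $f^{n}(A)$ containing the orbit points of $d$ have diameters tending to zero along $n_k$, and this should force them to shrink along the full progression. The tool I intend for this is continuity of $f^p$ near the periodic orbit: once $f^{n_k}(A)$ is a small arc containing $d$, the next iterates $f^{n_k+j}(A)$ are small arcs containing $f^j(d)$. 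I do not see how to guarantee that the diameters stay small forever, so here I plan to show instead that every $\omega_f(x)$ meets $\mathcal{O}(d)$ and is contained in a finite union of periodic orbits. For that I would reuse the argument of Lemma~\ref{lem:per_pts}: the closure of the orbit of $A$ has finitely many components, and solenoidal or basic sets would produce infinitely many periodic points inside it. Combining this with the shrinking of $f^{n_k}(A)$ to $d$ should then give $\omega_f(x)=\mathcal{O}(d)$.

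\emph{Case \eqref{lem:per_pts:c3}.} Every limit set is contained in a finite union $B$ of periodic orbits. Since $f^{n_k}(A)\to d$, we get $d\in B$, so $d$ is periodic. Each $\omega_f(x)$ is a periodic orbit, and proximality of points of $A$ along $n_k$ forces all of these orbits to contain $d$. Hence $\omega_f(x)=\mathcal{O}(d)$ for all $x\in A$.
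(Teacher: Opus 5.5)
Your overall plan is the paper's: exclude circumferential limit sets, then go through the cases of Lemma~\ref{lem:per_pts}. Your treatment of case \eqref{lem:per_pts:c3} is correct, and so is your identification $d=q$ in case \eqref{lem:per_pts:c1}. But case \eqref{lem:per_pts:c1} has a genuine gap where you need every $\omega_f(x)$ to be a periodic orbit.

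\textbf{The gap in case \eqref{lem:per_pts:c1}.} You propose to ``reuse the argument of Lemma~\ref{lem:per_pts}''. There, an infinite basic or solenoidal $\omega_f(x)$ forced infinitely many periodic points into $\overline{\bigcup_n f^n(A)}$. That was contradictory \emph{only because} no iterate of $A$ contained a periodic point. In case \eqref{lem:per_pts:c1} this hypothesis fails by definition, so infinitely many periodic points is not by itself a contradiction.

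The missing idea is that every periodic point $w$ lying in some $f^j(A)$ belongs to $\mathcal{O}(d)$. Otherwise $f^{n}(A)\cap\mathcal{O}(w)\neq\emptyset$ for all $n\geq j$, so $\dist_H(f^{n_k}(A),\{d\})\geq \dist(d,\mathcal{O}(w))>0$ for large $k$. Equivalently, $w=f^j(x)$ for some $x\in A$ with $d\notin\mathcal{O}(w)=\omega_f(x)$, contradicting $d\in\omega_f(x)$ for all $x\in A$. This is the mechanism you already used to get $d=q$, but it must be applied to \emph{all} periodic points in the orbit of $A$.

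With it, $\bigcup_n f^n(A)$ meets $\Per(f)$ only in the finite set $\mathcal{O}(d)$, and the argument of Lemma~\ref{lem:per_pts} does rule out an infinite basic $\omega_f(x)$. The paper says this directly: such a set lies in $\overline{\Per(f)}$ and would put some $w\in\Per(f)\setminus\mathcal{O}(d)$ into some $f^{m+rp}(A)$. Solenoids are even easier: $d\in\omega_f(x)$ is periodic, and solenoidal sets contain no periodic points. You are right that keeping the diameters small is not needed for this lemma.

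\textbf{Case \eqref{lem:per_pts:c2}.} The ``push $f^n(A)$ into $K$'' and boundary-of-$K$ discussion is not an argument and should be dropped; the step is a one-liner you already have the ingredients for. Since $f^{n_k}(A)\to\{d\}$, every $x'\in A$ has $d\in\omega_f(x')$. Because $d\in M$ and $M$ is minimal, $M=\overline{\mathcal{O}(d)}\subset\omega_f(x')$. Maximality (Theorem~\ref{thm:blokh-almost}) then gives $\omega_f(x')=M$, and Lemma~\ref{lem:circ} yields \eqref{circ}.

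The paper does this up front, before invoking Lemma~\ref{lem:per_pts}. That ordering also handles the overlap between the cases of that lemma: in case \eqref{lem:per_pts:c1} you likewise need that no $\omega_f(x)$ with $x\in A$ is circumferential.
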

\begin{proof}
	Suppose $A$ does not satisfy the property~\eqref{wandering} nor~\eqref{circ} of Theorem~\ref{thm:main} and let us prove the assertion.
	
	Since, for all $x\in A$, $d\in\omega_f(x)$, we conclude that $d$ is not an element of a circumferential set and that $\omega_f(x)$ is not a circumferential set for any $x\in A$. 
	Indeed, it is not hard to see that if $d$ was an element of a circumferential set $M$ then we would have $M=\omega_f(x)$ for all $x\in A$ and by Lemma~\ref{lem:circ}, the condition~\eqref{circ} of Theorem~\ref{thm:main} would be satisfied which is in contradiction with our assumption.	
	
	Since $A$ is non-wandering, we can apply Lemma~\ref{lem:per_pts}.
	Continuum $A$ does not satisfy the property \eqref{lem:per_pts:c2} of the Lemma and in both other cases of the Lemma, there is a point $x\in A$ and a periodic orbit $P=\{y,f(y),...,f^{p-1}(y)\}$ such that $\lim_n\dist(f^n(x),P)=0$.
	It easily follows that $d\in P$.

	We have yet to prove that for all $x\in A$, $\omega_f(x)=\mathcal{O}(d)$.
	First note that, since $A$ is non-wandering and since condition~\eqref{lem:per_pts:c2} of Lemma~\ref{lem:per_pts} does not hold, we are left with conditions~\eqref{lem:per_pts:c1} and~\eqref{lem:per_pts:c3}.
	If condition~\eqref{lem:per_pts:c3} is satisfied then, since $d\in \omega_f(x)$, we have that $\omega_f(x)=\mathcal{O}(d)$ for all $x\in A$.
	Suppose on the contrary, i.e. that there is some $u\in A$ such that $\omega_f(u)$ is an infinite subset of a basic set and $d\in\omega_f(u)$.
	Then the condition~\eqref{lem:per_pts:c1} of Lemma~\ref{lem:per_pts} holds, i.e. there exist some $m\geq 0$ and some periodic point $z$ such that $z\in f^m(A)$.
	Clearly, $z\in\mathcal{O}(d)$, otherwise $\dist_H(f^n(A),\{d\})\geq \dist(\mathcal{O}(z),d)>0$ for all $n\geq m$.
	Let $p$ be the period of $d$.
	Note that, since $z\in f^{m+np}(A)$ for all $n\geq 0$ and since $\omega_{f^p}(f^m(u))$ is an infinite subset of a basic set, which is contained in $\overline{\Per(f)}$, then there is some \[w\in \left(\Per(f)\setminus\mathcal{O}(d)\right)\cap f^{m+rp}(A)\] for some $r\geq 0$.
	But this contradicts the earlier observation that $d\in\omega_f(x)$ for all $x\in A$.	

Indeed, for all $x\in A$, $\omega_f(x)=\mathcal{O}(d)$.
\end{proof}

\begin{lemma}\label{lem:self-covering}
	Let $A\in\C(G)$ and suppose there is some continuum $D\in\omega_{\tilde{f}}(A)$ which is a fixed point of $\tilde{f}$.
	If there are some $m,k\geq 0$ and some continuum $B\subset f^m(A)$ such that $B\subset f^k(B)$ then $B\subset D$.
\end{lemma}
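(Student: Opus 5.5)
The plan is to use the self-covering property $B\subset f^k(B)$ to push $B$ forward into every sufficiently late iterate $f^n(A)$, up to a bounded shift. We then pass to the limit along a subsequence realizing $D\in\omega_{\tilde f}(A)$, and finally use that $D$ is fixed to undo the shift. Throughout we take $k\geq 1$. For $k=0$ the hypothesis is vacuous, and the statement is then meaningless, so this must be the intended reading.

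\emph{Step 1: iterating the covering.} From $B\subset f^k(B)$ we get $f^{k}(B)\subset f^{2k}(B)$ by applying $f^k$, and inductively $B\subset f^{jk}(B)$ for all $j\geq 0$. Hence, for all $j\geq 0$ and $0\leq s<k$,
\[ f^s(B)\subset f^{s+jk}(B)\subset f^{m+jk+s}(A). \]
Consequently every $n\geq m$, written as $n=m+jk+s$ with $0\le s<k$, satisfies $f^n(A)\supset f^{s}(B)$.

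\emph{Step 2: passing to the limit.} Since $D\in\omega_{\tilde f}(A)$, there is an increasing sequence $n_i\to\infty$ with $\dist_H(f^{n_i}(A),D)\to 0$. Write $n_i=m+j_ik+s_i$. Since $s_i$ takes only finitely many values, we may pass to a subsequence on which $s_i=s$ is constant. Then $f^s(B)\subset f^{n_i}(A)$ for all $i$. A Hausdorff limit of compact sets each containing a fixed compact set $F$ also contains $F$: every point of $F$ is at distance $0$ from each $f^{n_i}(A)$, hence at distance $0$ from the closed set $D$. Therefore $f^s(B)\subset D$.

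\emph{Step 3: undoing the shift.} Since $\tilde f(D)=D$, we have $f^{k-s}(D)=D$. Applying $f^{k-s}$ to $f^s(B)\subset D$ gives $f^k(B)\subset D$. Combined with $B\subset f^k(B)$, this yields $B\subset D$.

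The only point needing care is the residue bookkeeping in Step 2: a subsequence along which $f^{n_i}(A)$ contains exactly $B$ need not exist. The constant-residue subsequence combined with the invariance of $D$ in Step 3 is what handles this. The remaining steps are routine.
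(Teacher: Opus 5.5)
Your proof is correct. It differs from the paper's only in how it handles the one real issue: the approach times $n_i$ need not lie in the residue class of $m$ modulo $k$, which is where $B\subset f^n(A)$ is known.

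The paper argues by contradiction. It takes $y\in B\setminus D$ with $\epsilon=\dist(y,D)$, and uses continuity of $\tilde f$ at the fixed point $D$ to find $\delta>0$ such that $\dist_H(C,D)<\delta\Rightarrow\dist_H(f^t(C),D)<\epsilon$ for $0\le t<k$. It then takes a time $r$ with $\dist_H(f^r(A),D)<\delta$ and moves it forward by some $j<k$ into the residue class of $m$. This gives $y\in B\subset f^{r+j}(A)$ even though $f^{r+j}(A)$ is $\epsilon$-close to $D$, a contradiction.

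You move the \emph{set} instead of the time. The continuum $f^s(B)$ lies in every $f^n(A)$ with $n\equiv m+s\pmod k$, so a plain Hausdorff-limit argument puts it in $D$. You then pull back using $f^{k-s}(D)=D$ and a second application of $B\subset f^k(B)$. Your route is direct and needs only $f(D)=D$ and closedness of $D$, with no $\epsilon$--$\delta$ continuity estimate. The paper's route uses the self-covering only to get $B\subset f^{m+nk}(A)$, but it needs continuity of $\tilde f$.

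Two small corrections to your commentary. First, approach times in the residue class of $m$ do always exist, even if they cannot be extracted from your particular $(n_i)$: since $\tilde f$ is continuous and $\tilde f(D)=D$, we have $f^{n_i+j}(A)\to D$ for each fixed $j$, and this is exactly what the paper exploits. Second, for $k=0$ the hypothesis is not vacuous but trivially satisfied, and the conclusion is then false in general. For example, take $f(x)=x/2$ on $[0,1]$, $A=B=[0,1]$, $m=0$, $D=\{0\}$. So $k\ge 1$ is genuinely required, as the paper's proof also tacitly assumes.
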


\begin{proof}
	Suppose on the contrary, that $B\setminus D\neq\emptyset$.
	Take some point $y\in B\setminus D$ and set $\epsilon=\dist(y,D)>0$.
	Notice that $\dist_H(C,D)<\epsilon$ implies $y\notin C$.
	Since $\tilde{f}(D)=D$, by using the continuity of $\tilde{f}$ $k-1$ times, we find some $\delta>0$ such that for each $C\in\C(G)$, $\dist_H(C,D)<\delta$ implies $\dist_H(f^t(C),D)<\epsilon$ for $t=0,1,...,k-1$.
	Since $D\in\omega_{\tilde{f}}(A)$, there is some $r>m$ such that $\dist_H(f^r(A),D)<\delta$.
	Notice that for all $n\geq 0$, $B\subset f^{m+nk}(A)$. 
	Therefore we find some $0\leq j<k$ such that $y\in B\subset f^{r+j}(A)$, while $\dist_H(f^{r+j}(A),D)<\epsilon$, a contradiction.
\end{proof}

{
Lemma~\ref{lem:degenerate_d} {covers the case} 
when there is a periodic point $d\in G$ such that, for all $x\in A$, $\omega_f(x)=\{d\}$. 
{Intuitively it seems that this implies $\omega_{\tilde{f}}(A)=\mathcal{O}_{\tilde{f}}(\{d\})$, however unfortunately it is not always the case.} In the following example we provide a graph map $f\colon G\to G$ with nondegenerate fixed subgraph $A$ and a fixed point $d\in A$, such that $\omega_f(x)=\{d\}$ for all $x\in A$.
\begin{example}
Let $I=[0,1]$ and let $g\colon I\to I$, $g(x)=x^2$. 
Note that $0$ and $1$ are fixed points and that $\lim_n g^n(x)=0$ for $x\neq 1$.
$M=\{0,1\}$ is a closed invariant set so we can define a factor system $(G,f)$ where $G=I/_M$ is homeomorphic to a unit circle and $f=g/_M$ has a single fixed point $d$, with $\{d\}=\pi_M(M)$, such that $\bigcup_{x\in G}\omega_f(x)=\{d\}$.
But $f$ is onto so $G\in\Fix\left(\tilde{f}\right)$ and hence $\omega_{\tilde{f}}(G)=\{G\}$.
In fact, for any nondegenerate $A\in\C(G)$ such that $d\in A$, we have $\omega_f(x)=\{d\}$ for all $x\in A$ and $\omega_{\tilde{f}}(A)=\{G\}$.
\end{example}
}

\begin{lemma}\label{lem:d-not-contained}
Let $A\in\C(G)$.
	Suppose that for $A$ there exists some degenerate $D=\{d\}\in\Rec(\tilde{f})$, satisfying Corollary~\ref{cor:proximal}.
	Furthermore, suppose that $d\notin\bigcup_{n=0}^\infty f^n(A)$.
	Then $A$ either satisfies property~\eqref{wandering} or~\eqref{circ} of Theorem~\ref{thm:main} or $\omega_{\tilde{f}}(A)=\mathcal{O}(\{d\})$.
\end{lemma}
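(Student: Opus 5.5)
We assume that $A$ satisfies neither property~\eqref{wandering} nor~\eqref{circ} of Theorem~\ref{thm:main} and prove that $\omega_{\tilde{f}}(A)=\mathcal{O}(\{d\})$. By Lemma~\ref{lem:degenerate_d}, $d$ is then a periodic point, say of period $p$, and $\omega_f(x)=\mathcal{O}(d)$ for all $x\in A$. Replacing $f$ by $f^p$ and $A$ by a suitable iterate $f^j(A)$, and using Lemma~\ref{lem:iterates} for the final conclusion, we may assume that $d$ is a fixed point. We may also assume that $\omega_f(x)=\{d\}$ for the relevant points, at least along the subsequence $n_k$ supplied by Corollary~\ref{cor:proximal}. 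The goal is to show that $f^n(A)\to\{d\}$ in the Hausdorff metric along the full sequence, possibly after passing to residues modulo $p$.

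The key geometric input is the hypothesis $d\notin\bigcup_n f^n(A)$. Fix a small connected neighbourhood $U$ of $d$ that is a star: it contains no vertices other than possibly $d$, and it consists of finitely many open arcs (sides) emanating from $d$. By Corollary~\ref{cor:proximal}, $f^{n_k}(A)\to\{d\}$. Hence for large $k$ the continuum $f^{n_k}(A)$ lies in $U$ and misses $d$, so it lies inside a single side $J$ of $d$; in other words, it is an arc in $J\setminus\{d\}$. The same holds for every iterate that falls into $U$, since no iterate ever contains $d$.

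Next we analyse $f$ on $J$ near $d$, which reduces to a one-dimensional question. Since $f(d)=d$ and $f^{n}(A)$ never contains $d$, consider the interval $f^{n_k}(A)=[a_k,b_k]\subset J$, ordered so that $a_k$ is the point closer to $d$. We will use the fact that every point of $A$ converges to $d$, so no point of $f^{n_k}(A)$ can escape permanently. The risk is that the whole continuum grows before shrinking, that is, $\operatorname{diam} f^n(A)$ does not tend to $0$ while each point still converges to $d$.

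To rule this out, we suppose that some subsequence satisfies $\operatorname{diam} f^{m}(A)\ge\epsilon$. Take the earliest exit time from a small neighbourhood $V$ after $n_k$. Because $f^{n}(A)$ moves continuously and never covers $d$, the arc $f^{n}(A)$ just before exit is contained in $J$ and is close to $d$. Its image then contains a subarc stretching from near $d$ to distance $\epsilon$, still avoiding $d$. Passing to a limit continuum $E\in\omega_{\tilde f}(A)$, we obtain a nondegenerate $E$ in the closure of $J$ with $d\in E$. Then, by an argument modelled on Lemma~\ref{lem:self-covering}, we can find nested arcs $B\subset f^m(A)$ on the side $J$ with $B\subset f^k(B)$, obtained by comparing two consecutive near-$d$ returns via the intermediate value theorem on the ordered arc $J$. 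Such a $B$ contains a periodic point by the covering argument, and this point must be $d$ or a point not converging to $d$. The first alternative contradicts $d\notin f^n(A)$, and the second contradicts $\omega_f(x)=\{d\}$.

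The main obstacle is this last step: producing the self-covering arc $B$ from the assumption that the diameters do not tend to zero, and doing so carefully in the graph, where $f^{n}(A)$ could leave $J$ and travel around loops. I would first handle returns inside a single side $J$, using the linear order on $J$ and the covering relation from near-$d$ intervals to long intervals. After that I would control excursions by the finiteness of the sides of $d$, applying a pigeonhole argument over which side the iterate re-enters.
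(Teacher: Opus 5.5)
\textbf{There is a genuine gap: the self-covering continuum is never constructed.} Your reduction to a fixed point $d$ and your overall plan are right: an exit-time argument near $d$ that ends in a self-covering continuum contradicting Lemma~\ref{lem:self-covering}. But you never produce a continuum $B\subset f^m(A)$ with $B\subset f^k(B)$, and you acknowledge this yourself.

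Comparing two near-$d$ returns does not give such a $B$. Suppose two late iterates lie in the same side and span distances $[a_1,b_1]$ and $[a_2,b_2]$ from $d$. The earlier one is contained in the later one only if $a_2\le a_1$ and $b_2\ge b_1$. You can control the near endpoints, but not the far endpoints of exit arcs, which merely lie somewhere in $[\epsilon,\zeta)$. Knowing that $f^k$ of a whole iterate covers a subarc $B$ does not give $B\subset f^k(B)$ for that subarc. Also, the side on which an exit occurs need not be a side where you earlier saw a small iterate. So ``IVT on $J$'' plus a pigeonhole over sides does not close the argument as described.

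\textbf{The missing idea: fix reference iterates before choosing $\delta$.}
\begin{enumerate}
\item Split the sides of $d$ into two kinds. In the first kind, arbitrarily small one-sided neighbourhoods contain $f^n(A)$ for infinitely many $n$. Truncate the remaining sides so that they contain no iterate of $A$ at all.
\item Take $\zeta$ below all truncation lengths, and choose $\epsilon<\zeta$ with $f(B(d,\epsilon))\subset B(d,\zeta)$.
\item For each side $K_i$ of the first kind, fix one iterate $f^{k_i}(A)\subset K_i\cap B(d,\epsilon)$. Set $\delta=\min_i\dist(d,f^{k_i}(A))$, which is positive because $d\notin f^{k_i}(A)$.
\item Since some $x\in A$ has $f^n(x)\to d$, every late iterate meets $B(d,\delta)$.
\item Suppose the iterates leave $B(d,\zeta)$ after some large $n_t$. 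Let $f^N(A)$ be the last iterate still inside $B(d,\zeta)$. It misses $d$, so it lies in a single side, necessarily some $K_i$. It reaches outside $B(d,\epsilon)$ and meets $B(d,\delta)$.
\item Hence $f^N(A)$ contains the whole arc $f^{k_i}(A)$. So $B=f^{k_i}(A)$ satisfies $B\subset f^{N-k_i}(B)$, and Lemma~\ref{lem:self-covering} gives $B\subset\{d\}$, a contradiction.
\end{enumerate}

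\textbf{Two consequences for your plan.} First, your worry about excursions through loops disappears: only the last iterate before leaving the star neighbourhood matters. Second, you do not need a periodic point in $B$, and you should avoid relying on one. In a graph, $B\subset f^k(B)$ alone does not force a fixed point of $f^k$ in $B$; an arc mapped once around a circle is a counterexample. Lemma~\ref{lem:self-covering} gives the contradiction directly.
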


\begin{proof}
By Lemma~\ref{lem:degenerate_d} we know that $A$ either satisfies property~\eqref{wandering} or~\eqref{circ} of Theorem~\ref{thm:main} or $d$ is periodic and 
for all $x\in A$, $\omega_f(x)=\mathcal{O}(d).$	  
Suppose the latter case and let us show that $\omega_{\tilde{f}}(A)=\mathcal{O}(\{d\})$.
 Since $d$ is periodic with some minimal period $p$, there is some $\epsilon^\prime$ such that $\dist(d,f^n(d))<\epsilon^\prime$ implies $d=f^n(d)$, that is $p$ divides $n$.
  For increasing sequence $(n_k)$ provided by Corollary~\ref{cor:proximal} we have that \[(\exists k_0\geq 0)(\forall k\geq k_0) \dist_H(f^{n_k}(A),\{d\})<\epsilon^\prime \text{ and } \dist(f^{n_k}(d),d)<\epsilon^\prime,\]
 meaning that all but finitely many elements of $(n_k)$ are multiples of $p$.
	This, alongside with Lemma~\ref{lem:iterates} allows us to replace $f$ with $f^p$, i.e. to assume that $d$ is a fixed point. 
	
	We claim that for each $x\in G$, the set $\{n\geq 0\colon x\in f^n(A)\}$ is finite.
	Suppose on the contrary, i.e. that there is some $y\in G$ not satisfying this property.
	Note that obviously $y\neq d$.
	For each $\epsilon>0$, for all but finitely many $k$ we have $f^{n_k}(A)\subset B(d,\epsilon)$.
	If $\epsilon$ is small enough and $f^{n}(A)\subset B(d,\epsilon)$, $d\notin f^{n}(A)$   for some $n\geq 0$ then $f^{n}(A)$ is contained in some one-sided neighborhood of $d$.
	We can pick the sides of $d$ in the way that for each side $T$, either each element of $T$ contains $f^{n}(A)$ for infinitely many $n$, or each element of $T$ does not contain $f^n(A)$ for any $n$.
	Let $T_1,T_2,...,T_r$ be the sides of $d$ with the property described above and for each $i$, let $\zeta_i$ be the supremum over lengths of elements of $T_i$.
	Finally, put $\zeta=\min\{\dist(d,y),\zeta_1,\zeta_2,...,\zeta_r\}$.
After renumeration, suppose $K_1\in T_1,K_2\in T_2...,K_v\in T_v$, for some $v\leq r$, are all the one-sided neighborhoods of length $\zeta$ containing $f^n(A)$ for infinitely many $n$.
	By continuity of $f$, since $d$ is a fixed point, there exists some $0<\epsilon<\zeta$ such that $f(B(d,\epsilon))\subset B(d,\zeta)$.
	For each $i=1,2,...,v$ we find some $k_i\geq 0$ such that $f^{k_i}(A)\subset B(d,\epsilon)\cap K_i$.
	Let $\delta=\min\{\dist(d,f^{k_i}(A)),\ i=1,2,...,v\}>0$.
Pick some $x\in A$. While now there are $p$ fixed points, still
	by Lemma~\ref{lem:degenerate_d} there is some $n_0\geq 0$ such that $n\geq n_0$ implies $f^n(x)\in B(d,\delta)$, since all points of $A$ are attracted by $d$.
There is some $n_t>\max\{n_0,k_1,k_2,...,k_v\}$ such that $f^{n_t}(A)\subset B(d,\delta)$.
By assumption, $f^{n_t}(A)$ contains some point in the negative orbit of $y$.
This means there is some minimal $m$ such that $f^{n_t+m+1}(A)\setminus B(d,\zeta)\neq\emptyset$.
	This means $f^{n_t+m}(A)\subset B(d,\zeta)$ and $f^{n_t+m}(A)\setminus B(d,\epsilon)\neq\emptyset$.
	Since $d\notin f^{n_t+m}(A)$, there is some $i\in\{1,2,...,v\}$ such that $f^{n_t+m}(A)\subset K_i$.
	Moreover, 
	$ f^{n_t+m}(x)\in f^{n_t+m}(A)\cap B(d,\delta)$ and $f^{n_t+m}(A)\setminus B(d,\epsilon)\neq\emptyset$, meaning $f^{k_i}(A)\subset f^{n_t+m}(A)$ and $k_i<n_t+m$.
	This is in contradiction with Lemma~\ref{lem:self-covering}, {because $\{d\}\neq f^{k_i}(A)$}.
Indeed, for each $x\in G$, the set $\{n\geq 0\colon x\in f^n(A)\}$ is finite, that is, the claim holds.

Now to finish the proof, take any $\kappa>0$ and some $z\in A$.
	We have that $f^n(z)\in B(d,\kappa)$ for all but finitely many $n$. 
	If $f^n(A)\setminus B(d,\kappa)\neq\emptyset$ for infinitely many $n$ then there would be some point contained in $\partial B(d,\kappa)\cap f^n(A)$ for infinitely many $n$, contradicting this way the claim from the previous paragraph.
	Therefore, for all $\kappa> 0$ we have $f^n(A)\subset B(d,\kappa)$ for all but finitely many $n$. The proof is completed.	
	\end{proof}

In the series of auxiliary results presented below we will mostly assume that, for given $A\in\C(G)$, there is some $D\in\C(G)$, which is a fixed point of $(\C(G),\tilde{f})$ and a unique periodic element of $\omega_{\tilde{f^n}}(A)$ for every $n\geq 1$.
Although this may seem like a serious restriction, Lemma~\ref{lem:periodic_supset} will show that $D\in\Rec(\tilde{f})$ provided Corollary~\ref{cor:proximal}, in the most of the cases satisfies this assumption. Namely, it holds whenever $D$ provided Corollary~\ref{cor:proximal} is either nondegenerate or it is a periodic singleton contained in $f^m(A)$ for some $m\geq 0$.

	The following result, proved in~\cite{Jelic}, concerns circumferential sets and will be of help in the proof of Lemma~\ref{lem:circumferential}.
	
\begin{lemma}\label{lem:contains_circumferential}
If a nondegenerate continuum $A$ is recurrent and there is $x\in A$ such that $\omega_f(x)\neq S(K,f),$ where $S(K,f)$ is a circumferential set 
then $C\subset A$ for each component $C$ of $K$ intersecting $A.$
\end{lemma}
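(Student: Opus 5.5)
The plan is to reduce to the case where $A$ is a periodic point of $\tilde f$, and then use the almost conjugacy of Remark~\ref{rem:cir-rot} to the irrational rotation $R$.

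\textbf{Step 1: reduction to the periodic case.} Since $A$ is nondegenerate and recurrent, Theorem~\ref{thm:rec-cont} gives $A\in\Per(\tilde f)\cup\mathcal R$. Suppose $A\in\mathcal R$, say $A\in\mathcal R_{C'}$ for a component $C'$ of the cycle of some circumferential set $M'=S(K',f)$. Then $A\subset C'$. I claim every point of $K'$ has $\omega$-limit set equal to $M'$. The reason is that $f$ has no periodic points in $K'$, so $\omega_f(y)$ cannot be finite. Its image under $\phi$ is closed and invariant under the irrational rotation, hence it is the whole circle. Minimality of $M'$, which comes through $\phi$ from the minimality of $R$, then forces $\omega_f(y)=M'$. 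If $M'=S(K,f)$, this contradicts the hypothesis that some $x\in A$ has $\omega_f(x)\neq S(K,f)$. If $M'\neq S(K,f)$, the cycles $K'$ and $K$ are disjoint, because distinct circumferential sets sit on disjoint cycles of graphs~\cite{B1}. Then $A\subset C'$ does not meet $C$, and the claim holds vacuously. So we may assume $\tilde f^q(A)=A$ for some $q\geq1$.

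\textbf{Step 2: setup in the periodic case.} Fix a component $C$ of $K$ of period $r$ with $A\cap C\neq\emptyset$. Let $\phi\colon (C,f^r)\to(\mathbb S^1,R)$ be the almost conjugacy, and put $g=f^{qr}$. Then $g(A)=A$ and $g(C)=C$, so $g(A\cap C)\subset A\cap C$, and $\phi\circ g=R^{q}\circ\phi$ on $C$ with $R^q$ still an irrational rotation. The set $\phi(A\cap C)$ is closed, forward invariant under $R^q$ and nonempty, so it is all of $\mathbb S^1$.

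\textbf{Step 3: finding a nondegenerate arc.} Since $A\cap C$ is a closed subset of a graph with finitely many boundary points relative to $C$, it has finitely many components. Its $\phi$-image is the whole circle, so some component $J$ of $A\cap C$ has nondegenerate image $\phi(J)$. Otherwise $\phi(A\cap C)$ would be a finite set.

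\textbf{Step 4: covering $C$.} The sets $g^n(J)\subset A\cap C$ have images $R^{qn}(\phi(J))$, which are rotated copies of a fixed nondegenerate arc. Finitely many of them cover $\mathbb S^1$, and consecutive ones overlap in open arcs. The overlaps meet $M$ only in points where $\phi$ is injective on $M$, outside the countably many nondegenerate fibres. Hence these finitely many continua $g^{n_i}(J)$ chain together, and their union is a continuum $L\subset A\cap C$ with $\phi(L)=\mathbb S^1$.

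It remains to show $L\supset C$. Here I use that $C\setminus L$ is open in $C$ with finitely many components. Each such component $U$ has $\phi(U)$ contained in $\phi(\partial U)$, since $\phi$ is monotone with connected fibres and $\phi(L)$ is already everything. So $U$ lies inside a single nondegenerate fibre $\phi^{-1}(z)$. Invariance of $L$ up to enlarging by more iterates, together with the fact that $R$ moves $z$ along an infinite orbit while $C\setminus L$ has only finitely many components, lets us add further iterates $g^n(J)$ to $L$. Each added iterate fills in such a fibre, because the image of a continuum crossing the fibre $\phi^{-1}(R^{-qn}z)$ must contain the fibre $\phi^{-1}(z)$ when the two sides of the fibre are covered. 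After finitely many additions we get $C\subset A$.

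\textbf{Expected main obstacle.} The hard part is this last filling-in step. The difficulty is that $g$ need not map a fibre onto the next fibre: the maps $I_n\to I_{n+1}$ in a circumferential set need not be surjective. I would handle it by using connectedness and $g(A)=A$ directly. A fibre gap $U\subset C\setminus A$ with both endpoints in $A$ can happen only if $A$ goes around $C$ and then leaves $C$. That is where the hypothesis on a point $x\in A$ with $\omega_f(x)\neq S(K,f)$ should enter. Such an $x$ forces $A\not\subset C$. Then the finitely many exits of $A$ from $C$, permuted by $g$, must be fixed points of $R^{q}$ in the circle, and there are none. This forces $\partial_C(A\cap C)=\emptyset$, that is, $C\subset A$.
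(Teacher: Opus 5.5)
There is a genuine gap at the end of Step~4, and the repair you sketch under ``Expected main obstacle'' does not work. The sentence about ``filling in'' fibres is not an argument. The claim that $g$ permutes the finitely many points where $A$ leaves $C$ is also unjustified: $g$ is not injective, and $g(C)=C$ does not send points of $\partial_G C$ (or of $\partial_C(A\cap C)$) to such points. Worse, the argument proves too much. If the exit points were permuted, their $\phi$-images would form a finite $R^q$-invariant set, so there would be no exits at all. That contradicts your own deduction that $A\not\subset C$, so the argument would show the hypotheses can never hold for a periodic $A$. They can hold: take $A=G$ for a surjective $f$ with a fixed point outside $K$, for instance a Denjoy circle $C$ with an arc attached at $c$, where the free end of the arc is fixed and $f(c)\neq c$. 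This $A$ satisfies every hypothesis, and its exit point $c$ is not mapped to an exit point.

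The obstacle you worry about is illusory, and removing it is the missing idea. The identity $\phi\circ g=R^q\circ\phi$ holds on all of $C$, and $g(C)=C$. Hence $g^{-1}(\phi^{-1}(R^q w))\cap C=\phi^{-1}(w)$, and therefore $g(\phi^{-1}(w))=\phi^{-1}(R^q w)$ for every $w\in\mathbb{S}^1$. The fibre maps need not be injective, but they are always onto.

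With this, Step~2 already suffices and Steps~3--4 can be dropped:
\begin{enumerate}
\item Since $\phi(A\cap C)=\mathbb{S}^1$ and fibres are connected, apply your boundary argument to the components of $C\setminus A$ rather than of $C\setminus L$. This gives $C\setminus A\subset\phi^{-1}(F)$ for a finite set $F\subset\mathbb{S}^1$.
\item Given $z\in F$, choose $j$ with $R^{-qj}z\notin F$; this is possible because $R^q$ has no periodic points.
\item Then $\phi^{-1}(R^{-qj}z)\subset A$, so $\phi^{-1}(z)=g^j(\phi^{-1}(R^{-qj}z))\subset g^j(A)=A$. Hence $C\subset A$.
\end{enumerate}
In the periodic case the hypothesis on $x$ is never used; it enters only in Step~1, to exclude $\mathcal R$.

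Be aware also that Step~1 rests on Theorem~\ref{thm:rec-cont}. The present paper gives no proof of this lemma and quotes it from \cite{Jelic}, where it is one of the steps toward that very characterization. So as a proof of the cited result your reduction is circular. It is acceptable only because Theorem~\ref{thm:rec-cont} is treated here as an independent quoted result. A self-contained proof would have to work with the recurrence of $A$ directly.
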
	
	
\begin{lemma}\label{lem:circumferential}
	Let $A\in\C(G)$ and let $D$ be a fixed point of $(\C(G),\tilde{f})$. 
	Suppose that there is some circumferential set $M=E(K,f)$ in $D$.
	Then for all $y\in K$, $\lim_n\dist(y,f^n(A))=0$.
	\end{lemma}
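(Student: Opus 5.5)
We will use the standing assumption stated just before Lemma~\ref{lem:contains_circumferential}: $D$ is a fixed point of $\tilde f$ and a (unique periodic) element of $\omega_{\tilde f}(A)$. The proof has two parts. First we show $K\subset D$. Then we upgrade the subsequential convergence $f^{n_i}(A)\to D$ to the statement that every $y\in K$ is approached by \emph{all} late iterates $f^n(A)$.

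\emph{Step 1: $K\subset D$.} Since $D$ contains the infinite set $M$, it is nondegenerate, and since it is fixed it is recurrent for $\tilde f$. We split into two cases.
\begin{itemize}
\item Some $x\in D$ has $\omega_f(x)\neq M$. Then Lemma~\ref{lem:contains_circumferential} gives $C\subset D$ for every component $C$ of $K$ meeting $D$.
\item Every $x\in D$ has $\omega_f(x)=M$. Then Lemma~\ref{lem:circ} gives $f^k(D)\subset K$ for some $k$. Since $f^k(D)=D$ and $D$ is connected, $D$ lies in a single component $C$ of period $r$, and $f^r(D)=D$. Using the almost conjugacy $\phi\colon (C,f^r)\to(\mathbb S^1,R)$ of Remark~\ref{rem:cir-rot}, $\phi(D)$ is an $R$-invariant arc, hence equals $\mathbb S^1$. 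A nondegenerate fiber only partially contained in $D$ would be carried by $f^r$ to partial fibers over the distinct points $\phi$-rotated by the irrational angle. This contradicts $f^r(D)=D$ together with the uniform bound on the boundary of $D$. (Alternatively, compare with the description of periodic continua in Theorem~\ref{thm:rec-cont}.) Hence $D=C$.
\end{itemize}
In both cases some component of $K$ lies in $D$. Since $D$ is $f$-invariant and $f$ permutes the components of $K$ cyclically, we get $K\subset D$.

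\emph{Step 2: from a subsequence to all $n$.} Fix a component $C$ of $K$ and $\epsilon>0$. Pick $n$ with $\dist_H(f^n(A),D)$ very small. Then $f^n(A)$ is a continuum meeting every small ball around points of $C$. Because the vertices of $G$ are finite, the intersection $f^n(A)\cap C$ contains a continuum $J$ whose image $\phi(J)$ is an arc of length at least $1-\eta$, with $\eta\to0$ as $\dist_H(f^n(A),D)\to0$. The semiconjugacy then gives $\phi(f^{rj}(J))\supset R^j(\phi(J))$ up to endpoints. Since $R$ is an isometry, every later iterate $f^{n+rj}(A)$ contains a continuum in $C$ (or in the appropriate shifted component) whose $\phi$-image misses only an arc of length $\le\eta$.

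It then remains to show: if a subcontinuum $L\subset C$ has $\phi(L)$ missing only an arc $I$ of length $\le\eta$, then $L$ is $\epsilon$-dense in $C$. The points of $C$ possibly far from $L$ all lie in $\phi^{-1}(\overline I)$. By uniform continuity of $\phi$ and the structure of circumferential sets (countably many fibers, of which only finitely many have diameter $\ge\epsilon/2$), $\phi^{-1}(\overline I)$ has small diameter for $\eta$ small, provided $I$ avoids the finitely many large fibers.

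\emph{Main obstacle.} The hard case is when the moving arc $R^j(\phi(J))^c$ passes over one of the finitely many large fibers. Each such fiber is wandering, so it is visited by the gap at most a bounded number of times. I would first try to shrink $\eta$ so that the gap arc never contains the image of a large fiber at the relevant times. Failing that, I would use that the endpoints of $\phi(J)$ correspond to points of $f^n(A)$ that come close to $M$ and hence to the fiber's boundary, which lies in $M$. Once this is settled, $\dist(y,f^m(A))<\epsilon$ holds for all $y\in K$ and all $m\ge n$, which proves the lemma.
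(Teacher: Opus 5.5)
Your Step~2 has a genuine gap, and it is the one you call the main obstacle; the fix you propose rests on a false claim. The uncovered arc you control at time $n+rj$ is $R^j(I)$. It lies over a large fibre $F_a=\phi^{-1}(a)$ iff $R^{-j}(a)\in I$. Since the backward $R$-orbit of $a$ is dense, this happens for infinitely many (indeed syndetically many) $j$, however small $\eta$ is. The wandering of $F_a$ does not help: it only says that the fibres $F_{R^{-j}a}$ are distinct, and their base points are dense in $\mathbb S^1$. At those times the lower bound $\phi(f^{n+rj}(A)\cap C)\supseteq R^j(\phi(J))$ says nothing about whether $f^{n+rj}(A)$ comes near the middle of $F_a$. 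Using several reference times does not fix this by itself, because the gaps from different good times may be nested, and their intersection still passes over every large fibre infinitely often.

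Your reduction ``$\phi(L)$ misses only a short arc, hence $L$ is $\epsilon$-dense in $C$'' also fails away from the gap. If $a$ is an endpoint of $\phi(L)$, or if $F_a$ contains branch points of $G$ and carries side branches, then $L$ can meet $F_a$ without approaching all of it. There are also two smaller points. First, a single component of $f^n(A)\cap C$ need not have $\phi$-image of length close to $1$, since $f^n(A)$ may leave $C$ and re-enter through another part of $D$. Second, in Step~1(b) partial fibres propagate backwards, not forwards, since $f^r$ may fill a fibre. The correct relations are $D\cap F_a=f^r(D\cap F_{R^{-1}a})$ and $f^r(F_{R^{-1}a})=F_a$.

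The missing ingredient is dynamical rather than metric. The paper splits according to whether every $z\in A$ has $\omega_f(z)=M$.

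\emph{All points attracted to $M$.} Lemma~\ref{lem:circ} gives $f^k(A)\subset K$, hence $D=K$. Since $R$ is an isometry and $\mathbb S^1\in\omega_{\tilde R}(\phi(f^k(A)))$, one gets $\phi(f^k(A))=\mathbb S^1$. So there is no gap at all, and $f^k(A)$ contains all but finitely many fibres along each backward orbit.

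\emph{Some point not attracted to $M$.} The paper builds an arc $L\subset f^j(A)$ meeting $K$ with $L\subset f^m(L)$. It does this by pigeonholing over arcs in the iterates of $A$ that join $K$ to a disjoint periodic orbit or cycle of graphs, which the late iterates of $A$ meet or accumulate on. Then $J=\overline{\bigcup_n f^{nm}(L)}$ is an $f^m$-invariant continuum containing a point not attracted to $M$, so Lemma~\ref{lem:contains_circumferential} yields $K\subset J$. Since $f^{nm}(L)$ increases and lies in $f^{j+nm}(A)$, every $y\in K$ is approached by all late iterates.

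This route avoids any fibre-by-fibre control of rotating gaps, which is exactly what your approach cannot supply.
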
	
\begin{proof}
	If $\omega_f(z)=M$ for all $z\in A$ then, for some $k\geq 0$, $f^k(A)\subset K$ by Lemma~\ref{lem:circ}.
 Then necessarily $D=K$
	and $\mathbb{S}^1\in\omega_{\tilde R}(\phi(f^k(A)))$, where $\phi$ is an almost conjugacy {(see Remark~\ref{rem:cir-rot})}.
	Since $R$ is isometry, $\phi(f^k(A))=\mathbb{S}^1$.
This means that for each $x\in K$, $f^k(A)$ contains all but at most finitely many elements in the backward orbit of $\phi^{-1}(\phi(x))$ and therefore $(f^n(A))\to K$ as $n\to\infty$.

	Now suppose that for some $z\in A$, $\omega_f(z)\neq M$.
	Let $p$ be the period of $K$. If we prove that for each component $C$ of $K$ and each $y\in C$, $\lim_n(y,f^{np}(A))=0$ then we are done.
	Since $D\in\omega_{\tilde{f}^p}(A)$, we may assume that $K$ is connected.
	Also note that there is some $n_0\geq 0$ such that $f^n(A)\cap K\neq\emptyset$ for all $n\geq n_0$ and that $\Per(f)\cap K=\emptyset$.	
	
	We claim that there is an arc $L$ and some $j\geq 0,m\geq 1$, such that $L\subset f^j(A)$, $L\subset f^m(L)$ { and $K\cap L\neq\emptyset$}.

{First, let us show that} if $\left(\bigcup_{n=0}^\infty f^n(A)\right)\cap \Per(f)=\emptyset$ then $\omega_f(z)$ is a circumferential set or a periodic orbit.
	Suppose on the contrary, that $\omega_f(z)$ is a basic or a solenoidal $\omega$-limit set. 
    {Note that $\omega_f(z)\cap K=\emptyset$ since $f\vert_K$ has the unique minimal set $M$  (recall~\cite[Corollary 2.2]{Shao}) and $M\cap\omega_f(z)=\emptyset$ {because maximal $\omega$-limit sets of different type never intersect}.}
	Since $f^n(A)\cap K\neq\emptyset$ for all $n\geq n_0$, $\omega_f(z)\cap K=\emptyset$ and since $\omega_f(z)$ is an infinite subset of $\overline{\Per(f)}$ in the case of basic set or there is again an infinite $Q_{min}\subset \omega_f(z)$ and $Q_{min}\subset\overline{Per(f)}$ in the case of solenoid, we easily see that there is some $n_1\geq 0$ such that $f^{n_1}(A)\cap\Per(f)\neq\emptyset$,
	{which leads to contradiction} with $\left(\bigcup_{n=0}^\infty f^n(A)\right)\cap \Per(f)=\emptyset$.

	
{We will divide the rest of the proof in two complementary cases.
We begin with the first of those two cases, namely that }either we have $f^{n_1}(A)\cap\Per(f)\neq\emptyset$ for some $n_1\geq 0$ or that 
$\omega_f(z)$ is a circumferential set. {In the former case let $K^\prime$ be the orbit of some periodic point from $f^{n_1}(A)$, while in the latter let $K^\prime$ be the minimal cycle of graphs containing $\omega_f(z)$}. In both cases, we can choose a (possibly degenerate) cycle of graphs $K^\prime$ such that $K^\prime\cap K=\emptyset$ and find some $n_1\geq 0$ such that $K^\prime\cap f^n(A)\neq\emptyset$ for all $n\geq n_1$.
	Now we consider a family of arcs $\{L_i\colon i\geq {n_1}\}$, contiguous to $K$ and $K^\prime$, such that $L_i\subset f^i(A)$, $L_{i+1}\subset f(L_i)$.
	There are finitely many such arcs so we can indeed find an arc $L$ and some $j\geq 0,m\geq 1$, such that $L\subset f^j(A)$ and $L\subset f^m(L)$.
	
	Now {let us consider the second case, that is, let us assume} that \[\left(\bigcup_{n=0}^\infty f^n(A)\right)\cap \Per(f)=\emptyset\] and that $\omega_f(z)$ is not circumferential.
	By what is proved above, $\omega_f(z)$ is a periodic orbit of period $r\geq 1$ so $\omega_{f^r}(z)=\{w\}$ for some point $w$ which is fixed under $f^r$.
	There is a sequence $\left(n_k\right)_{k\geq 1}$ of positive integers such that $\left(f^{n_k}(z)\right)_{k\geq 1}$  is a monotone sequence in an edge of $G$.
	Now we define a sequence $\left(L_{n_k}\right)_{k\geq 1}$ of arcs such that each $L_{n_k}$ intersects $K$ only at its endpoint, its other endpoint is $f^{n_k}(z)\in f^{n_k}(A)$ and $L_{n_{k+1}}\subset f^{n_{k+1}-n_k}(L_{n_k})$. 
	 Since $w\notin\left(\bigcup_{n=0}^\infty f^n(A)\right)$, by similar arguments as before we find an arc $L$ and some $j\geq 0,m\geq 1$, such that $L\subset f^j(A)$ and $L\subset f^m(L)$. {Indeed, the claim holds.}
	
	Now, $J=\overline{\bigcup_{n=0}^\infty f^{nm}(L)}$ is a continuum fixed under $f^m$, {$K\cap L\neq \emptyset$} and there is a point in $J$ with $\omega$-limit set different from $M$ hence, by Lemma~\ref{lem:contains_circumferential}, $K\subset J$.
	 By the construction, for all $y\in K$, $\lim_n\dist(y,f^{k+nm}(A))=0$ and, since $K$ is {a fixed  point of $\tilde f$} and $f$ is uniformly continuous, $\lim_n\dist(y,f^{n}(A))=0$, for all $y\in K$.
\end{proof}

The following is a trivial consequence of Lemma~\ref{lem:circumferential} and continuity of $f$.
\begin{corollary}\label{cor:alpha_circumferential}
Let $A\in\C(G)$ and let $D$ be a fixed point of $(\C(G),\tilde{f})$. 
	Suppose that there is some $x\in D$ with backward branch $\{x_j\}_{j\leq 0}$ in $D$ such that $\alpha(\{x_j\}_{j\leq 0})\subset D$ is a circumferential set.
	Then $\lim_n\dist(x,f^n(A))=0$.	
\end{corollary}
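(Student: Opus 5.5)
The plan is to reduce the statement to Lemma~\ref{lem:circumferential}, applied to the circumferential set $M':=\alpha(\{x_j\}_{j\leq 0})$. Since $\alpha$-limit sets of backward branches are closed and strongly invariant, and $M'$ is circumferential, we can write $M'=E(K,f)$ for some cycle of graphs $K$. The hypothesis $M'\subset D$ together with $\tilde f(D)=D$ is exactly the setting of Lemma~\ref{lem:circumferential}. That lemma then gives $\lim_n\dist(y,f^n(A))=0$ for every $y\in K$, and in particular for every $y\in M'$.

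Next we transfer this convergence back along the branch to $x$. Fix $\epsilon>0$. By uniform continuity of $f$ on the compact graph $G$, choose a strictly decreasing sequence $\{m_i\}$ of negative integers and a point $y\in M'$ with $x_{m_i}\to y$. Fix one index $m=m_i$ with $N:=-m\geq 1$, so that $f^{N}(x_m)=x_0=x$. By uniform continuity of $f^{N}$, there is $\delta>0$ such that $\dist(u,v)<\delta$ implies $\dist(f^N(u),f^N(v))<\epsilon$.

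The difficulty is that $x_m$ is only close to $y$, and Lemma~\ref{lem:circumferential} speaks about points of $K$, not about $x_m$. So the plan is to choose $i$ large enough that $\dist(x_m,y)<\delta/2$. Then, since $y\in K$, the lemma gives some $n_0$ with $\dist(y,f^n(A))<\delta/2$ for all $n\geq n_0$. Hence $\dist(x_m,f^n(A))<\delta$ for $n\geq n_0$, so there are points $a_n\in f^n(A)$ with $\dist(x_m,a_n)<\delta$. Applying $f^N$ yields $f^N(a_n)\in f^{n+N}(A)$ with $\dist(x,f^N(a_n))<\epsilon$. Therefore $\dist(x,f^n(A))<\epsilon$ for all $n\geq n_0+N$.

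The only subtlety is the order of choices: $\delta$ depends on $N$, while $N$ depends on how close $x_m$ is to $y$. This apparent circularity is resolved by noticing that the convergence holds for \emph{every} point of $K$, with no uniformity in $y$ needed. We fix $y$ first, and we do not need a uniform $\delta$ over all $N$; instead we use that $\dist(x_{m_i},y)\to0$ with $y$ fixed. This still has to be checked.

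A cleaner route avoids that issue: use $y'=x_m$ itself, if $x_m\in K$. Since $\alpha$-limit points lie in $M'\subset K$ and $K$ is a neighborhood-type set (a finite union of graphs), we might hope $x_m\in K$ for large $|m|$. That is not guaranteed, though, so I would keep the $\epsilon$–$\delta$ argument.

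The main point to verify is the ordering just described: choose $N$ from the branch, then $\delta$ from $N$ and $\epsilon$. One then needs a branch index with $|m|=N$ and $x_m$ within $\delta/2$ of $M'$. Since $N$ is fixed before $\delta$, this is circular. The fix I would try is to apply the first step to $y'=x_m$ directly. Points close to $M'$ along a backward branch eventually enter $K$: $M'$ is an $\alpha$-limit set inside the interior of the cycle $K$ relative to the branch, and $f^{-1}$-preimages near $M'$ stay in $K$ because $K$ is invariant and $E(K,f)$ is contained in it. If this holds, Lemma~\ref{lem:circumferential} applies to $x_m\in K$, and continuity of $f^N$ finishes the argument.
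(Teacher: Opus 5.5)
Starting from Lemma~\ref{lem:circumferential} is right, but you never establish the transfer from $K$ to $x$. Your first $\epsilon$--$\delta$ argument is genuinely circular, not just apparently so. Here $\delta$ is a modulus of continuity for $f^{N}$ with $N=-m$. Pushing $m$ further back to make $\dist(x_m,M')$ small increases $N$, so $\delta$ can shrink faster than $\dist(x_m,M')$. Having the convergence at every $y\in K$ does not help. It is even uniform on $K$, since $y\mapsto\dist(y,f^n(A))$ is $1$-Lipschitz, but what you need is $\dist(x_m,K)=0$, not merely small. Your fallback, that the branch enters $K$, is the right idea, but the reasons you give do not prove it. The identity $f(K)=K$ gives no control over preimages of points of $K$, and $M'$ need not lie in the interior of $K$. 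As written, the argument ends with an unproved ``if this holds''.

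The missing step is a finiteness argument.
\begin{enumerate}
\item Since $K$ is a finite union of subgraphs of $G$, its boundary $\partial K$ in $G$ is finite, and $\overline{G\setminus K}\cap K\subset\partial K$.
\item Suppose $x_j\notin K$ for every $j\leq 0$. Then $M'=\alpha(\{x_j\}_{j\leq 0})\subset\overline{G\setminus K}$.
\item Together with $M'\subset K$, this gives $M'\subset\partial K$, so $M'$ is finite. This contradicts the fact that circumferential sets are infinite by definition.
\item Hence $x_j\in K$ for some $j\leq 0$. Since $f(K)=K$, we get $x=f^{-j}(x_j)\in K$.
\end{enumerate}
Now Lemma~\ref{lem:circumferential} applies to $y=x$ itself and yields $\lim_n\dist(x,f^n(A))=0$ directly. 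No continuity argument along the branch is needed at all.
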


The following Lemma~\ref{lem:solenoid-components} is again a result from~\cite{Jelic} and will be used in the proof of Lemma~\ref{lem:alpha_solenoid}.
	
\begin{lemma}\label{lem:solenoid-components}
	If a nondegenerate continuum $A$ is recurrent and there is $x\in A$ such that $\omega_f(x)$ is contained in a maximal solenoid $\omega,$ then there is a cycle of graphs $K$ containing $\omega$ with the property that for each $y\in\omega\cap A,$ $A$ contains a component of $K$ to which $y$ belongs.
\end{lemma}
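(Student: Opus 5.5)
The first step is to show that $A$ is periodic for $\tilde f$. By Theorem~\ref{thm:rec-cont}, a nondegenerate recurrent continuum is either in $\Per(\tilde f)$ or in $\mathcal R$. Suppose $A\in\mathcal R_C$ for some component $C$ of a cycle of graphs $K'$ carrying a circumferential set $M'=E(K',f)$. Then $A\subset C\subset K'$. Since $f|_{K'}$ has the unique minimal set $M'$ (see \cite[Corollary 2.2]{Shao}), every $x\in A$ has $\omega_f(x)\supset M'$. By Theorem~\ref{thm:blokh-almost} this forces $\omega_f(x)=M'$. A circumferential set is not contained in a maximal solenoid, since maximal $\omega$-limit sets of different types never intersect. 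This contradicts the hypothesis, so $A$ is periodic.

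Let $p$ be the period of $A$. Cycles of graphs for $f$ give cycles of graphs for $f^p$ (splitting components if necessary), and the conclusion is stable under this passage. So I will replace $f$ by $f^p$ and assume $f(A)=A$. Then $\omega_f(x)\subset A$, and hence $Q_{min}\subset A$, because $Q_{min}=\omega(u)$ for every $u\in Q$.

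Let $K_1\supset K_2\supset\cdots$ be a generating sequence for $\omega$, with periods $p_n\to\infty$. The basic observation is the following. If $A$ were contained in a single component $C$ of $K_n$ with $p_n\geq 2$, then $A=f(A)\subset f(C)$, which is disjoint from $C$, a contradiction. Hence $A$ meets every component of $K_n$ that it intersects without being contained in it. The plan is to fix a bound $N$ on the number of points of $\partial A$ (finite, since $A$ is a subgraph) together with $\V(G)$. If a component $C$ of $K_n$ meets $A$ and contains no point of $\partial A$, then $C\subset A$, because $C$ is connected and does not cross the boundary of $A$. Since the components of $K_n$ are pairwise disjoint, at most $N$ of them contain a point of $\partial A$. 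So all but at most $N$ components of $K_n$ that meet $A$ lie entirely in $A$.

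The main obstacle is the finitely many \emph{bad} components, namely those containing a boundary point $b\in\partial A$, because the statement requires the property for \emph{every} $y\in\omega\cap A$. I would handle them by passing to deeper levels. If $b\notin\bigcap_n K_n$, then for large $n$ the point $b$ lies in no component of $K_n$. For a $b\in Q$, I would use the following argument. Let $y\in\omega\cap A$ lie in the component $C_m(b)$ of $K_m$ containing $b$. The sets $A\cap\omega$ and $\{C\colon C\subset A\}$ are both forward invariant, and $f^{p_m}$ maps $C_m(b)$ onto itself. Since $y\in\omega\subset\omega(f)$, the point $y$ is in the prolongation set of its sides. Using Lemma~\ref{lem:self-covering}-type self-covering arguments, the component $J$ of $A\cap C_m(b)$ containing $y$ is $f^{kp_m}$-periodic for some $k$ and meets several components of $K_{m'}$ for $m'\gg m$. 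The bad boundary point is thereby moved out of the relevant component at level $m'$. After finitely many refinements, at most $N$ of them, the resulting cycle $K=K_{m'}$ has the property that every component meeting $\omega\cap A$ is contained in $A$. I would check this last refinement step most carefully, since it is where the interplay between $\partial A$ and $Q$ is delicate.
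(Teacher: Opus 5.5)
Your first step is fine: ruling out $A\in\mathcal R$ via Theorem~\ref{thm:rec-cont}, so that $A\in\Per(\tilde f)$, works. So does the counting observation: a component of $K_n$ that meets $A$ but misses the finite set $\partial A$ lies in $A$. (The paper itself does not prove this lemma; it quotes it from \cite{Jelic}.)

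The genuine gap is your treatment of the ``bad'' components, and this is where the whole content of the lemma sits. What you write there is not an argument. If $b\in\partial A\cap Q$, then $b$ lies in some component of $K_{m'}$ for every $m'$, so it can never be ``moved out of the relevant component.'' Moreover, knowing that a component $J$ of $A\cap C_m(b)$ is self-mapped under a power of $f$ and meets several components of $K_{m'}$ says nothing about whether $C_{m'}(b)\subset A$. Yet that inclusion is exactly what the statement demands whenever $C_{m'}(b)\cap\omega\cap A\neq\emptyset$. You flag this step yourself as the one you would still need to check, and as proposed it cannot be completed. There is also a smaller issue: after passing to $f^p$ you assert $Q_{min}\subset A$, but only $\omega_{f^p}(x)\subset A$ is justified. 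Passing to $f^p$ also silently changes what ``cycle of graphs'' and ``maximal solenoid'' refer to, while the conclusion needs a cycle for $f$.

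The missing idea is that forward invariance propagates containment along a cycle, so bad components simply cannot occur at a deep enough level. You had noted that $\{C\colon C\subset A\}$ is forward invariant but did not exploit it.

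\begin{itemize}
\item Keep the $K_n$ as cycles for $f$. Let $p$ be the period of $A$, put $F=f^p$ (so $F(A)=A$), and let $N=|\partial A|$.
\item $F$ permutes the components of $K_n$ in cycles of length $q_n=p_n/\gcd(p,p_n)$, and $q_n\to\infty$.
\item Fix $n$ with $q_n>N$ and any $y\in A\cap K_n$. The $q_n$ pairwise disjoint components $F^j(C_n(y))$, $0\le j<q_n$, each contain $F^j(y)\in A$.
\item Hence at least one of them, say $F^j(C_n(y))$, misses $\partial A$, so it lies in $A$.
\item Then $C_n(y)=F^{q_n-j}(F^j(C_n(y)))\subset F^{q_n-j}(A)=A$.
\end{itemize}

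Thus $K=K_n$ works for all $y\in\omega\cap A$ at once, with no refinement and no use of $Q_{min}\subset A$.
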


\begin{lemma}\label{lem:alpha_solenoid}
Let $A\in\C(G)$ and
let $D$ be a nondegenerate fixed point of $(\C(G),\tilde{f})$. 
Let $w\in D$ be a point with backward branch $\{w_j\}_{j\leq 0}$ in $D$ such that $\alpha(\{w_j\}_{j\leq 0})\subset D$ is a minimal solenoid $Q_{min}$.
Then $\lim_n\dist(w,f^n(A))=0$.	
\end{lemma}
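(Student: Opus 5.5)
The plan is to mimic the proof of Lemma~\ref{lem:circumferential}, with Lemma~\ref{lem:solenoid-components} playing the role of Lemma~\ref{lem:contains_circumferential}. Since $Q_{min}=\alpha(\{w_j\}_{j\leq 0})$ is a minimal solenoid, it lies in $\bigcap_n K_n$ for a generating sequence of cycles of graphs $K_1\supset K_2\supset\cdots$ with periods $p_n\to\infty$. Because $w_j\in D$ accumulate on $Q_{min}$, we have $Q_{min}\subset D$. The intended conclusion is therefore: for every $\epsilon>0$ there is $N$ such that $w$ is $\epsilon$-close to $f^n(A)$ for all $n\geq N$. First fix $\epsilon$ and choose $n$ large so that every component of $K_n$ has diameter $<\epsilon$ in the part relevant to $w$. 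More precisely, use that $D\cap K_n$ consists of finitely many pieces, and pick $j$ very negative so that $w_j$ is in (or close to) a component $C$ of $K_n$ meeting $Q_{min}$. By uniform continuity of $f^{-j}$ (finitely many iterates), any set passing near $w_j$ at time $t$ passes near $w$ at time $t-j$. It therefore suffices to show that $f^t(A)$ comes within $\delta$ of $w_j$ for all large $t$.

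For that I would prove the analogue of the claim in Lemma~\ref{lem:circumferential}: there exist an arc $L$, integers $i\geq 0$ and $m\geq 1$ with $L\subset f^i(A)$, $L\subset f^m(L)$, and $J=\overline{\bigcup_k f^{km}(L)}$ a continuum fixed by $f^m$ that intersects $Q_{min}$ and contains a point whose $\omega$-limit set lies in the maximal solenoid. Since $D\in\omega_{\tilde f}(A)$ and $D\supset Q_{min}$, the sets $f^n(A)$ eventually meet every component of $K_n$ that meets $D$ in the solenoidal part (using Lemma~\ref{lem:self-covering}-type control and that $D$ is fixed). The arc $L$ is constructed exactly as before. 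Either $f^{n_1}(A)$ contains a periodic point, or some $\omega_f(z)$, $z\in A$, is a circumferential set, basic set, or periodic orbit disjoint from $K_n$ for large $n$. In these cases take arcs $L_i\subset f^i(A)$ joining $K_n$ to that other cycle $K'$ with $L_{i+1}\subset f(L_i)$, and pigeonhole over the finitely many "contiguous" arcs. If instead all of $A$ is attracted to the solenoid, the solenoid is the relevant maximal $\omega$-limit set and I would use the nondegeneracy of $D$ together with Lemma~\ref{lem:self-covering} to produce a self-covering subcontinuum inside $D$ directly.

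Having $J$ recurrent (indeed $f^m$-fixed) and nondegenerate with a point attracted to the solenoid, apply Lemma~\ref{lem:solenoid-components} to $J$ under $f^m$. This gives a cycle of graphs $K$ containing the solenoid such that $J$ contains each component of $K$ meeting $J\cap\omega$. Since $J$ meets $Q_{min}$ and $f^m$ permutes components, by minimality of $Q_{min}$ the union $\bigcup_{s<m}f^s(J)$ contains all components of $K$, hence a neighbourhood structure of $Q_{min}$ containing the $w_j$ for $j$ very negative (the $w_j$ approach $Q_{min}\subset K$, and one arranges $w_j\in K$ using $f(K)=K$ and the backward branch staying in $D$). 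Then, as in the end of Lemma~\ref{lem:circumferential}, $\lim_k \dist(w_j,f^{i+km+s}(A))=0$ for suitable residues $s$, and uniform continuity plus $f$-invariance of $K$ gives convergence along all $n$.

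The main obstacle is the case where every point of $A$ is attracted to the solenoid and no periodic point or foreign cycle is available to anchor the arc $L$. Here one cannot pigeonhole contiguous arcs between two disjoint cycles. My first attempt would be to use the periods $p_n\to\infty$: for $n$ large, $f^t(A)$ (being close to the nondegenerate $D$) cannot fit in a single component of small diameter. It must therefore contain a component of some $K_n$ or an arc crossing between components, which yields the self-covering arc via Lemma~\ref{lem:self-covering}.
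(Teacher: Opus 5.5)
There is a genuine gap, and it sits exactly where your analogy with Lemma~\ref{lem:circumferential} breaks down.

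\textbf{The bridging step does not reach the solenoid.} In the circumferential case, Lemma~\ref{lem:contains_circumferential} only needs the $f^m$-fixed continuum $J$ to \emph{touch} a component of $K$ in order to contain it. Lemma~\ref{lem:solenoid-components} is much weaker. It needs a point of $J$ attracted to $\omega=Q_{max}$, and it only yields components through points of $\omega\cap J$.

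Your arc $L$ is a bridge from $K_n$ to a foreign cycle $K'$. So $J=\overline{\bigcup_k f^{km}(L)}$ is only known to meet $K_n$. Your claims that $J$ meets $Q_{min}$ and contains a point attracted to the solenoid do not follow from the construction. Nothing prevents $J$ from being, say, an arc from $K'$ to a periodic point on the boundary of a component of $K_n$. Periodic points never lie in $Q=\bigcap_n K_n$, so then $J\cap\omega=\emptyset$ and Lemma~\ref{lem:solenoid-components} gives nothing.

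\textbf{The remaining case is not handled.} There you have no construction, and Lemma~\ref{lem:self-covering} cannot supply one. That lemma does not produce self-covering continua; it only says that any self-covering subcontinuum of an iterate of $A$ must lie in $D$. An arc of $f^t(A)$ joining two components of $K_n$ need not cover itself under $f^{p_n}$, because its image may come back through a different part of $G$.

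\textbf{The missing idea} is to use where $D$ sits relative to the cycles, rather than anchors outside the solenoid.
\begin{enumerate}
\item The paper chooses $r$ so that $K_r$ has at least $2m+1$ components ($m$ the number of edges). Then some component $K_r^1$ is an arc in the interior of an edge of $D$.
\item Since $D$ is an $\omega$-limit point of $f^{p_r-i+1}(A)$ under $\tilde f^{p_r}$, some iterate $f^{p_r-i+1+tp_r}(A)$ contains all but at most one component of a deeper level $K_{r'}\cap K_r^1$.
\item Suppose a component $C_2$ is missed, lying between contained components $C_1$ and $C_3$. Let $q_1,q_2$ be the extreme points of $\omega\cap C_2$, which lie in a further iterate. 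Let $L_1$ be the arc from $C_1$ to $q_1$ and $L_2$ the arc from $C_3$ to $q_2$.
\item Since $C_1,C_3$ are $f^{sp_r}$-invariant and $f^{sp_r}(q_l)\in\omega\cap C_2$, each $f^{sp_r}(L_l)$ contains $L_1$ or $L_2$.
\end{enumerate}
This gives self-covering arcs, under $f^{sp_r}$ or under $f^{2sp_r}$ in the alternating case. Their endpoints lie in $\omega$ and their orbit closures cover $\omega\cap C_2$. That is precisely the input Lemma~\ref{lem:solenoid-components} requires.

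Incidentally, this also shows your troublesome case is vacuous. The contained components $C_1,C_3$ are arcs mapped onto themselves by a power of $f$, so they contain periodic points.
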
	

\begin{proof}
Let us denote by $\{K_i\}_{i\geq 1}$ the nested sequence of cycles of graphs such that $Q=\bigcap_{i}K_i$, $Q_{min}=Q\cap\overline{\Per f}$ and denote $\omega=Q_{max}=Q\cap\omega(f)$.
	The idea of the proof is to show that there is a cycle of graphs $K\in\{K_i\}_{i\geq 1}$ such that for all $x\in K$, $\lim_n\dist(x,f^n(A))=0$.
	Since $f(K)=K$, it is enough to show that for some $p\geq 0$ and for all $x\in K$, $\lim_n\dist(x,f^{pn}(A))=0$. 
	For each $i\geq 1$, since $\alpha(\{w_j\}_{j\leq 0})=Q_{min}\subset K_i$, $Q_{min}$ is infinite and $K_i$ is invariant, we have that $w_j\in K_i$ for all {$j\leq 0$}. 
	Specially, $w_j\in K$ for all {$j\leq 0$} 
	and hence $\lim_n\dist(w,f^n(A))=0$.

Denote by $m$ the number of edges in $G$ and take some $r\geq 1$ such that $K_r$ has at least $2m+1$ components.
Then there is an edge $E$ in $G$ such that at least $3$ components of $K_r$ intersect it and, since those components are pairwise disjoint, there is a component of $K_r$ which is a subarc of $\Int E$.
	Denote by $K_r^1,K_r^2,...,K_r^{p_r}$ the components of $K_r$ in a way that $K_r^1\subset \Int E$, $f(K_r^i)=K_r^{i+1}$ for $1\leq i<p_r$ and $f(K_r^{p_r})=K_r^{1}$.
If, for each $i,\ 1\leq i\leq p_r$, we show that there is some $r_i\geq r$ such that for all $y\in K_r^i\cap K_{r_i}$,  $\lim_n\dist(y,f^{np_r}(A))=0$ then for $K$ we can take any $K\in\{K_i\}_{i\geq 1}$, $K\subset\cap_{i=1}^{p_r} K_{r_i}$ and the lemma will be proved.
	Therefore, take some $i,\ 1\leq i\leq p_r$ and let us find such $K_{r_i}$.
	Note that $f^{p_r-i+1}(K^i_r)=K_r^1$. If we find some $K_{r_i}$ such that for each $y\in K_{r_i}\cap K_r^1$, $\lim_n\dist(f^{p_r-i+1+np_r}(A),y)=0$ then, since $f^{i-1}(K_{r_i}\cap K_r^1)=K_{r_i}\cap K_r^i$, for each $y\in K_{r_i}\cap K_r^i$, we will have $\lim_n\dist(f^{np_r}(A),y)=0$ and will be done.
	
	Take some $r_i^\prime\geq p_r$ such that $K_{r_i^\prime}\cap K_r^1$ has at least three components and note that $K_{r_i^\prime}\cap K_r^1$ is fixed under $f^{p_r}$.
	Since $D\in\omega_{\tilde{f}^{p_r}}(f^{p_r-i+1}(A))$ and since $K_r^1$ is a subarc of interior of an edge of $D$, there is some $t\geq 0$ such that $f^{p_r-i+1+tp_r}(A)$ contains all but at most one component of $K_{r_i^\prime}\cap K_r^1$.
	If it contains all of its components then we are done since in that case $K_{r_i^\prime}\cap K_r^1\subset f^{p_r-i+1+np_r}(A)$ for all $n\geq t$ and we can set $r_i=r_i^\prime$.
	Now suppose that $f^{p_r-i+1+tp_r}(A)$ contains all but one component of $K_{r_i^\prime}\cap K_r^1$.
	After increasing $t$ if necessary, we can assume that a component of $K_{r_i^\prime}\cap K_r^1$
which is not contained in $f^{p_r-i+1+tp_r}(A)$, in the ordering of $E$, lies between some other two components of the same set which are contained in $f^{p_r-i+1+tp_r}(A)$. {This is possible since $f^{p_r}$ fixes $K_{r_i^\prime}\cap K_r^1$ and permutes its components (see~\cite[Lemma 10]{Snoha}).}

	Let us denote by $C_1,C_2,...,C_s$ the components of $K_{r_i^\prime}\cap K_r^1$ in a way that $C_1,C_2,C_3$ are subsequent in the ordering of $E$ and that $C_2\not\subset f^{p_r-i+1+tp_r}(A)$. Note that, for all $v,\ 1\leq v\leq s$, $f^{sp_r}(C_v)=C_v$.
	Denote by $q_1,q_2$ the minimal and the maximal point of $\omega$ in the ordering of $C_2$, in a way that $q_1$ lies between $C_1$ and $q_2$ in $E$.
Before continuing, let us show that, after increasing $t$ if necessary, we can suppose that $q_1,q_2\in f^{p_r-i+1+tp_r}(A)$.
Indeed, just like above, we can take some $r_i^{\prime\prime}\geq r_i^\prime$ such that $K_{r_i^{\prime\prime}}\cap C_2$ consists of at least three components and since  $D\in\omega_{\tilde{f}^{sp_r}}(f^{p_r-i+1+tp_r}(A))$, for some $t^\prime$, $f^{p_r-i+1+tp_r+t^\prime sp_r}(A)$ contains all but at most one component of $K_{r_i^{\prime\prime}}\cap C_2$.
Then after increasing $t^\prime$ if necessary, we can assume that $\{q_1,q_2\}\subset f^{p_r-i+1+tp_r+t^\prime sp_r}(A)$.
Indeed, we can suppose that for $t$ defined as above we also have $\{q_1,q_2\}\in f^{p_r-i+1+tp_r}(A)$.

Let $L_1$ be the arc contained in 
 $K_r^1$ intersecting $C_1$ in its endpoint and having $q_1$ as its other endpoint.
Analogously, let $L_2$ also be the arc contained in 
$K_r^1$, but intersecting  $C_3$ in its endpoint and having $q_2$ as its other endpoint, {see Figure~\ref{fig:solenoid}}.  
Note that $L_1\cup L_2\subset f^{p_r-i+1+tp_r}(A)$.
Note that $q_l\neq f^{sp_r}(q_l)\in\omega\cap C_2$ for $l\in\{1, 2\}$ and $C_1,C_3$ are fixed under $f^{sp_r}$. 
This means that $f^{sp_r}(L_1)$ either contains $L_1$ or $L_2$ and $f^{sp_r}(L_2)$ either contains $L_1$ or $L_2$.
If for some $1\leq l\leq 2$ we have $L_l\subset f^{sp_r}(L_l)$ then we are done since in that case $L=\overline{\bigcup_{n=1}^{\infty}f^{nsp_r}(L_l)}$ is a continuum fixed under $f^{sp_r}$ and containing $C_2\cap \omega$ so it contains $K_{r_i}\cap C_2$ for some $r_i\geq r_i^\prime$ by Lemma~\ref{lem:solenoid-components}. This would mean that for all $y\in K_{r_i}\cap K_r^1$, 
$\lim_n\dist(f^{p_r-i+1+tp_r+nsp_r}(A),y)=0$
 and, since $K_{r_i}\cap K_r^1$ is fixed under $f^{p_r}$ and $C_k\subset f^{p_r-i+1+tp_r}(A) $ for $k\neq 2$, $\lim_n\dist(f^{p_r-i+1+np_r}(A),y)=0$ for all $y\in K_{r_i}\cap K_r^1$, proving the lemma.

\begin{figure}[H]
\includegraphics[width=\textwidth]{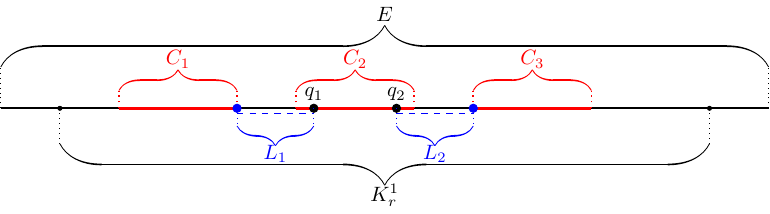}
\caption{Continua $C_i$ from Lemma~\ref{lem:alpha_solenoid}.}\label{fig:solenoid}
\end{figure}

 Therefore, we yet have to consider the remaining case, i.e. that $L_2\subset f^{sp_r}(L_1)$ while $L_1\subset f^{sp_r}(L_2)$.
 This would mean that $L_1\subset f^{2sp_r}(L_1)$ and $L_2\subset f^{2sp_r}(L_2)$.
	Analogously to previous case, we define \[L=\overline{\bigcup_{n=1}^{\infty}f^{2nsp_r}(L_1)} \text{ and } L^\prime=\overline{\bigcup_{n=1}^{\infty}f^{2nsp_r}(L_2)}.\]
	Both $L$ and $L^\prime$ are continua fixed under $f^{2sp_r}$,  $f^{sp_r}(L)=L^\prime$ and therefore $L\cup L^\prime\supset C_2\cap\omega$ {since $f^{sp_r}(L\cup L^\prime)=L\cup L^\prime$.}
	Therefore, we can find some (by taking the smaller of two provided by $L$ and $L^\prime$) cycle $K_{r_i}$ for $r_i\geq r_i^\prime$, such that $L\cup L^\prime\supset K_{r_i}\cap K_r^1$.
	Also, for each $y\in L\cup L^\prime,$ 
 $\lim_n\dist(f^{p_r-i+1+tp_r+2nsp_r}(A),y)=0$ so, because $C_k\subset f^{p_r-i+1+tp_r}(A)$ for $k\neq 2$, 
$\lim_n\dist(f^{p_r-i+1+tp_r+2nsp_r}(A),y)=0$ for each $y\in K_{r_i}\cap K_r^1$.
Since $K_{r_i}\cap K_r^1$ is fixed under $f^{p_r}$, $\lim_n\dist(f^{p_r-i+1+np_r}(A),y)=0$ for all $y\in K_{r_i}\cap K_r^1$ which finishes the proof for the remaining case.
\end{proof}

The following Lemma~\ref{lem:growing_arc} provides an auxiliary result which is used in the proofs of Lemmas~\ref{lem:periodic} and~\ref{lem:alpha_periodic}.

\begin{lemma}\label{lem:growing_arc}
	Let $A\in \C(G)$ and
let $D$ be a nondegenerate fixed point of $(\C(G),\tilde{f})$. 
	Furthermore, let $m$ be the number of edges in  $G$ and let $z\in D$ be a fixed point with a side $T$ such that all the $T$-sided neighborhoods of $z$ are contained in $D$.
	 Suppose that $f^n(A)$ does not contain any $T$-sided neighborhood for any $n\geq 0$.
	Also, suppose that there is some fixed point $y\in A$.
	Then, for each $\epsilon>0$ there is a point $x\in B(z,\epsilon)$, contained in an element of $T$ and some $t\geq 0,\ 1\leq k\leq 2m2^m$, such that 
	{$x\in f^{t+nk}(A)$} for all $n\geq 0$.
\end{lemma}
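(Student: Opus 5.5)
We argue by continuity and a pigeonhole on combinatorial data. Since $D\in\omega_{\tilde f}(A)$ and $D$ contains all $T$-sided neighborhoods of $z$, for every $\eta>0$ there are infinitely many $n$ with $\dist_H(f^n(A),D)<\eta$. For such $n$, $f^n(A)$ comes within $\eta$ of every point of the $T$-sided arc $J$ of $z$ of length $\epsilon$, where $J$ lies in an edge and is chosen shorter than the distance from $z$ to other vertices. Because $f^n(A)$ is connected and contains no $T$-sided neighborhood of $z$, its trace on $J$ misses a small segment adjacent to $z$. Otherwise $f^n(A)\cap \overline{J}$ would contain a neighborhood of $z$ in $J$, since a connected set containing points near $z$ on $J$ and also $z$ itself must contain the arc between them. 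Hence $f^n(A)$ must reach points of $J$ close to $z$ ``from the far side''. It contains a subarc $I_n\subset J$ whose endpoint nearest $z$ lies within $\eta$ of $z$, and which extends at least to the point of $J$ at distance $\epsilon/2$ from $z$. The fixed point $y\in A$ gives $y\in f^n(A)$ for all $n$, so $f^n(A)$ contains a path from $y$ to $I_n$.

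The key step is to encode how such an arc entering $J$ is attached to the rest of $f^n(A)$. Given $y\in f^n(A)$, the continuum $f^n(A)$ reaches $J$ through one of the at most $m$ edges, along a path in $G$ from $y$. Record for each $n$ the set of edges (a subset of the $m$ edges, so $2^m$ possibilities) that $f^n(A)$ meets in a way leading into the side $T$. Record also which of the (at most $2m$) directions the arc enters $J$ from. There are at most $2m2^m$ such ``states'', and by pigeonhole two good times $n_1<n_2$ with $n_2-n_1=k\le 2m2^m$ share a state. Using $f(y)=y$ and connectedness, we then want to show that the arc $I_{n_1}$, or a slightly shorter arc $L\subset f^{n_1}(A)\cap J$ ending near $z$, satisfies $L\subset f^{k}(L)$ up to including a common point $x$. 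More precisely, the path in $f^{n_1}(A)$ from $y$ through $L$ is mapped by $f^k$ onto a continuum containing $y$ and reaching into $J$ at least as close to $z$. By the matching state it must cover the same portion of $J$ beyond a point $x$ near $z$. Iterating the covering $L\subset f^{k}(L')$ with $L'$ containing $y$ gives $x\in f^{t+nk}(A)$ for all $n\ge0$ with $t=n_1$.

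The main obstacle is the covering step. We must show that $f^k$ maps the relevant subcontinuum of $f^{t}(A)$ onto something containing a fixed point $x$ of $J$ near $z$, while controlling that the image does not approach $z$ from the other side. For this I would first try an intermediate value argument along the arc: $f^k$ fixes $y$ and $z$. The image of the path from $y$ to the near end of $L$ stays away from a full $T$-neighborhood, by hypothesis. The image also, by choice of the second good time, enters $J$ from the same edge and direction and comes within $\eta$ of $z$. So it must pass over every point of $J$ between distance $\eta$ and $\epsilon/2$ from $z$. Take $x$ at distance, say, $2\eta<\epsilon$ from $z$. The choice of $\eta$ via uniform continuity of $f^j$ for $j\le 2m2^m$ ensures the relevant points stay in the prescribed positions for all intermediate iterates. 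Induction on $n$ then propagates $x\in f^{t+nk}(A)$.
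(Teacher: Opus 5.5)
Your proposal has a genuine gap, and it is the covering step you flag yourself. Matching ``states'' of $f^{n_1}(A)$ and $f^{n_2}(A)$ is information about the two sets. What you need is a covering relation for one specific subcontinuum. For your path $P\subset f^{n_1}(A)$ from $y$ to $I_{n_1}$, the image $f^k(P)$ is just some subcontinuum of $f^{n_2}(A)$ through $y$. Nothing forces it to re-enter $J$ near $z$, let alone along the same route.

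Your intermediate value argument gives $f^k(P)$ properties that only $f^{n_2}(A)$ was chosen to have. Continuity at the fixed point $z$ only puts $f^k$ of the near end of $P$ somewhere close to $z$, possibly on another side. Even if $f^k(P)$ did follow the same route, two arcs of the same type from $y$ are nested, but you have no way to fix the direction of nesting. You need $P\subset f^k(P)$, not $f^k(P)\subset P$. Without a continuum $C\subset f^t(A)$ with $C\subset f^k(C)$, the final ``induction on $n$'' has nothing to propagate, since the conclusion must hold for every $n$, not only at times when $f^n(A)$ is close to $D$.

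Two smaller points. Pigeonholing among good times bounds $k$ by $2m2^m$ only if the times are consecutive; you would have to get that from continuity of $\tilde f$ at the fixed point $D$. Also, a connected set containing $z$ and points of $J$ near $z$ need not contain the arc between them when $G$ has loops, though the conclusion you draw from this follows from closedness anyway.

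The missing idea is to build the covering relation into the construction by following one orbit, and to force the direction of nesting by a first-entry time. The self-covering arc is then found at the beginning of the orbit, not at the times when $f^n(A)$ is near $D$. The paper's proof goes as follows.
\begin{enumerate}
\item Set $m'=2m2^m$ and let $K\in T$ have length $\delta<\epsilon$. Choose $\kappa<\delta$ so that $f^n(A)\cap B(z,\kappa)\cap K=\emptyset$ for $n\le 2m'$.
\item Let $t$ be the first time $f^t(A)$ meets $B(z,\kappa)\cap K$; it exists because $D\in\omega_{\tilde f}(A)$. Pick $y_0\in A$ with $y_t=f^t(y_0)$ there. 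Minimality of $t$ gives $y_i\notin f^n(A)$ for $n<i\le t$.
\item Since $f(y)=y$, there are arcs $L_i\subset f^i(A)$ from $y$ to $y_i$ with $L_{i+1}\subset f(L_i)$, so $L_{j+k}\subset f^k(L_j)$ is automatic.
\item Pigeonhole on the combinatorial type of the consecutive arcs $L_0,\dots,L_{m'}$ (last vertex and set of edges traversed). This gives $j$ and $k\le m'$ with $L_j$ and $L_{j+k}$ nested. Because $y_{j+k}\notin f^j(A)\supset L_j$, the nesting must be $L_j\subset L_{j+k}\subset f^k(L_j)$.
\item Push forward: $C=f^{t-j-k}(L_j)\subset f^{t-k}(A)$ satisfies $C\subset f^k(C)\ni y_t$. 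Hence $x=y_t\in f^{t+nk}(A)$ for all $n\ge 0$.
\end{enumerate}
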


\begin{proof}
Denote $m^\prime=2m2^m.$
	Take some $0<\delta<\epsilon$ such that there is a side $K\in T$ with length equal to $\delta$.
	Since $L\not\subset f^n(A)$ for all $L\in T$ and all $n\geq 0$, we can pick some ${0<\kappa<\delta}$
	such that \[\bigcup_{n=0}^{2m^\prime}f^n(A)\cap B(z,\kappa)\cap K=\emptyset.\]
	Let \[t=\min\{n\geq 0\colon f^n(A)\cap B(z,\kappa)\cap K\neq\emptyset\}>2m^\prime.\]
	 Note that above defined $t$ exists since $D\in\omega_{\tilde{f}}(A)$ and $K\subset D$. There is $y_0\in A$ such that ${f^t(y_0)\in B(z,\kappa)\cap K}$. 
	Denote $y_i=f^i(y_0)$ for $i\geq 1$. 
	Note that, by 
	{the definition of $t$,} for all $1\leq i\leq t$, $y_i\notin\bigcup_{n=0}^{i-1}f^n(A)$.
 
	There is a family $\{L_i\colon i=0,1,...,t\}$ of arcs, $L_i\subset f^i(A)$, each $L_i$ connecting $y$ and $y_i$, such that $L_{i+1}\subset f(L_i)$.
	For each such arc, there are no more than $2m$ possibilities for the last branching point in the arc, starting from $y$, i.e. branching point in $L_i$ closest to the $y_i$.
	There are also no more than $2^m$ possibilities for the set of edges contained in the arc $L_i$, that is, edges forming the arc, between $y$ and that last branching point.
	By 
	{the pigeonhole} principle and by the choice of points $y_i$, we obtain that for some $0\leq j\leq m^\prime$ and\linebreak $0<k\leq m^\prime$, $L_j\subset f^k(L_j)$.
 Note that $t-j-k\geq 0$ and denote \[C=f^{t-j-k}(L_j)\subset f^{t-k}(A).\]
 {We obtained \begin{align*}
     L_j&\subset f^k(L_j)\\
     f^{t-j-k}(L_j)&\subset f^{t-j-k}(f^k(L_j))=f^{k}(C)
 \end{align*} and
\[f^k(C)=f^k(f^{t-j-k}(L_j))=f^{t-j}(L_j)\ni f^{t-j}(y_j)=f^t(y_0).\]}
 	It follows that $C\subset f^k(C)\ni y_t.$
 	Therefore, \[y_t\in f^{t+nk}(A)\cap B(z,\kappa)\cap K\quad\text{for all}\quad n\geq 0\] and the lemma is proved.
 	\end{proof}

\begin{lemma}\label{lem:periodic}
	Let $A\in\C(G)$ and let $D$ be a fixed point of $(\C(G),\tilde{f})$. 
	Also, suppose that there is some fixed point $y\in A$.
	Let $x\in D$ be a periodic point.
Then $\lim_n\dist(x,f^n(A))=0$.	
\end{lemma}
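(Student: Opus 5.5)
Throughout I read the statement as in the surrounding lemmas, with $D\in\omega_{\tilde f}(A)$ (and, per the standing assumption, $D$ the unique periodic element of $\omega_{\tilde f^n}(A)$). The plan is first to reduce to a fixed point $x$, and then to show by contradiction that $x$ is approached by $f^n(A)$.

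\emph{Reduction.} Let $p$ be the period of $x$. Since $D$ is fixed by $\tilde f$ and $y$ is fixed by $f$, both remain fixed under $f^p$, and $D\in\omega_{\tilde f^p}(f^i(A))$ for each $0\le i<p$. If we know $\lim_n\dist(x,f^{i+np}(A))=0$ for every $i$, then $\lim_n\dist(x,f^n(A))=0$. So we may replace $f$ by $f^p$ and assume $x$ is fixed. If $D$ is degenerate, then $D=\{x\}$ and the claim is immediate from $D\in\omega_{\tilde f}(A)$ together with the fixed point $y\in f^n(A)$ for all $n$. Indeed, the limit points of $(f^n(A))$ then all contain $y$, which forces $y=x$, and $x\in f^n(A)$ for all $n$. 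Hence assume $D$ is nondegenerate.

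\emph{Main case.} Suppose $x\in f^n(A)$ fails for infinitely many $n$; otherwise we are done. Choose $\epsilon>0$ small so that $B(x,\epsilon)$ is a star neighbourhood, i.e.\ a union of one-sided arcs, one for each side. Since $D\in\omega_{\tilde f}(A)$ and $D$ contains $x$, for infinitely many $n$ the set $f^n(A)$ comes close to $x$. The connected set $f^n(A)$ also contains the fixed point $y$. There are two situations.
\begin{enumerate}[(a)]
\item Some side $T$ of $x$ with all $T$-sided neighbourhoods in $D$ has an element contained in some $f^{n_0}(A)$. Then $f^{n_0}(A)$ contains $x$ (by closedness) together with a $T$-arc $J$ ending at $x$. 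Pass to the sub-arc $J'\subset J$ and $k$ with $J'\subset f^k(J')$, using that $f^{n}(A)$ near $D$ must again cover that arc. Then $x\in f^{n_0+nk}(A)$ for all $n$. Combined with uniform continuity and $f(x)=x$, this gives convergence along all residues.
\item No such side exists. Then Lemma~\ref{lem:growing_arc} applies to each side $T$ of $x$ inside $D$ (such a side exists since $D$ is nondegenerate and connected). It yields points $x_\epsilon\in B(x,\epsilon)$ and $t,k$ with $k\le 2m2^m$ such that $x_\epsilon\in f^{t+nk}(A)$ for all $n$.
\end{enumerate}

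\emph{Conclusion of case (b).} The key point is that $k$ is bounded independently of $\epsilon$. So $\dist(x,f^{t+nk}(A))<\epsilon$ for all $n\ge0$, with $k$ ranging over a finite set. Let $N=(2m2^m)!$. Then every residue class modulo $N$ is eventually covered after applying finitely many iterates $f^j$, $j<k$, and uniform continuity at the fixed point $x$ keeps $\dist(x,f^{t+nk+j}(A))$ small. This gives $\limsup_n\dist(x,f^n(A))\le\eta(\epsilon)$, where $\eta(\epsilon)\to0$, and hence the claim.

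\emph{Main obstacle.} I expect the delicate step to be exactly this passage from the subsequence $t+nk$ to all $n$. Naive continuity controls only $f^j$ for $j<k$ with $\delta$ depending on $k$. The uniform bound $k\le 2m2^m$ is what rescues it: I would fix $\delta$ so that $f^j(B(x,\delta))\subset B(x,\eta)$ for all $j\le 2m2^m$, and then apply Lemma~\ref{lem:growing_arc} with $\epsilon=\delta$. Case (a), i.e.\ obtaining self-covering of a $T$-arc, would be handled similarly via a pigeonhole argument on arcs from $y$, as in the proof of Lemma~\ref{lem:growing_arc}.
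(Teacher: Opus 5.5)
Your proof is correct and is essentially the paper's argument: reduce to $f(x)=x$, observe that $x\in f^{n_0}(A)$ for a single $n_0$ already gives $x\in f^n(A)$ for all $n\ge n_0$, and otherwise choose $\delta$ with $f^i(B(x,\delta))\subset B(x,\epsilon)$ for $i\le 2m2^m$ and apply Lemma~\ref{lem:growing_arc} to get $\dist(x,f^n(A))<\epsilon$ for all large $n$. Your case (a) is vacuous under your own standing assumption, since $x\in f^{n_0}(A)$ and $f(x)=x$ force $x\in f^n(A)$ for all $n\ge n_0$; its unjustified self-covering sub-arc step can therefore be dropped, and your explicit treatment of degenerate $D$ is a harmless addition that the paper leaves implicit.
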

\begin{proof}
	Let $p\geq 1$ be a period of $x$ and denote by $B=\{x,f(x),...,f^{p-1}(x)\}$ its orbit.
	If we prove that for some $r\geq 1$ and every $z\in B$, $\lim_n\dist(z,f^{nr}(A))=0$ then, by continuity of $f$ and by the fact that $f$ permutes elements of $B$, we get that for every $z\in B$ and every $0\leq k<r$ $\lim_n\dist(z,f^{nr+k}(A))=0$ which proves the statement.
	So let us take any $z\in B$ and replace $f$ with $f^p$, i.e. suppose $z$ is a fixed point and let us show that $\lim_n\dist(z,f^n(A))=0$.
	
	If, for some $n\geq 0$, $z\in f^n(A)$ then we are done so suppose the opposite, i.e. that $z\notin \bigcup_{n\geq 0}f^n(A)$.
	Specially, for each side $T$ of $z$, $f^n(A)$ does not contain any element of $T$ for any $n\geq 0$.

Take any $\epsilon>0$, denote by $m$ the number of edges in $G$ and set $m^\prime=2m2^m$.
Since $f(z)=z$, there is $\delta>0$ such that $f^i(B(z,\delta))\subset B(z,\epsilon)$ for all $1\leq i\leq m^\prime$.
	By Lemma~\ref{lem:growing_arc}  and since there is obviously some one-sided {neighborhood of $z$} contained in $D$, there is some $t\geq 0$ and $1\leq k\leq m^\prime$ such that $f^{t+nk}(A)\cap B(z,\delta)\neq\emptyset$ for each $n\geq 0$.
 	This implies that for each $n\geq t$, $\dist(z,f^n(A))<\epsilon$, proving the lemma.
\end{proof}

\begin{lemma}\label{lem:alpha_periodic}
	Let $A\in\C(G)$ and let $D$ be a fixed point of $(\C(G),\tilde{f}).$ 
	Also, suppose that there is some fixed point $y\in A$.
	Let $x\in D$ be a  non-periodic point with backward branch $\left(x_j\right)_{j\leq 0}$  in $D$ such that  $\alpha\left(\left(x_j\right)_{j\leq 0}\right)\subset D$ is a periodic orbit.
Then there is $n_0\in\mathbb{N}$ such that $x\in f^n(A)$ for all $n\geq n_0$.
\end{lemma}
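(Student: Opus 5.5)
We first reduce to the case where the periodic orbit is a fixed point. Let $P=\alpha((x_j)_{j\le 0})$ have period $p$. The $x_j$ eventually accumulate on $P$ along a single point $z\in P$ when we pass to every $p$-th index. By Lemma~\ref{lem:iterates}-type reasoning, it suffices to prove the statement for $f^p$, the continua $f^i(A)$, $0\le i<p$, and the backward branch $(x_{jp})_{j\le0}$ of $x$ under $f^p$. Indeed, if $x\in f^{i+np}(A)$ for all large $n$ and every residue $i$, we are done. Note that $y$ remains fixed and $D$ remains fixed under $f^p$. So assume $\alpha((x_j))=\{z\}$ with $f(z)=z$. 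Then $x_j\to z$ as $j\to-\infty$, and since $x$ is not periodic, $x_j\ne z$ for all $j$. Passing to a subsequence, the $x_j$ approach $z$ through one side $T$ of $z$. Because $D$ is a continuum containing all $x_j$ and $z$, and $D$ is a subgraph (nondegenerate, since $x\neq z$), some $T$-sided neighborhood of $z$ lies in $D$. We may also choose a short arc $V\in T$ and note that infinitely many $x_j$ lie in $V$.

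We split into two cases according to whether some $f^n(A)$ contains an element of $T$.

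\emph{Case 1: some $f^{n_1}(A)$ contains a $T$-sided neighborhood $V'$ of $z$.} Then $V'$ contains $x_{-j}$ for some $j\ge 1$. Also $z\in f^{n_1}(A)$, and $z$ is fixed, so $z\in f^n(A)$ for all $n\ge n_1$. The arc $[z,x_{-j}]\subset V'$ is mapped by $f^{j}$ onto a continuum containing $z$ and $x$. We then want to show that $[z,x_{-j}]\subset f^{n}(A)$ for all large $n$, which gives $x\in f^{n+j}(A)$. Here we use that $x_{-j}\to z$. Pick $j$ so large that $x_{-j}$ lies very close to $z$ inside $V'$. By continuity at the fixed point $z$ and the self-covering structure, the sets $f^n(A)\cap V'$ should contain $[z,x_{-j}]$ from some time on. Otherwise the endpoint of $f^n(A)\cap V'$ near $z$ shrinks toward $z$ along $T$ infinitely often. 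This is incompatible with Lemma~\ref{lem:self-covering} applied to an arc $B\subset f^{n_1}(A)$ with $B\subset f^k(B)$ (obtained from $V'$ and the backward branch), combined with $D\in\omega_{\tilde f}(A)$ forcing $f^n(A)$ to be Hausdorff-close to $D\supset V'$.

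\emph{Case 2: no $f^n(A)$ contains an element of $T$.} We invoke Lemma~\ref{lem:growing_arc} with this $z$ and $T$, which applies since $y\in A$ is fixed. For $\epsilon$ smaller than the distance from $z$ to $x$ and to $x_{-1}$, it gives a point $w\in B(z,\epsilon)$ in an element of $T$, and $t,k$ with $w\in f^{t+nk}(A)$ for all $n$. Inspecting its proof, we in fact get an arc $C\subset f^{t-k}(A)$ with $C\subset f^k(C)$ that joins $y$ to $w$. By Lemma~\ref{lem:self-covering}, $C\subset D$. Choose $\epsilon$ so that some $x_{-j}$ lies between $z$ and $w$ on the $T$-side; if instead $w$ lies between $z$ and every nearby $x_{-j}$, shrink $\epsilon$ and repeat. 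The aim is to show that the union $\bigcup_n f^{nk}(C)$, an increasing continuum containing $y$, reaches arbitrarily close to $z$ along $T$. Hence it eventually contains some $x_{-j}$, and then $x=f^j(x_{-j})\in f^{t+nk+j}(A)$ for all $n$. Repeating this for each residue class modulo $k$, using $k\le 2m2^m$ bounded and continuity, yields $x\in f^n(A)$ for all large $n$.

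\textbf{Main obstacle.} The key difficulty is passing from ``$x\in f^{n}(A)$ along an arithmetic progression $t+nk$'' to ``for all $n\ge n_0$''. We handle it by applying the construction to each shifted continuum $f^i(A)$, $0\le i<k$. Each is still attracted to the fixed $D$, but its fixed point is now $f^i(y)=y$, so Lemma~\ref{lem:growing_arc} still applies to it.
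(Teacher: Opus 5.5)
Your reduction to the case of a fixed point $z$ is fine. There is, however, a genuine gap, and it sits exactly at the step you call the main obstacle.

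\textbf{The residue step.} Running your construction on each shifted continuum $f^i(A)$ only tells you that, for every $i$, the set $S=\{n\geq 0\colon x\in f^n(A)\}$ contains \emph{some} progression $\{i+t_i+nk_i\colon n\geq 0\}$. You do not control the offsets $t_i$. All of these conclusions are compatible with $S$ being, say, the set of even integers, so they never force $S$ to contain all large $n$.

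A single side $T$ of $z$ cannot fix this either. If near $z$ the map exchanges two sides, the backward branch alternates between them. Then the indices $j$ with $x_{-j}$ on $T$ lie in one residue class, and $T$ only ever delivers $x$ along one residue class.

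The paper gets around this as follows:
\begin{enumerate}
\item It treats simultaneously all sides $T_1,\dots,T_t$ of $z$ that contain infinitely many $x_j$.
\item It produces a common $m$ and $k\le t!$ (later $c$ and $d\le l$) for which every such side is of type (a), (b) or (c).
\item It exhibits a block of \emph{consecutive} branch points $x_b,\dots,x_{b+k-1}$ lying in $f^{m+nk}(A)$ for all large $n$.
\end{enumerate}
Since $x_{b+k-1}=f^{w}(x_{b+k-1-w})\in f^{m+nk+w}(A)$ for $0\le w<k$, every residue is covered at once, and $x\in f^n(A)$ for all large $n$ follows.

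\textbf{Case 1 is not carried out.} Lemma~\ref{lem:self-covering} only yields $B\subset D$, which is no contradiction here, since $V'\subset D$ anyway. What the situation actually gives comes from following the orbit of a deep $x_{j_1}$ until it passes a shallower $x_{j_2}$ on the same side, inside a small star around $z$. This produces a one-sided neighborhood $L$, possibly on a different side of $z$, with $\overline{L}\subset f^{k_1}(\overline{L})$. Hence $L\subset f^{m_1+nk_1}(A)$ only along a progression. That puts you back in the residue problem above, rather than giving $[z,x_{-j}]\subset f^n(A)$ for all large $n$.

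\textbf{Case 2 has the geometry reversed.} No $f^n(A)$ contains an element of $T$. So a continuum containing both the point $w$ from Lemma~\ref{lem:growing_arc} and the fixed point $y$ must contain the part of that side running from $w$ \emph{away} from $z$, out to the boundary of a small ball around $z$. The branch points you capture are therefore those farther from $z$ than $w$. This is why the paper chooses $\epsilon_2$ below an annulus $B(z,\epsilon_1)\setminus B(z,\epsilon_2)$ that contains the consecutive block. You do not need $\bigcup_n f^{nk}(C)$ to approach $z$, and you give no argument that it does.
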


\begin{proof}
	The goal is to show that there is some $s\leq 0$ such that $x_s\in f^n(A)$ for all but at most finitely many $n\geq 0$.
	Denote \[\alpha\left(\left(x_j\right)_{j\leq 0}\right)=\{z_0,z_1,...,z_{p-1}\}\] and take some $z\in \alpha((x_j)_{j\leq 0})$.

 Note that for each $0\leq i<p$ there is some $\epsilon_i>0$ such that $x_{j_1},x_{j_2}\in B(z_i,\epsilon_i)$ implies $j_1\equiv j_2\pmod{p}$. 
	Let us renumerate the elements of $\{z_0,z_1,...,z_{p-1}\}$ in a way that \[x_{j}\in B(z_i,\epsilon_i)\quad \text{implies}\quad j\equiv i\pmod{p}.\]
	It is enough to prove that for each $0\leq i<p$ there is some $x_{j_i}\in B(z_i,\epsilon_i)$ and some $n_i$ such that $x_{j_i}\in f^{np}(A)$ for all $n\geq n_i$.
	This would imply that $x\in f^{np+i}(A)$ for all but finitely many $n$.
	Showing that such property holds for all $0\leq i<p$ would prove the lemma.
	This gives us right to assume, without loss of generality, that $\alpha\left(\left(x_j\right)_{j\leq 0}\right)=\{z\}$ for some fixed point $z\in G$, since all the assumptions of the lemma hold if we consider $f^p$ instead of $f$.
Under this assumption, by Lemma~\ref{lem:periodic} we obtain that $\lim_n\dist(z,f^n(A))=0$.	
		
	For $t^\prime$ being the {valence} of $z$, let us choose and numerate the sides $\{T_1,T_2,...,T_{t^\prime}\}$ of $z$ in a way that their elements either contain infinitely many elements of $\left(x_j\right)_{j\leq 0}$ or none of them.  
As a consequence, there is $t,\ 1\leq t\leq t^\prime$ such that the elements of $T_i$ contain infinitely many elements of $\left(x_j\right)_{j\leq 0}$ if and only if $i\leq t$.

%
%

	In what follows, we will present a procedure which will, in finitely many steps, provide us with some $m\geq 0$ and some $k$, $1\leq k\leq t!$, such that, for each $i\leq t$, exactly one of the following three properties holds:
\begin{enumerate}[(a)]	
	\item\label{lem_alph_per_first} there is a $T_i$-sided neighborhood contained in $f^{m+nk}(A)$ for all $n\geq 0$, 
	\item the case \eqref{lem_alph_per_first} does not hold but there is a $T_i$-sided neighborhood $K$ such that, for each $y\in K$, $y\in f^{m+nk}(A)$ for all but at most finitely many $n\geq 0$ or	
	\item $f^{m+nk}(A)$ does not contain any element of $T_i$ for any $n\geq 0$
 and, moreover, for all small enough one-sided neighborhoods $K\in T_i$,  $f^{m+nk}(A)\cap K=\emptyset$ for infinitely many $n\geq 0$.
	\end{enumerate}		

First suppose that there are some $s\geq 0$, some $i\leq t$ and some $K_1\in T_i$ such that\linebreak $K_1\subset f^s(A)$.
	We claim that there are some $j\leq t$, $m_1\geq s$, $k_1\leq t$ and some $L_1\in T_j$ such that $L_1\subset f^{m_1}(A)$ and $\overline{L_1}\subset f^{k_1}(\overline{L_1})$.
	Indeed, denote $\kappa=\dist(z,\Br(G)\setminus\{z\})$ and take $\delta>0$ such that $f^r(B(z,\delta))\subset B(z,\kappa)$ for $0\leq r\leq t!$.
	Since $x_j \to z$ as $j\to-\infty$, there are $j_1<j_2$ such that \[x_{j_1},x_{j_2}\in K_1\cap B(z,\delta)\quad\text{and}\quad\dist(z,x_{j_1})<\dist(z,x_{j_2}).\]
	Let $f^{i_1}(x_{j_1})$ be the first iterate of $x_{j_1}$ not contained in $\overline{zx_{j_1}}$. 
	Then by the construction \[\overline{zf^{i_1}(x_{j_1})}\subset f(\overline{zx_{j_1}})\subset B(z,\kappa).\]
	Now let $f^{i_2}(x_{j_1})$ be the first iterate of $x_{j_1}$ not contained in $\overline{zx_{j_1}}$ nor in $\overline{zf^{i_1}(x_{j_1})}$.
	Then \[\overline{zf^{i_2}(x_{j_1})}\subset f^{s_1}(\overline{zx_{j_1}})\subset B(z,\kappa)\] and \[\overline{zf^{i_2}(x_{j_1})}\subset f^{s_2}(\overline{zf^{i_1}(x_{j_1}}))\subset B(z,\kappa)\] for some $1\leq s_1,s_2\leq 2$.
	We continue the procedure and, since we have finitely many sides and since there is $x_{j_2}\notin \overline{zx_{j_1}}$, after finitely many steps we find $j,m_1,k_1$, where $k_1\leq t$, $L_1\in T_j$  and $L_1\subset f^{m_1}(A)$ such that $\overline{L_1}\subset f^{k_1}(\overline{L_1})$ and hence $L_1\subset f^{m_1+nk_1}(A)$ for all $n\geq 0$.

	We proceed with another side. In that spirit, suppose that there is some $i^\prime\leq t$, $i^\prime\neq j$ and some $K_2\in T_{i^\prime}$ such that $K_2\subset f^{m+k_1s^\prime}(A)\cap B(z,\delta)$ for some $s^\prime$.
 We will now prove that either we can find some $k_2\leq t-1,m_2\geq 0,j^\prime\neq j$ and $L_2\in T_{j^\prime}$ such that
	 $L_2\subset f^{m_1+m_2 k_1}(A)$ and $\overline{L_2}\subset f^{k_1k_2}\left(\overline{L_2}\right)$ 
	 or that for every
	 $w\in K_2$, $w\in f^{m_1+nk_1}(A)$ for all but at most finitely many $n\geq 0$.

 First suppose that, among the sides of $z$, there is a one-sided neighborhood $N$ of $z$ contained in $f^{m_1+nk_1}(A)$ for all $n\geq 0$ with property $\overline{N}\subset f^{k_1}(\overline{N})$ and which contains infinitely many elements of $(x_j)_{j\leq 0}$ with the indexes congruent modulo $k_1$ to indexes of infinitely many elements of $\{x_j\colon j\leq 0\}\cap K_2$.
 	Then, if there is a one-sided neighborhood $K^\prime\subset K_2$ and $v\geq 0$ such that $K^\prime\subset f^{vk_1}(\overline{N})$ then \[K^\prime\subset f^{m_1+nk_1}(A)\quad\text{for all}\quad n\geq v\] and we are done by setting $j^\prime=i^\prime$ and $L_2= K^\prime$.
 	On the other hand, if there is no such $T_{i^\prime}$-sided neighborhood, we easily see that 
 	each $w\in K_2$ is contained in $f^{n{k_1}}(N)\subset f^{m_1+nk_1}(A)$ for all but finitely many $n\geq 0$.	
 	Indeed, first note that $K_2\subset B(z,\kappa)$ so  $K_2\cap N=\emptyset$.
 	For each $w\in K_2$, there are some $v_1,v_2\geq 0$, $v_1<v_2$, congruent modulo $k_1$ such that $x_{v_1}\in N$ and $x_{v_2}\in \overline{zw}$.
 	Since, for all $v\geq0$, $f^{vk_1}(\overline{N})$ does not contain any element of $T_{i^\prime}$, we see that \[w\in f^{m_1+nk_1}(A)\quad\text{for all}\quad n\geq (v_2-v_1)/k_1.\]

	Now we suppose the other case, i.e. that there 
	does not exist any
	one-sided neighborhood $N$ of $z$ contained in $f^{m_1+nk_1}(A)$ for all $n\geq 0$ with property $\overline{N}\subset f^{k_1}(\overline{N})$ and which contains infinitely many elements of $(x_j)_{j\leq 0}$ with the indexes congruent modulo $k_1$ to indexes of infinitely many elements of $\{x_j\colon j\leq 0\}\cap K_2$.
We imitate the procedure we did for the first side considered, i.e. we take some
	 \[x_{j_1^\prime},\ x_{j_2^\prime}\in K_2\subset B(z,\delta),\quad \dist(z,x_{j_1^\prime})<\dist(z,x_{j_2^\prime}),\] such that $j_1^\prime<j_2^\prime$ and $j_1^\prime$ is congruent to $j_2^\prime$ modulo $k_1$.
	 Also, we choose them in a way that only finitely many elements of $(x_j)_{j\leq 0}$ with indexes congruent to $j_1,j_2$ are contained in one-sided neighborhoods of $z$ which lie in $f^{m_1}(A)$ and are self-covering under $f^{k_1}$.
	Furthermore, suppose that we have decreased $\delta$ in a way that $\bigcup_{n=0}^{t}f^{nk_1}(B(z,\delta))$ does not contain those finitely many elements of $(x_j)_{j\leq 0}$ which are described above.
	This way we find some $k_2\leq t-1,m_2,j^\prime\neq j$ and $L_2\in T_{j^\prime}$ such that
	 \[L_2\subset f^{m_1+m_2 k_1}(A)\quad\text{and}\quad\overline{L_2}\subset f^{k_1k_2}\left(\overline{L_2}\right).\]

	 After finitely many steps of this procedure and, if necessary, renumeration of the sides of $z$, we find some $t^{\prime\prime},m\geq 0$ and some $k$, $1\leq k\leq t!$, such that, for $i\leq t^{\prime\prime}$, $f^{m+nk}(A)$ either contains some $L_i\in T_i$ for all $n\geq 0$ or there is some $L_i\in T_i$ such that for all $w\in L_i$, $w\in f^{m+nk}(A)$ for all but at most finitely many $n\geq 0.$ 
	 At the same time, $f^{m+nk}(A)$ does not contain any element of $T_j$ for any $n$ if $t^{\prime\prime}<j\leq t$.

	We can divide the rest of the proof into two cases, depending on whether $t^{\prime\prime}=t$ or\linebreak $t^{\prime\prime}<t$.
	Let us first consider the former, i.e. that there are some $m\geq 0$ and $1\leq k\leq t!$ such that for each side $T_i$ whose elements contain infinitely many elements of the backward branch $(x_j)_{j\leq 0}$ either:
	\begin{enumerate}
	\item $f^{m+nk}(A)$ contains some $L_i\in T_i$ for all $n\geq 0$, or
	\item each point of some $L_i\in T_i$ lies in $f^{m+nk}(A)$ for all but at most finitely many $n\geq 0$.
\end{enumerate}	
	Then there is some $b\leq 0$ and a set $B=\{x_b,x_{b+1},...,x_{b+k-1}\}$ of $k$ consecutive elements of the backward branch such that $B\subset f^{m+nk}(A)$ for all but at most finitely many $n\geq 0$.
	 This implies that $x_{b+k-1}\in f^{n}(A)$ for all except for maybe finitely many $n\geq 0$.
	Therefore, there is some $n_0\geq 0$ such that $x\in f^n(A)$ for all $n\geq n_0$ and we are done with that case.
	
	Now let us consider the case where $t^{\prime\prime}<t$, i.e. $f^{m+kn}(A)$ does not contain any element of the sides $T_i$, $i=t^{\prime\prime}+1,t^{\prime\prime}+2,...,t$  for any $n\geq 0$. We also allow that $t^{\prime\prime}=0$ in this case.
	
	If $t^{\prime\prime}>0$, denote by $\epsilon_1>0$ the minimum over all lengths of $L_i$ for $i=1,2,...,t^{\prime\prime}$. 
	On the other hand, if $t^{\prime\prime}=0$, set $\epsilon_1=\kappa$.
	 Denote by $e$ the number of edges in $G$ and set $l=k(2e2^e)^{t-t^{\prime\prime}}$.
	 We can pick some $\epsilon_2<\epsilon_1$ such that $B(z,\epsilon_1)\setminus B(z,\epsilon_2)$ contains some set $C$ with $l$ consecutive elements of $(x_j)_{j\leq 0}$, 
none of them contained in the elements of sides $T_i$, $i> t$.
	 Let us begin with a side $T_{t^{\prime\prime}+1}$.
By Lemma~\ref{lem:growing_arc}, there are some $t_1\geq 0$ and some $r_1\leq 2ke2^e$ such that \[f^{t_1+nr_1}(A)\cap B(z,\epsilon_2)\cap\left(\cup T_{t^{\prime\prime}+1}\right)\neq\emptyset\quad\text{for all}\quad n\geq 0.\]
	Now we consider $f^{r_1}$ and continue with side $T_{t^{\prime\prime}+2}$, finding some $t_2\geq 0$, $r_2\leq k(2e2^e)^2$ such that \[f^{t_2+nr_2}(A)\cap B(z,\epsilon_2)\cap\left(\cup T_{t^{\prime\prime}+1}\right)\neq\emptyset\] and \[f^{t_2+nr_2}(A)\cap B(z,\epsilon_2)\cap\left(\cup T_{t^{\prime\prime}+2}\right)\neq\emptyset\] for all $n\geq 0$.
	We proceed by considering $f^{r_1r_2}$.
	After finitely many steps we find some $c\geq 0$ and some $1\leq d\leq l$ such that $C\subset f^{c+nd}(A)$ for all $n\geq 0$.
	{Since $d\leq l$ and since $C=\{x_{b_1},x_{b_1+1},\ldots,x_{b_1+l-1}\}$ for some $b_1<0$, we have that $x_{b_1+l-1}\in f^{c+nd+w}(A)$ for all $n\geq 0$ and all $w$ such that $0\leq w\leq d-1$.
 We conclude that $f^n(A)$ contains $x$ for all but at most finitely many $n\geq 0$.}	  
	\end{proof}

In what follows, we will state Lemma~\ref{lem:alpha_basic} which covers the case of points whose alpha limit set is an infinite subset of a basic set.
	In its proof we are going to use already known results from the following two auxiliary lemmas.
	As first we state Lemma~\ref{lem:modelBS} which	provides us with a model map which is often used when dealing with basic sets.
	The reader is referred to \cite{B1,B2} for more details, see also \cite{Snoha,Forys}. 	
	
\begin{lemma}\label{lem:modelBS}
	Let $f\colon G\rightarrow G$ be a graph map and $X\subseteq G$ be a cycle of graphs. Suppose that $D(X)$ is a basic set. 
	Then there is a transitive map $g\colon Y\rightarrow Y$, where $Y$ is a cycle of graphs $Y_0,\ldots Y_{n-1}$ with possibly non-empty intersection in the endpoints, and $\phi\colon X\rightarrow Y$ which almost conjugates $f|_{D(X)}$ and $g$.
	 Moreover, $g^n|Y_i$ is mixing, for $i=0,\ldots,n-1$.
	  The period $n$ of $Y$ is a multiple of the period of $X$ and $Y_i\cap Y_j=\End(Y_i)\cap \End(Y_j)\neq \emptyset$ iff $i\neq j$ and $i$ and $j$ are congruent modulo the period of $X$.	
\end{lemma}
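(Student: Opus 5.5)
This lemma is essentially Blokh's model for basic sets (\cite{B1,B2}), and I would obtain it in two stages. First I would produce the almost conjugacy using Theorem~\ref{thm:blokh-almost}. Then I would split the model space into mixing pieces using the structure theory of transitive graph maps with periodic points.

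\textbf{Step 1 (the almost conjugacy).} Apply Theorem~\ref{thm:blokh-almost} with $K=X$. Here $M=E(X,f)=D(X)$ is infinite, so we obtain a transitive $g\colon Y\to Y$ on a finite union of graphs and a semi-conjugacy $\phi\colon X\to Y$ that almost conjugates $f|_{D(X)}$ and $g$. Since $\phi$ is continuous and onto, each component $X_a$ of $X$ (with $a$ taken modulo the period $q$ of $X$) maps onto a continuum $\phi(X_a)$, and $g$ permutes these continua cyclically. Because $D(X)$ is basic, $\Per(f)\cap X\neq\emptyset$. The image under $\phi$ of an $f$-periodic point is $g$-periodic, so $g$ has periodic points.

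\textbf{Step 2 (splitting into mixing pieces).} I would invoke the standard structure theorem for transitive graph maps with periodic points (Blokh; see also \cite{Snoha}). It gives an integer $n$ and subgraphs $Y_0,\dots,Y_{n-1}$ covering $Y$ such that $g(Y_i)=Y_{i+1 \bmod n}$, each $g^n|_{Y_i}$ is mixing, and distinct $Y_i$ meet only in finitely many points. To get $q\mid n$, I would argue as follows. Each $Y_i$ is connected, and it cannot meet two distinct sets $\phi(X_a)$ except in finitely many points, because $g^n|_{Y_i}$ is transitive and $\phi(X_a)$ is $g^q$-invariant. It follows that each $Y_i$ lies in a single $\phi(X_a)$. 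Compatibility with the cyclic permutations then forces $q\mid n$, with $Y_i\subset\phi(X_a)$ exactly when $i\equiv a \pmod q$.

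\textbf{Step 3 (the intersection pattern).} This is the main obstacle. The sets $\phi(X_a)$ are connected and each is the union of the pieces $Y_i$ with $i\equiv a\pmod q$. Hence two such pieces must be chained through common points, which gives $Y_i\cap Y_j\neq\emptyset$ for congruent $i\neq j$. I would obtain this by choosing the decomposition (or, if necessary, renormalizing the model) so that the pieces inside a fixed $\phi(X_a)$ are arranged cyclically around common points. Conversely, pieces in different $\phi(X_a)$ are disjoint once we arrange that the sets $\phi(X_a)$ are pairwise disjoint. I would ensure this by taking $Y$ to be the disjoint union of the $\phi(X_a)$, since Theorem~\ref{thm:blokh-almost} is applied componentwise.

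To show the common points are endpoints of both pieces, suppose $y\in Y_i\cap Y_j$ were not an endpoint of $Y_i$. Then a small arc of $Y_i$ around $y$ has points on both sides of $y$. I would use that $y$ is $g^n$-periodic, being a finite $g^n$-invariant set of intersection points, together with mixing of $g^n|_{Y_i}$ and $g^n|_{Y_j}$. The aim is to show that iterates of a one-sided neighbourhood of $y$ in $Y_j$ would eventually have to cover part of $Y_i$, which is incompatible with $Y_i\cap Y_j$ being finite. This last argument is the delicate step; if it does not go through directly, I would fall back on the explicit construction in \cite{B2}, where $Y$ is built by collapsing the complementary intervals of $D(X)$, and read the endpoint property off that construction.
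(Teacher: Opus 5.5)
The paper gives no proof of this lemma; it quotes Blokh's model for basic sets \cite{B1,B2}.

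Your Steps~1--2 recover the part of the statement that really follows from Theorem~\ref{thm:blokh-almost} and the splitting theorem for transitive graph maps with periodic points. That part is: the model $g$, the mixing pieces, $q\mid n$, and $Y_i\cap Y_j=\emptyset$ for $i\not\equiv j\pmod q$. Three small fixes:
\begin{itemize}
\item Theorem~\ref{thm:blokh-almost} is applied once to $K=X$, not componentwise. You do not need componentwise application: each fibre $\phi^{-1}(y)$ is connected, so it lies in one component of $X$, and therefore the sets $\phi(X_a)$ are automatically pairwise disjoint.
\item Each connected $Y_i$ then lies in a single $\phi(X_a)$ for trivial reasons, so no transitivity argument is needed.
\item The splitting theorem should be applied to $g^q|_{\phi(X_0)}$, a transitive map of a connected graph, and the pieces then pushed forward by $g$.
\end{itemize}

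The genuine gap is Step~3, and it cannot be closed as planned, because the two clauses you try to prove there are too strong.

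First, connectedness of $\phi(X_a)$ only gives a chain of pieces meeting consecutively. It does not give $Y_i\cap Y_j\neq\emptyset$ for every congruent pair. You also have no freedom to ``choose the decomposition'' or ``renormalize'' the model. When $D(X)=X$, the condition $\phi^{-1}(y)\cap M=\partial\phi^{-1}(y)$ forces every fibre to have empty interior, so $\phi$ is a homeomorphism. For example, let $f$ on $X=\mathbb{R}/\mathbb{Z}$ map each quarter arc $[i/4,(i+1)/4]$ onto the next one, acting in local coordinates as the piecewise linear three-lap map $h$ of $[0,1]$ with slopes $\pm3$, $h(0)=0$, $h(1)=1$. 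Then $D(X)=X$. Since $f^4$ is exact on each quarter arc, any $f^n$-invariant subcontinuum whose interior meets a quarter arc contains it. Finiteness of pairwise intersections then forces $n=4$, with the quarter arcs as pieces. Opposite arcs are disjoint, although all indices are congruent modulo the period $1$ of $X$.

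Second, your endpoint argument has no mechanism. Since $g^n(Y_j)=Y_j$, the $g^n$-iterates of a one-sided neighbourhood in $Y_j$ never enter $Y_i$. Also, intersection points are a priori only eventually periodic, not periodic. A counterexample: on the triod $[-1,1]\times\{0\}\cup\{0\}\times[0,1]$, let $f$ swap the horizontal and vertical arcs, fixing $(0,0)$, with $f^2$ exact on each arc. Again $\phi$ must be a homeomorphism and the pieces are forced. They meet at $(0,0)$, which is not an endpoint of the horizontal arc.

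So the clauses ``$Y_i\cap Y_j\neq\emptyset$ for all congruent $i\neq j$'' and ``$Y_i\cap Y_j=\End(Y_i)\cap\End(Y_j)$'' cannot be derived, and deferring to \cite{B2} does not supply them. What your argument does establish is that $Y_i\cap Y_j$ is finite, and empty unless $i\equiv j\pmod q$.
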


	The proof of Lemma~\ref{lem:basicset-sub} which comes next as well as of the following Corollary \ref{cor:basicset-sub} can be found in the preceding paper (~\cite[Lemma 3.5, Corollary 3.6]{Jelic}).

\begin{lemma}\label{lem:basicset-sub} 
	Assume that a continuum $A$ is recurrent and $\omega=\omega_f(x)$ 
	is a basic set for some $x\in G$. 
	Denote by $K_1,K_2,...,K_m$ minimal (in the sense of inclusion) graphs such that $\omega\subset \bigcup_i K_i$, $f$ permutes graphs $K_i$
	and $m$ is such that $f^m|_{K_i}$ is almost conjugated to a mixing map.
	If $\Int A\cap \omega \cap K_i\neq\emptyset$ then $K_i\subset A$.
\end{lemma}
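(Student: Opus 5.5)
The statement is Lemma~\ref{lem:basicset-sub}: if $A$ is recurrent for $\tilde f$, $\omega$ is a basic set, and $\Int A\cap\omega\cap K_i\neq\emptyset$, then $K_i\subset A$. I will prove it by passing to the mixing model of Lemma~\ref{lem:modelBS} and using recurrence of $A$ to block any growth of $A$.

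\textbf{Step 1: the model.} Replace $f$ by $f^m$. This is legitimate because $A$ is recurrent under $\tilde f$, hence recurrent under $\tilde f^{m}$ (recurrence passes to powers). After this replacement $K_i$ is $f$-invariant, and $f|_{K_i}$ is almost conjugate, via $\phi$, to a mixing graph map $g\colon Y_i\to Y_i$. Fix $x_0\in\Int A\cap\omega\cap K_i$.

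\textbf{Step 2: an interval around $x_0$ inside $A$.} Since $x_0\in\Int A$, there is a small open arc $U\ni x_0$ with $U\subset A\cap K_i$. We have $x_0\in\omega$, and $\omega\cap\phi^{-1}(y)=\partial\phi^{-1}(y)$. This forces $\phi(U)$ to be a nondegenerate arc $V$ in $Y_i$: otherwise $U$ would lie inside one fibre $\phi^{-1}(y)$, and then $x_0$, being in the interior of that fibre, could not be a point of $\omega$.

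\textbf{Step 3: mixing makes the images of $A$ eventually cover $K_i$.} Because $g$ is a mixing graph map, for every nondegenerate arc $V$ there is $N$ with $g^n(V)=Y_i$ for all $n\ge N$. This is the standard "mixing graph maps are locally eventually onto" fact, possibly after discarding finitely many points near endpoints; if needed I would shrink $V$ to avoid that issue.

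Lifting back, $\phi(f^n(U))=Y_i$ for $n\ge N$. Each $f^n(U)$ is connected and contains points of every fibre. Every fibre $\phi^{-1}(y)$ is a connected set whose boundary lies in $\omega$, so a connected set meeting all fibres in $K_i$ must contain all of $\omega\cap K_i$, except possibly a bounded number of boundary points of the fibres it does not fully enter.

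By Theorem~\ref{thm:blokh-almost}, $K_i$ is the smallest graph containing $\omega\cap K_i$. Hence the convex hull of $\omega\cap K_i$ is $K_i$, and $f^n(A)\supset f^n(U)$ is at least Hausdorff-close to $K_i$. More precisely: given $\varepsilon>0$, for large $n$ the set $f^n(A)$ contains all of $K_i$ except an $\varepsilon$-neighbourhood of at most finitely many fibres. These fibres are the wandering intervals that map onto their images only in the limit.

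\textbf{Step 4: recurrence pulls the covering back to $A$ (main obstacle).} By recurrence there are $n_k\to\infty$ with $f^{n_k}(A)\to A$ in $\dist_H$. From Step 3 it follows that $A\supset \overline{\omega\cap K_i}$ up to a small error. Closedness of $A$ and connectedness of $K_i$ (as the hull) then give $K_i\subset A$, provided the exceptional fibres are handled.

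The delicate point is exactly those exceptional fibres: nondegenerate fibres $\phi^{-1}(y)$ that are not contained in $f^n(U)$, for instance the end of a gap interval. For them I would argue as follows. Each such fibre has both of its boundary points in $\omega$, and they are approached from both sides by points of $\omega$ lying in $f^{n}(A)$. Since $f^{n}(A)$ is connected and $G$ has no small loops, the arc between these two boundary points must lie in $f^{n}(A)$ unless $f^n(A)$ ends inside the fibre. At most two fibres can play this role, namely those containing the endpoints of $f^n(A)$ on each edge, so their number is bounded by the number of endpoints.

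Taking limits along $n_k$, the Hausdorff limit $A$ then contains $K_i$ minus at most finitely many open subarcs lying inside fibres that touch $\partial A$. To finish, I would use that $x_0\in\Int A$ and apply Step 3 to $U$ itself with large $n$. If some such fibre arc were missing from $A$, its boundary point $b\in\omega$ would be a boundary point of $A$. But then, applying Step 2 to a point of $\Int A\cap\omega$ near $b$ shows that the images of $A$ eventually cross the fibre of $b$ entirely. That contradicts $f^{n_k}(A)\to A$ together with the fact that this fibre arc stays at a definite distance from $A$.
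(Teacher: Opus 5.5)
\paragraph{Verdict: genuine gap.} Your Steps~1 and~2 are fine, but Steps~3--4 do not close. What your argument can actually deliver is $\omega\cap K_i\subset A$. The passage from there to $K_i\subset A$ rests on claims that are false for graphs.

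\textbf{The hull claim is false.} $K_i$ need not be the ``hull'' of $\omega\cap K_i$, and Theorem~\ref{thm:blokh-almost} says nothing of the sort. $K_i$ may carry nondegenerate fibres of $\phi$ that meet $\omega$ in a single point. Take $f\colon[0,2]\to[0,2]$ with $f(x)=4x$ on $[0,\tfrac12]$, $f(x)=4(1-x)$ on $[\tfrac12,1]$ and $f\equiv 0$ on $[1,2]$. The basic set is the Cantor set of points whose orbits stay in $[0,\tfrac14]\cup[\tfrac34,1]$, so $\omega\subset[0,1]$. Yet the minimal cycle is $K=[0,2]$, because $f([0,1])=[0,2]$.

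The hair $[1,2]$ is the fibre over the endpoint $1$ of the tent-map model. It has a single boundary point, so your closing argument (two boundary points approached from both sides, images eventually ``crossing'' the fibre) does not apply to it. It gets covered only because $[1,2]=f([\tfrac14,\tfrac34])$ is the image of a crossed fibre. In general, reaching such peripheral fibres is exactly where $\omega=E(K,f)$ (i.e.\ $P_K(x,f)=K$) and the minimality of $K_i$ must be used. You use these only through the false hull claim.

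\textbf{Loops break the crossing argument.} If $K_i$ contains a simple closed curve, a connected set can contain both boundary points $b,b'$ of a fibre arc $F$, and all of $\omega\cap K_i$, while missing $\Int F$; take $K_i\setminus\Int F$. ``No small loops'' does not stop $f^n(A)$ from joining $b$ to $b'$ around the big loop, so crossing a fibre does not mean containing it. Hence the uniform statement Step~4 needs --- that $f^n(U)$ is $\varepsilon$-dense in $K_i$ for \emph{every} large $n$ --- is never established.

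\textbf{Mixing is not locally eventually onto.} Mixing only gives $g^n(V)\supset Y_i\setminus N(\End(Y_i),\varepsilon)$ for large $n$. Shrinking $V$ cannot upgrade this to $g^n(V)=Y_i$.

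\paragraph{The missing idea.} Use recurrence only through a \emph{self-covering arc}; this removes any need for control at each individual time. The paper does not reprove this lemma (it is quoted from \cite{Jelic}), but this is exactly the mechanism of Lemmas~\ref{lem:self-covering} and~\ref{lem:alpha_basic}.
\begin{enumerate}[(1)]
\item Work inside a small arc around $x_0$ contained in $A\cap K_i$. Since $\omega$ is perfect and $\omega\subset\overline{\Per(f)}$, pick periodic points of a common period bounding two disjoint arcs $L_0,L_1$ in one edge, each with infinitely many points of $\omega$ in its interior.
\item The edge-ordering argument gives $r\in\{0,1\}$ and $N$, a multiple of $m$, with $L_r\subset f^N(L_r)$.
\item Then $f^{jN}(L_r)\subset f^{nN}(L_r)\subset f^{nN}(A)$ for all $n\ge j$. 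Recurrence of $A$ under $\tilde f^{N}$ therefore yields $A\supset L:=\overline{\bigcup_{j\ge 0}f^{jN}(L_r)}$. This $L$ is a continuum with $f^N(L)=L$ whose interior meets $\omega\cap K_i$.
\item For such an invariant $L$, only the \emph{union} of the $f^N$-iterates of a neighbourhood $U\subset L$ of a point $x\in\omega\cap K_i$ matters. So $K_i\subset L$ reduces to the prolongation property $P_{K_i}(x,f^N)=K_i$ of points of the basic set. This comes from Blokh's theory, using that the model $g^{N/m}$ is still mixing, and it handles hairs and gaps on loops automatically.
\end{enumerate}
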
	

\begin{corollary}\label{cor:basicset-sub} 

	 Assume that a nondegenerate continuum $A$ is recurrent and $\omega=\omega_f(x)$ 
is a basic set for some $x\in G$.
	Denote by $K_1,K_2,...,K_r$ {the sugraphs provided by Lemma~\ref{lem:modelBS}, i.e.} minimal (in the sense of inclusion) graphs such that $\omega\subset \bigcup_i K_i$, $f$ permutes graphs $K_i$
	and $r$ is such that $f^r|_{K_i}$ is almost conjugated to a mixing map.
	If $A\cap\omega\cap K_i\neq\emptyset$ then either $K_i\subset A$
or  $A\cap\omega\cap K_i$ is finite set and each $y\in A\cap\omega\cap K_i$ is eventually periodic and 
moreover, $A\cap K_i\cap \omega$ contains a periodic point from orbit of each such $y.$
\end{corollary}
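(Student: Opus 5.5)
The plan is to derive the corollary directly from Lemma~\ref{lem:basicset-sub}, splitting into two cases according to whether $A\cap\omega\cap K_i$ meets the interior of $A$. The only extra inputs are that a subcontinuum of a graph has finite boundary and that a nondegenerate recurrent continuum meeting a basic set is periodic for $\tilde f$.

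Fix $i$ with $A\cap\omega\cap K_i\neq\emptyset$. If $\Int A\cap\omega\cap K_i\neq\emptyset$, Lemma~\ref{lem:basicset-sub} gives $K_i\subset A$ and we are done. So assume $\Int A\cap\omega\cap K_i=\emptyset$. Then $F:=A\cap\omega\cap K_i\subset\partial A$. Since $A$ is a subgraph of $G$, its boundary in $G$ is finite, with cardinality bounded in terms of the number of vertices and edges of $G$. Hence $F$ is finite.

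Next I will show that $A$ is periodic for $\tilde f$, using Theorem~\ref{thm:rec-cont}. Since $A$ is nondegenerate and recurrent, either $A\in\Per(\tilde f)$ or $A\in\mathcal R$. Suppose $A\in\mathcal R_C$, so $A\subset C$ for a component $C$ of a cycle of graphs $K$ with circumferential set $M=E(K,f)$. Take $y\in A\cap\omega\subset K$. Then $\omega_f(y)$ lies in $K$, and since $f|_K$ has $M$ as its unique minimal set (\cite[Corollary 2.2]{Shao}), we get $\omega_f(y)\supset M$. But $\omega_f(y)\subset\omega$ because $\omega$ is closed and invariant, so $M\subset\omega$. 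This contradicts the fact that maximal $\omega$-limit sets of different types do not intersect (alternatively, $\omega\subset\overline{\Per(f)}$ while $K$ contains no periodic points). Hence $\tilde f^p(A)=A$ for some $p\geq1$.

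Now let $q=pr$, where $r$ is the integer from Lemma~\ref{lem:modelBS}, so that $f^q(K_i)=K_i$. We also have $f^q(A)=A$ and $f(\omega)=\omega$. Therefore $f^q(F)\subset A\cap\omega\cap K_i=F$, so $f^q$ maps the finite set $F$ into itself. Every point $y\in F$ is then eventually periodic under $f^q$, and hence under $f$. Moreover, the periodic $f^q$-cycle eventually reached by $y$ lies inside $F$, so $A\cap K_i\cap\omega$ contains a periodic point from the orbit of $y$.

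The main obstacle is the step excluding $A\in\mathcal R$, since it needs the separation between circumferential and basic sets. Everything else is bookkeeping with the finite set $F$.
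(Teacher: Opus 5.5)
Your argument is correct as a deduction from the results quoted in this paper, but it takes a different route. The paper gives no proof here and simply cites \cite{Jelic} (Corollary 3.6 there, the companion of Lemma~\ref{lem:basicset-sub}).

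Your key move is to invoke the classification of recurrent continua, Theorem~\ref{thm:rec-cont}. You rule out $A\in\mathcal R$ via the unique minimal set of $f|_K$ and so upgrade ``recurrent'' to ``periodic''. After that, $f^{pr}$ maps the finite set $F=A\cap\omega\cap K_i\subset\partial A$ into itself, and the conclusion is a pigeonhole argument. If you use the alternative contradiction via $\omega\subset\overline{\Per(f)}$, note that it needs a point of $M$ in $\Int K$. This holds because $M$ is infinite and $\partial K$ is finite.

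What your route buys is brevity. It also makes clear what happens for periodic $A$, which is the only case where the corollary is applied in this paper (Theorems~\ref{thm:periods} and~\ref{thm:center-charact}). There the statement follows at once from Lemma~\ref{lem:basicset-sub} and finiteness of boundaries, with no need for Theorem~\ref{thm:rec-cont}.

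What it costs is independence. If, as its placement next to Lemma~\ref{lem:basicset-sub} in \cite{Jelic} suggests, the corollary is one of the ingredients used there to prove Theorem~\ref{thm:rec-cont}, then your argument would be circular as a proof in that paper. It would only show that the corollary is consistent with, and cheaply recovered from, the classification. A self-contained proof for merely recurrent $A$ has to get eventual periodicity of the points of $F$ directly from the recurrence $f^{n_k}(A)\to A$. That is exactly the nontrivial step your shortcut bypasses.
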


\begin{lemma}\label{lem:alpha_basic}
Let $A\in\C(G)$ and
let $D$ be a nondegenerate fixed point of $(\C(G),\tilde{f})$. 
Let $w\in D$ be a point with backward branch $\{w_j\}_{j\leq 0}$  in $D$ such that $\alpha(\{w_j\}_{j\leq 0})$ is infinite and contained in a basic set  $\omega=E(K,f)\subset D$.
Then $\lim_n\dist(w,f^n(A))=0$.	
\end{lemma}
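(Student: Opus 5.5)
Following the pattern of Lemmas~\ref{lem:circumferential} and~\ref{lem:alpha_solenoid}, I would aim to find a subgraph $K_i$ of the basic set structure (Lemma~\ref{lem:modelBS}, Corollary~\ref{cor:basicset-sub}) such that every point of $K_i$ is a limit of points of $f^{nr}(A)$ along the appropriate residue classes. Then every point of the backward branch lying in $K_i$ is approached, and the argument at the start of Lemma~\ref{lem:alpha_solenoid} transfers this to $w$.

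First I reduce. Let $K_1,\dots,K_r$ be the minimal subgraphs from Corollary~\ref{cor:basicset-sub} with $f^r|_{K_i}$ almost conjugate to a mixing map. By continuity of $f$ and since $f$ permutes the $K_i$, it suffices to show that for some $i$ and some residue $s$ we have $\lim_n \dist(y,f^{s+nr}(A))=0$ for all $y\in K_i$. Since $\alpha(\{w_j\})$ is an infinite subset of $\omega$, infinitely many $w_j$ lie arbitrarily close to $\omega\cap K_i$. Pushing $w_j$ forward by the uniformly continuous $f$ then gives $\lim_n\dist(w,f^n(A))=0$. Replacing $f$ by $f^r$ and $A$ by $f^s(A)$, I may assume $r=1$ and $K_i=K_1$ is invariant, with $f|_{K_1}$ almost conjugate via $\phi$ to a mixing graph map $g$.

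Next I use that $D\in\omega_{\tilde f}(A)$ and $\omega\cap K_1\subset D$. Since $\omega$ is perfect and $K_1\cap\omega$ is infinite, $D$ contains an arc $J\subset K_1$ whose interior meets $\omega$ in infinitely many points. Choose a small subarc $J'\subset J$ with $\Int J'\cap\omega\neq\emptyset$ and $\phi(J')$ nondegenerate. Because $f^{n}(A)$ is Hausdorff close to $D$ along a subsequence and $D\supset J$, there exist $n_0$ and an arc $L\subset f^{n_0}(A)\cap K_1$ with $\phi(L)$ nondegenerate. Indeed, the continuum $f^{n_0}(A)$ comes close to both endpoints of $J$ and essentially stays inside a neighborhood of $D$; I would use the graph structure near $J$, as in Lemma~\ref{lem:alpha_solenoid}, to get that it contains most of $J$. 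Mixing of $g$ gives $N$ with $g^{n}(\phi(L))=Y_1$ for all $n\geq N$. Pulling back through the almost conjugacy, $f^{n}(L)$ contains $\phi^{-1}(y)\cap\omega$ for all but finitely many fibres, since fibres are connected and their boundaries lie in $\omega$. Consequently, for $n\ge N$ the set $f^{n_0+n}(A)$ is $\epsilon$-dense in $K_1\cap\omega$, because nondegenerate fibres are countably many and only finitely many exceed any fixed size.

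Finally, $w$ is approached by the backward branch within $K_1$, whose points are close to $\omega$ (their $\alpha$-limit lies in $\omega$). Combining this with the density just obtained and with continuity yields $\lim_n\dist(w,f^n(A))=0$.

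The main obstacle is the step that extracts an arc $L\subset f^{n_0}(A)$ with nondegenerate $\phi$-image from mere Hausdorff closeness to $D$. The danger is that $f^n(A)$ approaches $D$ while missing exactly the nondegenerate fibre intervals, or while oscillating at a branching point. I would first try a pigeonhole/self-covering argument in the spirit of Lemma~\ref{lem:growing_arc}, possibly using Lemma~\ref{lem:self-covering}, to obtain a self-covering arc meeting $\Int J$. If that fails, I would fall back on Corollary~\ref{cor:basicset-sub}, applied to a recurrent continuum built as the closure of iterates of such an arc.
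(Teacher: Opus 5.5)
\paragraph{The gap: density in $\omega$ does not reach $w$.} The problem is in your final step. At best your mixing argument shows that $f^{n}(A)$ eventually becomes $\epsilon$-dense in $K_1\cap\omega$. But $w$ need not lie in $\omega$: it can sit in the interior of a nondegenerate fibre of $\phi$.

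Take the truncated tent map of Section~\ref{sec:preliminaries}, with $D=[0,1]$ and $\omega=M$. The point $w=1/2\in\Int J$ has backward branches running through the interiors of the collapsed intervals of $f^{-k}(J)$, and these can be chosen with $\alpha$-limit set an infinite subset of $M$. Then no $w_j$ lies in $\omega$, and ``density plus continuity'' cannot conclude.

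To get a point of $f^{n+|j|}(A)$ within $\epsilon'$ of $w=f^{|j|}(w_j)$, you need a point of $f^n(A)$ within the modulus of continuity of $f^{|j|}$ of $w_j$. Density in $\omega$ only gives distance of order $\dist(w_j,\omega)$. That quantity is fixed once $j$ is fixed, and it is exactly the scale that $f^{|j|}$ stretches over the whole fibre of $w$ (by the factor $s^{|j|}$ in the example). Meeting a fibre, or even containing its boundary points, is not the same as containing it: in a graph a continuum can reach both ends of a fibre by going around a loop. What you actually need is that \emph{every} point of $K_i$, fibre interiors included, is approached.

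There are two smaller problems.
\begin{itemize}
\item Mixing of $g$ does not give $g^n(\phi(L))=Y_1$. Mixing graph maps need not be exact: a fixed endpoint with no other preimage is never covered by images of arcs avoiding it.
\item Controlling one $K_i$ along one residue class does not give $\lim_n\dist(w,f^n(A))=0$ along all $n$. You need every $K_i$ along all large multiples of your $r$.
\end{itemize}

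\paragraph{The repair is your fallback, which is the paper's proof.} It sidesteps the fibre problem and needs no factor map. Write $m$ for the number of graphs $K_i$ (your $r$).
\begin{itemize}
\item Inside some $f^{km}(A)$ the paper takes two disjoint arcs $L_0=\overline{x_1x_2}$ and $L_1=\overline{y_1y_2}$ in one edge. Their endpoints are periodic of a common period $p$, and each arc meets $\omega\cap K_i$ in infinitely many points.
\item Since $f^p$ fixes all four endpoints, either some $L_a\subset f^p(L_a)$, or $f^p(L_0)\supset L_1$ and $f^p(L_1)\supset L_0$. Hence $L_a\subset f^{pt}(L_a)$ for some $a\in\{0,1\}$ and $t\in\{1,2\}$.
\item Then $L=\overline{\bigcup_j f^{jpt}(L_a)}$ is a periodic continuum with $\Int L\cap\omega\cap K_i\neq\emptyset$, so $K_i\subset L$ by Lemma~\ref{lem:basicset-sub}.
\item The sets $f^{jpt}(L_a)$ increase with $j$, and $f^{jpt}(L_a)\subset f^{km+npt}(A)$ for $n\geq j$. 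So \emph{every} $y\in K_i$ satisfies $\lim_n\dist(y,f^{km+npt}(A))=0$.
\item Since $f^m(K_i)=K_i$ and $m\mid pt$, the same limit holds along all multiples of $m$. Doing this for each $i$ covers all residues.
\item The transfer to $w$ needs no approximation. $\alpha(\{w_j\})$ is infinite while $\bigcup_i K_i$ has finitely many boundary points, so some $w_j$ lies in $\bigcup_i K_i$. Forward invariance then gives $w\in\bigcup_i K_i$.
\end{itemize}

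Your ``main obstacle'' is handled in the same spirit. A continuum Hausdorff-close to $D$ meets an edge in at most two arcs, so it cannot miss both of two well-separated subarcs of $J$.
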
		

\begin{proof}
{By assumptions $\omega$ is a basic set and $D$ is an invariant (hence recurrent) continuum, so we can apply Lemma~\ref{lem:basicset-sub}. Denote by}
$K_1,K_2,...,K_m$ minimal (in the sense of inclusion) graphs such that $\omega\subset \bigcup_i K_i{ =K}$, $f$ permutes graphs $K_i$
	and $m$ is such that $f^m|_{K_i}$ is almost conjugated to a mixing map.
	Take any $0\leq i\leq m$.
{Note that $K\subset D$ since $f(D)=D$ and $\omega\subset D$.} 

Since $D\in \omega_{\tilde{f^m}}(A)$ and $\omega$ is perfect{ as a basic set (e.g. see ~\cite[Theorem 2]{B1})}, there is some $k\geq 0$ such that $\omega\cap f^{km}(A)\cap K_i$ is infinite.
	Since $\omega$ is perfect and contained in $\overline{\Per(f)}$, there are 
	periodic points 
	\[x_1,x_2,y_1,y_2\in f^{km}(A)\] of common (not necessarily minimal) period $p$
	contained in the interior of the same edge $E$ of $D$ such that \[\overline{x_1 x_2}\subset f^{km}(A) \cap K_i,\] $\overline{x_1 x_2}\cap\omega\cap K_i$ is infinite,
	{\[\overline{y_1 y_2}\subset f^{km}(A) \cap K_i,\] $\overline{y_1 y_2}\cap\omega\cap K_i$ is infinite, \[\overline{x_1x_2}\cap \overline{y_1y_2}=\emptyset\] and each connected component of  $E\setminus(\overline{x_1x_2}\cup\overline{y_1y_2})$ has infinite intersection with $\omega\cap K_i$.} 
Denote $L_0=\overline{x_1 x_2}$ and $L_1=\overline{y_1 y_2}$. Then either $f^p(L_0)\supset L_0$ or $f^p(L_1)\supset L_1$ or
$f^p(L_0)\supset L_1$ and $f^p(L_1)\supset L_0$.
Therefore we can find some $r\in\{0,1\}$, $t\in\{1,2\}$ such that 
\[L_r\subset f^{pt}(L_r),\quad \Int L_r\cap\omega\cap K_i\neq\emptyset\quad \text{and}\quad 
L_r\subset  f^{km}(A).\] 
 Note that $K_i\cap f^{pt}(K_i)$ is infinite and therefore $m$ is a divisor of $pt$.
	
	Denote 
 $L=\overline{\bigcup_{j=0}^\infty f^{jpt}(L_r)}$	 and observe that
$L$ is a periodic continuum intersecting $\Int K_i$, so $K_i\subset L$.
This means that for each $y\in K_i$,
$\lim_n\dist(y,f^{km+npt}(A))=0$.
Since $f^m(K_i)=K_i$, we easily see that for each $y\in K_i$, 
$\lim_n\dist(y,f^{km+
mn}(A))=0$ and hence $\lim_n\dist(y,f^{mn}(A))=0$.	
Since $0\leq i\leq m$ was arbitrary, we proved that for each $j$, $0\leq j\leq m$, and all $y\in K_j$, $\lim_n\dist(y,f^{mn}(A))=0$.	
Since $f(\cup_i K_i)=\cup_i K_i$, it follows that for all $y\in \cup_i K_i$, $\lim_n\dist(y,f^{n}(A))=0$.	 	
		
	But $(w_j)_{j\leq 0}$ is a sequence such that $\alpha((w_j)_{j\leq 0})$ is infinite and is contained in $\cup_i K_i$. Using the fact that $\cup_i K_i$ is invariant, for each $j\leq 0$ we have $w_j\in  \cup_i K_i$.
	Therefore, $\lim_n\dist(w,f^n(A))=0$ which proves the lemma. 
\end{proof}


\begin{lemma}\label{lem:subset_of_D}
	{Let $A\in\C(G)$ and
let $D$ be a nondegenerate fixed point of $(\C(G),\tilde{f})$ and a unique periodic element of $\omega_{\tilde{f^n}}(A)$ for every $n\geq 1$.
	Then there are some $m\geq 0$ and $t\geq 1$ such that
 $\limsup_n f^{m+tn}(A)\subset D$. Furthermore, if $f^{m+tn}(A)\setminus D\neq \emptyset$ for all $n\geq 0$ then $\left(\bigcup_{n\geq 0} f^{m+tn}(A)\right)\cap \Per(f)\neq\emptyset$.}
\end{lemma}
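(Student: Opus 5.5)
We first show $\limsup_n f^{m+tn}(A)\subset D$ for suitable $m\geq 0$, $t\geq 1$, then derive the periodic-point conclusion from the way the excess $f^{m+tn}(A)\setminus D$ behaves.

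\emph{Step 1: reduce to self-covering pieces.} Since $D\in\omega_{\tilde f}(A)$ is nondegenerate and fixed, some iterate $f^{k}(A)$ is Hausdorff close to $D$. In particular $f^k(A)$ intersects the interior of an edge $E$ of $D$ in a nondegenerate arc lying well inside $D$. Because $D$ is the unique periodic element of every $\omega_{\tilde f^n}(A)$, Lemma~\ref{lem:self-covering} applies to every power $f^t$: any continuum $B\subset f^{m}(A)$ with $B\subset f^{tj}(B)$ satisfies $B\subset D$.

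Now suppose $y\in\limsup_n f^{m+tn}(A)\setminus D$, i.e. $y\in f^{m+tn}(A)$ for infinitely many $n$. Write $\epsilon=\rho(y,D)$. For those $n$ the continuum $f^{m+tn}(A)$ contains an arc from $y$ to $N(D,\epsilon/2)$, because $f^{m+tn}(A)$ is near $D$ along a subsequence and is connected.

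\emph{Step 2: find a self-covering arc outside $D$.} This uses the pigeonhole argument of Lemma~\ref{lem:growing_arc}. There are finitely many combinatorial types of arcs, determined by the edges traversed and the last branching point. So we can find $j<j'$ among these times and arcs $L\subset f^{m+tj}(A)$ reaching $y$ with $L\subset f^{t(j'-j)}(L)$. Then $L\not\subset D$, contradicting Lemma~\ref{lem:self-covering}.

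The delicate point is that the arc at time $j'$ must be obtained as an image of the arc at time $j$. To arrange this we choose $m,t$ first so that $f^m(A)$ already contains a point $u$ whose orbit under $f^t$ stays near a fixed reference. For instance, using Corollary~\ref{cor:proximal} and replacing $f$ by $f^t$, we take $u$ in an edge of $D$ that is eventually mapped into $D$, as in the proofs above. We then work with arcs joining $f^{tn}(u)$ to $y$ and follow them forward exactly as in Lemma~\ref{lem:growing_arc}. I expect this synchronization step, choosing $m$ and $t$ so that the arcs can be compared via forward images, to be the main obstacle.

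\emph{Step 3: the periodic-point conclusion.} Assume $f^{m+tn}(A)\setminus D\neq\emptyset$ for all $n$. By Step 1 these excess parts converge into $D$. Each component of $f^{m+tn}(A)\setminus D$ is attached to $D$ at a boundary point of $D$ in $G$, and $D$ has only finitely many such boundary points. By pigeonhole, a single boundary point $b$ of $D$, together with a single side $T$ of $b$ pointing out of $D$, carries excess arcs $J_n\subset f^{m+tn}(A)$ for infinitely many $n$, with lengths tending to $0$.

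Since $f(D)=D$, the points $b$ and their sides are permuted under iteration, so some power $f^{s}$ maps $b$ back to itself. If $b$ is not periodic, the finiteness of the boundary forces an eventually periodic pattern, and we may pass to a periodic boundary point in the same orbit. In either case the side $T$ is mapped into itself by $f^{s}$ near $b$. Then $b\in\Per(f)$, and $b\in J_n\subset f^{m+tn}(A)$ for such $n$, which gives $\left(\bigcup_{n\geq0}f^{m+tn}(A)\right)\cap\Per(f)\neq\emptyset$.

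If instead the excess arcs do not contain $b$ but only accumulate at it, we apply Lemma~\ref{lem:growing_arc} and Lemma~\ref{lem:self-covering} once more. The excess arcs would be nested under $f^s$, so some $J_n$ maps over an earlier one, producing a self-covering continuum outside $D$ and hence a contradiction. So the only possibility left is that $b$ itself lies in the excess arcs.
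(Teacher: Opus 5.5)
There is a genuine gap, in fact two. The first is in Step~2, which is where $\limsup\subset D$ should be proved. Pigeonholing over combinatorial types of arcs from $y$ to $D$ at two times $j<j'$ does not give $L\subset f^{t(j'-j)}(L)$. The point $y\in f^{m+tj'}(A)$ equals $f^{t(j'-j)}(w)$ for some $w\in f^{m+tj}(A)$, and in general $w\neq y$. So the arc at time $j'$ lies in the image of the arc through $w$, not of the arc through $y$. You flag this yourself, but the proposed remedy is not a construction: an anchor $u$ ``in an edge of $D$ eventually mapped into $D$'', with arcs from $f^{tn}(u)$ to $y$. In Lemma~\ref{lem:growing_arc} the anchor is a fixed point of $f$ lying in $A$, and growth of the arcs is forced by a first-hitting-time choice; neither is available here. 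Your justification that $f^{m+tn}(A)$ contains an arc from $y$ toward $D$ is also off. The times with $y\in f^{m+tn}(A)$ need not be times when $f^{m+tn}(A)$ is close to $D$. What is true is that $f^n(A)\cap D\neq\emptyset$ for all large $n$, since $f(D)=D$.

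The idea you are missing is that $m,t$ can be chosen so that no self-covering argument is needed at all:
\begin{enumerate}
\item Fix $\epsilon>0$ with no branching points in $N(D,\epsilon)\setminus D$.
\item Take times $n_k$ with $f^{n_k}(A)\cap D\neq\emptyset$ and $f^{n_k}(A)\subset N(D,\epsilon/k)$.
\item Pick $k_1<k_2$ whose excess sets $f^{n_k}(A)\setminus D$ occupy the same short arcs of $N(D,\epsilon)\setminus D$, the later one thinner. Then $f^{n_{k_2}}(A)\setminus D\subset f^{n_{k_1}}(A)\setminus D$.
\item Set $m=n_{k_1}$ and $t=n_{k_2}-n_{k_1}$. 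Since $f^t(D)=D$, induction shows that $f^{m+tn}(A)\setminus D$ is decreasing in $n$.
\item The hypothesis on $\omega_{\tilde{f^n}}(A)$ gives $D\in\omega_{\tilde f^t}(f^m(A))$. Hence the nested excess is eventually inside every $N(D,\eta)$, which is $\limsup\subset D$.
\end{enumerate}

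The second gap is in Step~3. The equality $f(D)=D$ does not make $f$ permute the boundary points of $D$: a boundary point can be mapped into the interior of $D$. So neither $f^s(b)=b$ nor ``$f^s$ maps the side $T$ into itself'' follows. Likewise, passing ``to a periodic boundary point in the same orbit'' presupposes that the orbit of $b$ stays in $\partial D$. What must be shown is that attaching points are carried to attaching points.

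In the nested setting this works as follows. Write $\overline{f^{m+tn}(A)\setminus D}=\bigcup_{i=1}^r L_i^n$, with decreasing arcs $L_i^n$ shrinking to points $l_i\in D$. Each $l_i$ lies in every $f^{m+tn}(A)$, because these sets are closed. Since $f^t(D)\subset D$, the excess at time $n+1$ lies in $f^t$ of the excess at time $n$. A small-ball argument around the finite set $C=\{l_i\}$ then gives $C\subset f^t(C)$. Hence $f^t(C)=C$, and so $C\subset\Per(f)\cap f^m(A)$. Without nesting you would need a chain argument based on $C_{n+1}\subset f^t(C_n)$, which you would again have to prove.

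Your final paragraph is moot. Each excess component's closure contains its attaching point, which lies in $f^{m+tn}(A)$ by closedness. The ``self-covering continuum outside $D$'' there is asserted rather than constructed.
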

\begin{proof}

{To see this, first observe that if $f^n(A)\subset D$ for some $n\geq 0$ then we are done with the proof of both claims by putting $m=n$ and $t=1$.}
	Now suppose $f^n(A)\setminus D\neq\emptyset$ for all $n\geq 0$.
	Take $\epsilon>0$ such that $N(D,\epsilon)\setminus D$ does not contain any branching point.
	We can pick a sequence $(n_k)_{k\geq 1}$ of positive integers such that $f^{n_k}(A)\subset N(D,\epsilon/k)$ and $f^{n_k}(A)\cap D\neq\emptyset$  for all $k\geq 1$.
	Note that also $f^{n_k}(A)\cap D$ is connected for all $k\geq 1$.
	{Since there are finitely many edges in $G$, we may assume that, for any two $k_1,k_2>0$, $f^{n_{k_1}}(A)\setminus D$ and $f^{n_{k_2}}(A)\setminus D$ intersect the same edges of $G$.
    Now we easily see that some $k_1<k_2$ satisfy $f^{n_{k_2}}(A)\setminus D\subset f^{n_{k_1}}(A)\setminus D$.}
	$D$ is invariant so, for $t=n_{k_2}-n_{k_1}$ and $m=n_{k_1}$, we have 
	\begin{equation}\label{eq:nested}
	f^{m+(n+1)t}(A)\setminus D\subset f^{m+nt}(A)\setminus D \subset f^{m}(A)\setminus D\text{ for all } n\geq 0.
	\end{equation}
{But $D\in\omega_{\tilde{f}^{t}}(f^{m}(A))$} and, therefore, $\limsup_n f^{m+nt}(A) \subset D$. 

We have yet to prove that $\left(\bigcup_{n\geq 0} f^{m+tn}(A)\right)\cap \Per(f)\neq\emptyset$.
Recall that there is some $r\geq 1$ such that, for all $n\geq 0$, $\overline{f^{m+nt}(A)\setminus D}=\cup_{i=1}^r L^n_i$, where $\{L^n_i\colon 1\leq i\leq r\}$ are pairwise disjoint arcs such that, for each $1\leq i\leq r$, $\{L_i^n\colon n\geq 0\}$ is a decresing nested set of arcs with $\cap_{n\geq 0} L^n_i=\{l_i\}$ for some $l_i\in D$. Let $s>0$ be the cardinality of $\{l_i\colon 1\leq i\leq r\}$ (they may coincide for different indices) and denote $C=\{c_i\colon 1\leq i\leq s\}=\{l_i\colon 1\leq i\leq r\}$.
We claim that $f^t(C)=C$.
Take some $\delta>0$ such that, for all $i\leq s$, $f^t(B(c_i,\delta))\cap C$ is an empty set when $f^t(c_i)\notin C$ and a singleton when $f^t(c_i)\in C$.
Let $n_1\geq 0$ be such that $f^{m+n_1t}(A)\subset N(D,\delta)$. But $f^{m+n_1t}(A)\setminus D$ and $f^{m+{(n_1+1)t}}(A)\setminus D$ intersect the same edges of $G$.
Note that \[f^{m+{(n_1+1)t}}(A)\setminus D\subset f^{m+n_1t}(A)\setminus D\subset\bigcup_{i=1}^s B(c_i,\delta).\]
Also, $D$ is invariant, so the set $\overline{f^{m+{(n_1+1)t}}(A)\setminus D}$ is covered by the sets of type $f^t(B(c_i,\delta))$ and, by the choice of $\delta$, indeed $f^t(C)=C$. But $C$ is finite and, therefore, $f^t$ permutes its elements. 
Since $C\subset f^m(A)$, the lemma is proved.


\end{proof}




The following lemma encapsulates the series of auxiliary results presented above in the case when there is a periodic point $y\in f^m(A)$ for some $m\geq 0$.
 
	\begin{lemma}\label{lem:fixed_D_fixed_y}
Let $A\in\C(G)$ and
let $D$ be a nondegenerate fixed point of $(\C(G),\tilde{f})$. 
Moreover, suppose that there is a periodic point $y\in f^m(A)$ for some $m\geq 0$.
Then $\omega_{\tilde{f}}(A)=\{D\}$.
\end{lemma}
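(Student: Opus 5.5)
The plan is to prove that $f^n(A)\to D$ in the Hausdorff metric. This splits into an \emph{upper} half, $\limsup_n f^n(A)\subset D$, and a \emph{lower} half, $\lim_n\dist(x,f^n(A))=0$ for every $x\in D$. Throughout I use the standing assumption of this part of the section: $D\in\omega_{\tilde f}(A)$, and $D$ is the unique periodic element of $\omega_{\tilde{f}^n}(A)$ for every $n\geq 1$. The statement does not repeat this assumption, but without it the conclusion is not even meaningful.

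\emph{Reduction.} Let $p$ be the period of $y$. I replace $A$ by $f^{m+j}(A)$ for a suitable $j$, so that $y\in A$ (up to moving $y$ along its orbit, which keeps a periodic point in $A$), and I replace $f$ by $f^p$. Then $y$ becomes a fixed point in $A$, and $D$ is still a nondegenerate fixed point of $\tilde{f}^p$. Since $D$ is $\tilde f$-fixed, $D$ lies in $\omega_{\tilde{f}^p}(f^{j}(A))$ for some residue $j$. Lemma~\ref{lem:iterates} together with the invariance $f(D)=D$ shows that Hausdorff convergence of $f^{np+j}(A)$ to $D$ for one residue propagates to all residues: apply $f^i$ and use continuity of $\tilde f$ and $\tilde f(D)=D$. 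So I may assume $p=1$, $y\in A$ with $f(y)=y$, and $D\in\omega_{\tilde f}(A)$.

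\emph{Upper half.} Lemma~\ref{lem:subset_of_D} gives $m',t$ with $\limsup_n f^{m'+tn}(A)\subset D$. Applying $f^i$ for $0\leq i<t$, using uniform continuity and $f(D)=D$, yields $\limsup_n f^{n}(A)\subset D$.

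\emph{Lower half.} Since $f(D)=D$, every $x\in D$ has a backward branch $(x_j)_{j\leq 0}$ contained in $D$. Its $\alpha$-limit set is closed and contained in $D$, and by the classification of \cite{Forys} it is one of the following:
\begin{enumerate}[(a)]
\item a periodic orbit;
\item a circumferential set (or a subset of one);
\item a minimal solenoid $Q_{min}$;
\item an infinite subset of a basic set.
\end{enumerate}
Each case is handled by a lemma already proved:
\begin{enumerate}[(a)]
\item If $x$ is periodic, Lemma~\ref{lem:periodic} applies. Otherwise Lemma~\ref{lem:alpha_periodic} applies, and both lemmas use the fixed point $y\in A$.
\item Corollary~\ref{cor:alpha_circumferential} applies.
\item Lemma~\ref{lem:alpha_solenoid} applies.
\item Lemma~\ref{lem:alpha_basic} applies.
\end{enumerate}
In every case $\lim_n\dist(x,f^n(A))=0$. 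To upgrade this pointwise statement to a uniform one, fix $\varepsilon>0$ and a finite $\varepsilon$-net of $D$. All net points are within $\varepsilon$ of $f^n(A)$ for $n$ large, so $D\subset N(f^n(A),2\varepsilon)$ eventually. Combined with the upper half, this gives $\dist_H(f^n(A),D)\to 0$, i.e. $\omega_{\tilde f}(A)=\{D\}$.

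\emph{Expected obstacle.} The delicate step is making sure the case list for $\alpha$-limit sets of branches inside $D$ is exhaustive and that each cited lemma's hypotheses hold after the reduction to $f^p$. In particular, in the circumferential case the $\alpha$-limit set may be only a subset of $S(K,f)$; I will argue that it then equals the unique minimal set of $f|_K$, which is $S(K,f)$ itself. The reduction bookkeeping, i.e. keeping a fixed point inside $A$ while keeping $D$ in the $\omega$-limit set of the right iterate, also needs care; I would handle it by choosing the residue $j$ first and then picking the corresponding point of the orbit of $y$ lying in $f^{m+j}(A)$.
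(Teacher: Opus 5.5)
Your proposal is correct and follows the paper's proof. The paper likewise reduces, via Lemma~\ref{lem:subset_of_D} and Lemma~\ref{lem:iterates}, to $y$ fixed and $\limsup_n f^n(A)\subset D$. It then goes through the possible $\alpha$-limit sets of backward branches inside $D$, invoking Lemmas~\ref{lem:periodic}, \ref{lem:alpha_periodic}, \ref{lem:alpha_solenoid}, \ref{lem:alpha_basic} and Corollary~\ref{cor:alpha_circumferential}. Your additions make explicit what the paper leaves implicit: the standing hypothesis ($D\in\omega_{\tilde f}(A)$, unique periodic element), the residue bookkeeping, and the $\varepsilon$-net upgrade to Hausdorff convergence.
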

\begin{proof}
 {By Lemma~\ref{lem:subset_of_D} and {Lemma~\ref{lem:iterates}}, we can assume that $\limsup_n f^n(A)=D$, that $m=0$ and that $y$ is a fixed point.}

Take any point $x\in D$ and some $(x_j)_{j\leq 0}$, a backward branch of $x$ contained in $D$.
Note that this is possible since $f\vert_D\colon D\to D$ is onto.
	A least one of the following holds:
\begin{enumerate}
\item\label{case:first} $x$ is a periodic point,
\item  $x$ is a non-periodic point and $\alpha((x_j)_{j\leq 0})$ is a periodic orbit,
\item $\alpha((x_j)_{j\leq 0})$ is a minimal solenoid,
\item $\alpha((x_j)_{j\leq 0})$ is an infinite subset of a basic set, or
\item\label{case:fifth} $\alpha((x_j)_{j\leq 0})$ is a circumferential set.	
\end{enumerate}		
Therefore, by Lemmas~\ref{lem:periodic},~\ref{lem:alpha_periodic},~\ref{lem:alpha_solenoid},~\ref{lem:alpha_basic} and Corollary~\ref{cor:alpha_circumferential}, respectively, we have $\lim_n\dist(f^n(A),x)=0$ in all cases.
	
Indeed, $\omega_{\tilde{f}}(A)=\{D\}$ and the proof is completed. 
\end{proof}

	The following theorem is a result of~\cite{MaiRecurrent} which will be of use in the proof of Lemma~\ref{lem:no_periodic}.

{\begin{theorem}\label{thm:recurrent_pts}
Let $f\colon G\to G$ be a graph map and $L=\Int \overline{xy}$ be an open arc contained in an edge of $G$.
If there exist $m,k\in\mathbb{N}$ such that $\{f^m(x),f^k(y)\}\subset L$, then 
$\Rec(f)\cap L\neq\emptyset$.
\end{theorem}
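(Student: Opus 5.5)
The plan is to argue by contradiction via an orientation (displacement) argument on the edge containing $L$. I identify the closed arc $J=\overline{xy}$ with $[0,1]$ inside its edge $E$, with $x=0$ and $y=1$. Since a periodic point is recurrent, it suffices to assume $\Per(f)\cap L=\emptyset$ and still produce a recurrent point in $L$.

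First I use continuity. Since $f^m(x)\in L$, there is $a\in L$ with $f^m([x,a])\subset L$. Since $f^k(y)\in L$, there is $b\in L$ with $f^k([b,y])\subset L$. For any $n\geq1$ and any point $t\in L$ with $f^n(t)\in L$, I define its displacement sign by comparing $f^n(t)$ and $t$ in the order of $E$. Because $f^n$ has no fixed point in $L$, the sign is constant on each connected component of $\{t\in L\colon f^n(t)\in L\}$. In particular, $f^m$ moves every point of $(x,a]$ to the right, and $f^k$ moves every point of $[b,y)$ to the left. This is the only place the hypothesis on both endpoints enters: it gives opposite drifts at the two ends of $L$.

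Next I build a monotone return sequence. I start at $t_0\in(x,a]$ and follow its orbit, recording its successive returns to $L$. The aim is to extract returns $t_0<t_1<t_2<\cdots$ in $L$, where each $t_{i+1}$ is an iterate of $t_i$. A return cannot move left while staying near the left end, since the drift there is to the right. So the right-moving returns can be chained. The chain cannot go past the region $[b,y)$ without a left move, and a left move followed by the right drift would force a sign change of $f^{n}(t)-t$ on a connected set, giving a fixed point of some iterate in $L$. Hence $(t_i)$ is bounded away from $y$ and converges to some $c\in L$; the same argument on the left shows $c\neq x$.

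It remains to show that $c\in\omega_f(c)$, or else that some iterate of $f$ has a fixed point in $[t_0,c]$, which I have excluded. The arc $[t_i,c]$ contains $t_i$, and an iterate $f^{n_i}$ sends $t_i$ to $t_{i+1}\in(t_i,c)$. If $f^{n_i}([t_i,c])$ did not contain points arbitrarily close to $c$, then $f^{n_i}(c)$ would lie on the left of $c$ inside $L$. Then $f^{n_i}$ would have opposite displacement signs at $t_i$ and $c$, producing a fixed point of $f^{n_i}$ in $[t_i,c]\subset L$, a contradiction. So $f^{n_i}(c)$ is either to the right of $c$ or leaves $L$; combining this with continuity along $t_i\to c$ should force $f^{n_i}(c)\to c$ along a subsequence, so $c$ is recurrent.

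I expect the main obstacle to be the unbounded and varying return times $n_i$ together with the possibility that $f^{n}$ maps part of an arc out of the edge $E$ and back. In that case the intermediate value argument has to be run only on the component of $\{t\in L\colon f^{n}(t)\in L\}$ containing the relevant points, and I must make sure $c$ stays in that component. If this direct approach fails, I will fall back on a limit-set argument. I take the point $t_0$, whose orbit returns to $L$ infinitely often and accumulates at $c\in L$, so that $\omega_f(t_0)\cap L\neq\emptyset$. I then use the drift structure to show that some minimal subset of $\omega_f(t_0)$ meets $L$, using Blokh's classification of $\omega$-limit sets (periodic orbit, solenoidal, basic, circumferential) and the fact that each type has its minimal part dense enough near points of $L$. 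Points of a minimal set are uniformly recurrent, which finishes the proof.
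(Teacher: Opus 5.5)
There is a genuine gap. Your Step 2 breaks exactly where graphs differ from intervals. The only sign information you have is for the two fixed iterates, $f^m$ on $[x,a]$ and $f^k$ on $[b,y]$. For any other $n$, the quantity $f^n(t)-t$ is only meaningful, and only sign-constant, on components of $\{t\in L\colon f^n(t)\in L\}$, and these need not contain the points you compare. A left return at time $n$ and a right return at time $n'\neq n$ are values of different functions, so no intermediate value argument links them.

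The claims themselves are false. Take an irrational rotation of the circle (a graph with one loop) and an open arc $L$ in the edge. Both hypotheses hold and there are no periodic points. Yet the returns of any $t_0\in L$ to $L$ are dense in $L$: they move left near $x$, move right near $y$, and come arbitrarily close to $y$. So ``a left move forces a fixed point of some iterate in $L$'' and ``$(t_i)$ is bounded away from $y$'' cannot be proved. Independently, nothing in your argument guarantees that the orbit of $t_0$ returns to $L$ more than once. The hypotheses give $f^m(t_0)\in L$ and say nothing about the later orbit.

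Step 3 has a second, independent gap. A limit $c$ of returns of a single orbit only gives $c\in\omega_f(t_0)$, not $c\in\omega_f(c)$. Continuity along $t_i\to c$ says nothing about $f^{n_i}(c)$, because $n_i\to\infty$. Your alternative, that $f^{n_i}(c)$ lies left of $c$ in $L$, ignores that $f^{n_i}([t_i,c])$ may leave the edge, where the intermediate value theorem is unavailable.

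The fallback restates the goal rather than proving it. It presupposes $\omega_f(t_0)\cap L\neq\emptyset$, which is the unproved infinite-return claim. Even granting that, a point of $\omega_f(t_0)\cap L$ need not be recurrent (e.g.\ a point of $Q_{\max}\setminus Q_{\min}$ for a solenoidal $\omega$-limit set). So you would still have to use the hypotheses on $x$ and $y$ to force a minimal set into $L$.

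What is missing is an argument about sets rather than one orbit. Here is one case that does work. Suppose every point of $\overline{L}$ eventually enters $L$. Then compactness gives a uniform bound $N$ on entrance times. For $t\in L$, every point of $\omega_f(t)$ therefore enters $\overline{L}$ within $N$ steps, so a minimal subset of $\omega_f(t)$ meets $\overline{L}$. If it met only $\{x,y\}$, it would contain $f^m(x)$ or $f^k(y)$, which lie in $L$; so it meets $L$ in a recurrent point. The real work is the complementary case, where some points of $L$ never re-enter $L$. Note that the paper does not prove this statement at all; it quotes it from Mai's work. Your argument therefore has to stand on its own, and as written it does not.
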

}

\begin{lemma}\label{lem:no_periodic}
Let $A\in\C(G)$ and
let $D$ be a nondegenerate fixed point of $(\C(G),\tilde{f})$ and a unique periodic element of $\omega_{\tilde{f^n}}(A)$ for every $n\geq 1$.
Moreover, suppose that $\left(\bigcup_{n=0}^\infty f^n(A)\right)\cap\Per(f)=\emptyset$.
Then $\omega_{\tilde{f}}(A)=\{D\}$.
\end{lemma}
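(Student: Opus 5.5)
We will prove that $\lim_n \dist(x,f^n(A))=0$ for every $x\in D$, and that $\limsup_n f^n(A)\subset D$. Together these give $f^n(A)\to D$ in $\dist_H$, i.e. $\omega_{\tilde f}(A)=\{D\}$.

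\emph{The inclusion $\limsup_n f^n(A)\subset D$.} By Lemma~\ref{lem:subset_of_D} there are $m\geq 0$ and $t\geq 1$ with $\limsup_n f^{m+tn}(A)\subset D$. The second half of that lemma would force $\bigl(\bigcup_n f^{m+tn}(A)\bigr)\cap\Per(f)\neq\emptyset$ if $f^{m+tn}(A)\setminus D\neq\emptyset$ for all $n$. This is excluded by hypothesis, so $f^{m+tn}(A)\subset D$ for some $n$. Since $f(D)=D$, we get $f^k(A)\subset D$ for all large $k$. By Lemma~\ref{lem:iterates} we may replace $A$ by this iterate and assume $A\subset D$.

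\emph{The density part.} Fix $x\in D$ and a backward branch $(x_j)_{j\leq 0}$ inside $D$; this is possible because $f|_D$ is onto. We run through the same five cases as in Lemma~\ref{lem:fixed_D_fixed_y}.
\begin{enumerate}
\item Basic-set $\alpha$-limit: Lemma~\ref{lem:alpha_basic} applies directly, since it needs no periodic point in $A$.
\item Minimal-solenoid $\alpha$-limit: Lemma~\ref{lem:alpha_solenoid} applies directly.
\item Circumferential $\alpha$-limit: Corollary~\ref{cor:alpha_circumferential} applies directly.
\item $x$ periodic, or $\alpha((x_j))$ a periodic orbit: the earlier Lemmas~\ref{lem:periodic} and \ref{lem:alpha_periodic} used a fixed point $y\in A$. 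That fixed point served only as an anchor for the arcs $L_i$ in Lemma~\ref{lem:growing_arc}.
\end{enumerate}

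\emph{Replacing the anchor in the periodic cases.} First we show $A$ is non-wandering. Since $D$ is nondegenerate and $f^n(A)$ accumulates on $D$ along iterates of $A$, Theorem~\ref{thm:recurrent_pts} shows $\bigcup_n f^n(A)$ contains recurrent points. Pick $u\in A$ and apply Lemma~\ref{lem:per_pts}. Case \eqref{lem:per_pts:c1} is excluded by hypothesis. In case \eqref{lem:per_pts:c3}, $\omega_f(u)$ is a periodic orbit not meeting $\bigcup_n f^n(A)$. In case \eqref{lem:per_pts:c2}, $\omega_f(u)$ is circumferential and lies in $D$. Either way there is a point $w\in\omega_f(u)\subset D$, along with iterates $f^{n_k}(u)$ converging to $w$ monotonically inside one edge, as in the proof of Lemma~\ref{lem:circumferential}. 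We then use the arcs from $f^{n_k}(u)$ to a fixed reference point (a point of $\omega_f(u)$, after passing to a power of $f$) in place of the arcs from $y$ in Lemma~\ref{lem:growing_arc}. The pigeonhole argument only needs finitely many combinatorial types of arcs inside $f^i(A)$ with a common endpoint behaviour, so it should go through. It produces an $f^k$-self-covering arc inside some $f^j(A)$ reaching arbitrarily close to the relevant periodic point $z$. From there the proofs of Lemmas~\ref{lem:periodic} and \ref{lem:alpha_periodic} run unchanged.

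\emph{Main obstacle.} The delicate step is this replacement of the fixed point $y\in A$ by a moving anchor. In particular, the arcs $L_i\subset f^i(A)$ must satisfy $L_{i+1}\subset f(L_i)$ while their base endpoint drifts along the orbit of $u$. If this fails, a fallback is available whenever $\omega_f(u)$ is circumferential: Lemma~\ref{lem:circumferential} already gives a self-covering arc $J\supset K$, and that arc can serve as the anchor continuum.
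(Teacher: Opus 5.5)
Your reduction to $A\subset D$ matches the paper's. You are also right that Lemmas~\ref{lem:alpha_basic}, \ref{lem:alpha_solenoid} and Corollary~\ref{cor:alpha_circumferential} need no periodic point in $A$. (Non-wandering of $A$ is immediate from $D\in\omega_{\tilde f}(A)$ with $D$ nondegenerate; Theorem~\ref{thm:recurrent_pts} plays no role there. Also, case \eqref{lem:per_pts:c2} of Lemma~\ref{lem:per_pts} gives \emph{some} point of $A$ with circumferential limit set, not your chosen $u$.)

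\textbf{The gap.} The genuine gap is the step you flag and then assert: a version of Lemma~\ref{lem:growing_arc} without a fixed point in $A$.
\begin{enumerate}
\item[(1)] The pigeonhole argument there needs more than finitely many combinatorial types. It needs two arcs of the same type to be \emph{nested}, so that $L_j\subset f^k(L_j)$. That is exactly what the common endpoint $y$, lying in every $f^i(A)$, provides, together with $y_{j+k}\notin f^j(A)$.
\item[(2)] With a drifting base point, same-type arcs need not be nested. If the anchor moves away from $w$ or switches sides of $w$, the conclusion $L_j\subset f^k(L_j)$ fails.
\item[(3)] Your concrete suggestion is worse. A point of $\omega_f(u)$ is periodic, so by hypothesis it lies in no $f^i(A)$, and arcs ending there are not contained in $f^i(A)$ at all.
\item[(4)] The periodic point you must approximate need not lie in $\omega_f(u)$.
\item[(5)] Nothing in your sketch controls how the orbit of $u$ approaches its limit, or prevents other orbits from re-entering the region between the anchor and the target.
\end{enumerate}
The circumferential fallback is likewise unproved. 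Note that $J$ itself need not lie in any $f^n(A)$; only $L$ does.

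\textbf{What the paper supplies.} This is precisely the missing mechanism.
\begin{enumerate}
\item[(a)] It first observes that $D$ contains only finitely many periodic points: none lies in $f^n(A)$, while $f^{n_k}(A)\to D$. Hence $D$ contains no basic sets or solenoids.
\item[(b)] Cycles carrying circumferential sets, which $f^n(A)$ approaches, are collapsed, and then Lemma~\ref{lem:fixed_D_fixed_y} applies.
\item[(c)] Otherwise all periodic points of $D$ are collapsed to one fixed point $z$. This gives $\omega(f|_D)=\{z\}$ with $z\notin\bigcup_n f^n(A)$.
\end{enumerate}
In that situation the paper proves the following.
\begin{enumerate}
\item[(i)] Small one-sided neighbourhoods of $z$ map into one-sided neighbourhoods, using $z\notin f^s(A)$ and $D\in\omega_{\tilde f}(A)$.
\item[(ii)] Some orbit in $A$ converges to $z$ monotonically inside a single side $L$, by Theorem~\ref{thm:recurrent_pts} and $\omega(f)=\{z\}$.
\item[(iii)] $\diam f^n(A)$ is bounded below; otherwise Lemma~\ref{lem:d-not-contained} gives a contradiction.
\item[(iv)] This is the key point. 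For a preimage $y_0'$ of a point on another side, the iterates $f^i(y_0')$ never enter $\overline{zy}$, again by Theorem~\ref{thm:recurrent_pts}.
\end{enumerate}
Since also $z\notin f^i(A)$, every arc from $f^i(y)$ to $f^i(y_0')$ inside $f^i(A)$ must pass through $y$. So the moving anchor, drifting monotonically toward the target $z$, behaves like a fixed one, and nesting and the pigeonhole are restored. Your proposal contains none of these ingredients, so the periodic and periodic-$\alpha$-limit cases remain unproved.
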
	
\begin{proof}
 {Since $\left(\bigcup_{n=0}^\infty f^n(A)\right)\cap\Per(f)=\emptyset$, from Lemma~\ref{lem:subset_of_D} it directly follows that\linebreak $\limsup_n f^{m+nt}(A)\subset D$ for some $m\geq 0$ and $t\geq 1$ and that $f^{m+tn}(A)\subset D$ for some $n\geq 0$.
 Now by Lemma~\ref{lem:iterates}, we may assume that $m=n=0$ and $t=1$.
 In particular, $A\subset D$.}
	First note that $D\in\omega_{\tilde{f}}(A)$ implies that $A$ is non-wandering{, i.e. $f^t(A)\cap f^r(A)\neq \emptyset$ for some $t\neq r$}.
	If there are no circumferential $\omega$-limit sets in $D$ then, by Lemma~\ref{lem:per_pts}, there exists some finite set $B$, a union of periodic orbits, such that $\omega_f(x)\subset B$ for all $x\in A$. 
 Note that $B$ does not necessarily equal whole set $\Per(f)\cap D$.
However, since $D\in \omega_{\tilde{f}}(A)$, there can be at most finitely many periodic points in $D\setminus\left(\bigcup_{n=0}^\infty f^n(A)\right) $ so we can extend $B$ to whole $\Per(f)\cap D$	
	and then collapse it to a point. Now the map  $f/_B\colon D/_B\to D/_B$ has a unique fixed point. Condition $\left(\bigcup_{n=0}^\infty f^n(A)\right)\cap\Per(f)=\emptyset$ also implies that  $D$ and as a consequence also  $D/_B$ do not contain solenoids or basic sets. 
	Hence the unique fixed point $z$ is also a unique $\omega$-limit point of  restriction of $f/_B$ to $D/_B$. But then also for any backward branch  contained in $D$, $\{z\}$ is its $\alpha$-limit set. Then Lemma~\ref{lem:alpha_periodic} implies that 
		$\lim_n\dist(y,f^n(A))=0$ for any $y\in D$ provided that $A\cap B\neq \emptyset$, i.e. $A/_B$ contains the unique periodic point.
	
{We have to consider remaining two cases. Assume first }	
that $D$ contains some circumferential sets.
	There can be at most finitely many of them so denote them by $E(K_i,f)$, where $K_i,\ 1\leq i\leq v$, are $v$ pairwise disjoint minimal cycles of graphs containing them.
	By Lemma~\ref{lem:contains_circumferential}, for each $y\in\bigcup_{i=1}^v K_i$ we have  $\lim_n\dist(y,f^n(A))=0$.
	Since $L=\bigcup_{i=1}^v K_i$ is a closed set with finitely many components and is strongly invariant, we can collapse it to a point, i.e. we continue by considering the map $f/_L\colon D/_L\to D/_L$.
 Note that $D/_L$ is now the unique periodic element of $\omega_{\tilde{f}^n/_L}(A/_L)$ for every $n\geq 1$. 
	But also in that case, $\bigcup_{n=0}^\infty(f/_L)^n(A/_L)$ contains a periodic point and, by  Lemma~\ref{lem:fixed_D_fixed_y}, we have $\omega_{\tilde{f}/_L}(A/_L)=\{D/_L\}$.
	This, together with  the fact that for all $y\in L$, $\lim_n\dist(y,f^n(A))=0$, implies $\omega_{\tilde{f}}(A)=\{D\}$ and the lemma is proved in this specific case.

	
{Finally, as the last case assume that }there is a fixed point $z\in D$ such that $\omega_f(x)=\{z\}$ for all 
{$x\in D$}
and $z\notin\left(\bigcup_{n=0}^\infty f^n(A)\right)$.
	
	First, let us show that for each $m\geq 1$ there exists some $\delta_m>0$ such that for each one-sided neigborhood $L$ of $z$, contained in $B(z,\delta_m)$, its images $f^i(L),\ 1\leq i\leq m$ are again one-sided neighborhoods of $z$.
	To that end, take some $m\geq 1$. 
	Since $z$ is a fixed point, we can pick $\delta_m>0$, such that \[\bigcup_{i=0}^m f^i(B(z,\delta_m))\cap \Br(D)\setminus\{z\}=\emptyset.\]
Suppose that for some $i\leq m$ and some one-sided neighborhood $L\subset B(z,\delta_m)$ of $z$, $f^i(L)$ intersects more than one one-sided neighborhood of $z$.
Since $L\subset B(z,\delta_m)$, it follows that $z\in f^i(L)$.
	Take some $y\in L$ such that $f^i(y)=z$.
	Since $D\in\omega_{\tilde{f}}(A)$, there is some $s\geq 0$ such that \[f^s(A)\setminus L\neq\emptyset\quad\text{and}\quad\diam (f^s(A)\cap L)>\diam (L)-\dist(y,z).\]
	Since $z\notin f^s(A)$, necessarily $y\in f^s(A)$ but then $z\in f^{s+1}(A)$ which is a contradiction.
	
	Denote by $v$ the valence of $z$. 
	Take any $x\in A$ and $\delta_v>0$ provided by construction from the previous paragraph.
Since $\omega_f(x)=\{z\}$, there  is some $n_0$ such that $f^n(x)\in B(z,\delta_v)$ for $n\geq n_0$.
	But then the set \[\{f^{n_0}(x),f^{n_0+1}(x),...,f^{n_0+v}(x)\}\] contains at least two elements $f^{s_1}(x),f^{s_1+s_2}(x)$ from the same one-sided neighborhood $L$ of $z$ with $\diam L=\delta_v$.
	By the previous argument it follows that $f^{s_1+ns_2}(x)\in L$ for all $n\geq 0$.
By Lemma~\ref{lem:iterates}, we can suppose that that $f^n(x)\in L$ for all $n\geq 0$.
Let $\epsilon=\dist(x,z)$. There is some $n_1\geq 0$ such that $f^n(x)\in B(z,\epsilon)\cap L$ for all $n\geq n_1$.
 Note that from the assumption it follows that $\omega(f)=\{z\}$ and hence \[(\Rec(f)\cap L)\subset(\omega(f)\cap L)=\emptyset.\]	
 This, together with Theorem~\ref{thm:recurrent_pts}, implies that $(f^{n_1+n}(x))_{n\geq 0}$ is a strictly monotone sequence in the ordering of $L$. 
 	Similarly to the proof of Lemma~\ref{lem:periodic}, let us denote $m^\prime=2m2^m$, where $m$ is the number of edges in $D$.
 	Let $\delta,\ 0<\delta<\epsilon<\delta_v$ be such that $\bigcup_{i=0}^{m^\prime} f^i(B(z,\delta))\subset  B(z,\epsilon)$.
 	There is some $n_2\geq 0$ such that $f^n(x)\in L\cap B(z,\delta)$ for all $n\geq n_2$.    
 	Let us set $y=f^{n_2}(x)$.
The sequence $(f^n(y))_{n\geq 0}$ is a strictly monotone sequence in the ordering of $L\cap B(z,\delta)$ converging towards $z$.
	
	If $\lim_k\diam(f^{n_k}(A))=0$ for some sequence $(n_k)_{k\geq 0}$ then
	 $f^{n_k}(A)\to\{z\}$. {But then Lemma~\ref{lem:d-not-contained} implies that either $A$ is wandering (so $\lim_k\diam(f^{k}(A))=0$) and then 
	 $\omega_{\tilde{f}}(A)=\{\{z\}\}$
	  or
	 	directly 
$\omega_{\tilde{f}}(A)=\{\{z\}\}$
	 	 (the case of circumferential set in  Lemma~\ref{lem:d-not-contained} is excluded). 
	 	This is a contradiction, so let us assume on the contrary}
%
that there is some $\kappa$, $0<\kappa<\delta$ such that $\diam f^n(A)\geq\kappa$ for all $n\geq 0$.
	This implies that for all $u\in L\cap B(z,\kappa)$, $u\in f^n(A)$ for all but at most finitely many $n\geq 0$.
	
	Take any $w\in D \setminus\{z\}$ and any backward branch $(w_i)_{i\leq 0}$ of $w$  in $D$.
	Again, such backward branch exists since $f\vert_D\colon D\to D$ is onto.
	Note that $\alpha((w_i)_{i\leq 0})=\{z\}$.
	We will show that there is a point $w_q$ for some $q\leq 0$ such that $w_q\in f^n(A)$ for all but finitely many $n$ which will prove the lemma.
	If there is an element of $(w_i)_{i\leq 0}$ contained in $L\cap B(z,\kappa)$ then we are done so suppose that there are some other sides of $z$ whose elements contain infinitely many elements of $(w_i)_{i\leq 0}$.
	Take any such side $T$ and some $N\in T$.

{Replacing $A$ by its iterate, we may assume that $y\in A$.}
	Similarly to the proof of Lemma~\ref{lem:alpha_periodic}, take some $\delta^\prime,\ 0<\delta^\prime<\delta$ such that $B(z,\delta)\setminus B(z,\delta^\prime)$ contains a sequence of  $(m^\prime)^v$ consecutive elements of $(w_j)_{j\geq 0}$.
	There is a point $y^\prime$ in $B(z,\delta^\prime)\cap N$ such that $y^\prime\notin \bigcup_{n=0}^{m^\prime} f^n(A)$.
	Then there is some $r>m^\prime$ such that $y^\prime\in f^r(A)\setminus(\bigcup_{i=0}^{r-1}f^i(A))$ and $y^\prime_0\in A$ such that $f^r(y^\prime_0)=y^\prime$.

 Let us firstly prove that $f^i(y_0^\prime)\not\in \overline{zy}$ for $0\leq i\leq r$.
	 By the construction, if $f^j(y^\prime_0)\in \overline{zy}$ for some $j<r$ then $f^j(y^\prime_0)\in L\cap B(z,\delta)$. 
    Let $j^\prime>j$ be the minimal integer such that\linebreak $f^{j^\prime}(y_0^\prime)\notin \overline{zy}$.
  By the construction \[\overline{zy}\subset B(z,\delta),\quad \bigcup_{i=0}^{m^\prime} f^i(B(z,\delta))\subset  B(z,\epsilon)\quad\text{and} \quad\epsilon=\dist(z,x),\] therefore $f^{j^\prime}(y_0^\prime)\in \Int \overline{f^j(y^\prime_0)x}$.	 		But $y$ is an element in the orbit of $x$ so, by Theorem~\ref{thm:recurrent_pts}, $\omega(f)\cap \overline{f^j(y_0^\prime)x}\neq\emptyset$ {which cannot hold, since in the considered case $\omega(f)=\{z\}$.
  	This implies that 
  	{$f^j(y^\prime_0)\in \overline{zy}\subset L$. }
  	}
	 But we picked $y^\prime\in N$ and $N\cap L=\emptyset$, which is a contradiction.
	Indeed, \[f^i(y_0^\prime)\not\in \overline{zy}\quad\text{for}\quad 0\leq i\leq r.\]	
	
If we pick a family of arcs $L_i,\ 0\leq i\leq m^\prime$ such that each $L_i$ has $f^i(y)$ and $f^i(y_0)$ as endpoints, $L_0\subset A$ and such that $L_{i+1}\subset f(L_i)$ then, like in Lemma~\ref{lem:alpha_periodic}, we can find $t_1,t_2\leq m^\prime$ such that $L_{t_1}\subset f^{t_2}(L_{t_1})$. {Hence}
after taking few more iterates we find a point $y_N\in B(z,\delta^\prime)\cap N$ such that $y_N\in f^{j+nt_2}(A)$ for some $j\geq 0$ and all $n\geq 0$.
	We continue by considering sequence  $(f^{j+nt_2}(A))_{n\geq 0}$ and repeating the same procedure for another side of $z$  {(and the map $f^{t_2}$)} containing infinitely many elements of $(w_i)_{i\leq 0}$.
	After finitely many steps we find some $k\geq 0$ and some $t\leq (m^\prime)^v$ such that $B(z,\delta)\setminus B(z,\delta^\prime)\subset f^{k+nt}(A)$ for all $n\geq 0$.
	Since $B(z,\delta)\setminus B(z,\delta^\prime)$ contains $(m^\prime)^v$ consecutive elements of $(w_i)_{i\leq 0}$, we indeed found some $w_q$ contained in $f^n(A)$ for all but finitely many $n\geq 0$ and the lemma is proved.
\end{proof}

\begin{lemma}\label{lem:periodic_supset}
Let $A\in\C(G)$ and let $D\in\Rec(\tilde{f})$ be provided by Corollary~\ref{cor:proximal}.
	Suppose that $D$ is either nondegenerate or that it is a periodic singleton contained in $f^m(A)$ for some $m\geq 0$.
	Then $A$ either satisfies condition~\eqref{circ} or~\eqref{asy_per} of Theorem~\ref{thm:main}. 
\end{lemma}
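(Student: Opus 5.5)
By Theorem~\ref{thm:rec-cont}, the recurrent continuum $D$ supplied by Corollary~\ref{cor:proximal} is of one of three kinds: a periodic point of $\tilde f$, a recurrent singleton, or an element of $\mathcal{R}$. The plan is to reduce in each case to a fixed point $D$ of $\tilde f$ that is the unique periodic element of $\omega_{\tilde f^n}(A)$ for every $n$. Once there, Lemmas~\ref{lem:fixed_D_fixed_y} and~\ref{lem:no_periodic} give $\omega_{\tilde f}(A)=\{D\}$, and Lemma~\ref{lem:iterates} returns asymptotic periodicity for the original map.

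\textbf{Case $D\in\mathcal{R}$.} Let $D\in\mathcal{R}_C$ for a circumferential set $M=E(K,f)$, and suppose $D\neq C$. Then $D$ is an arc in $C$ with both endpoints in $M$, and $f^{n_k}(A)\to D$. I would argue that $\omega_f(x)=M$ for all $x\in A$. Since $\phi(D)$ is a nondegenerate arc of $\mathbb{S}^1$, the sets $f^{n_k}(A)$ eventually lie in a small neighbourhood of $D$ inside $K$. From this, every point of $A$ is eventually mapped into $K$, which forces its $\omega$-limit set to be $M$. Lemma~\ref{lem:circ} then gives $f^k(A)\subset K$, which is condition~\eqref{circ}. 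If instead $D=C$, then $D$ is periodic, and this falls under the next case.

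\textbf{Case $D$ periodic.} Let $p$ be the period of $D$. Because the proximality times $n_k$ satisfy $f^{n_k}(D)\to D$, all but finitely many $n_k$ are multiples of $p$. The same argument used for singletons in Lemma~\ref{lem:d-not-contained} then shows that $D$ is a fixed point of $\tilde f^p$ lying in $\omega_{\tilde f^p}(A)$, so we may replace $f$ by $f^p$.

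The main obstacle is the uniqueness hypothesis. Suppose $E\neq D$ is another periodic element of $\omega_{\tilde f^n}(A)$. I would apply Lemmas~\ref{lem:periodic}, \ref{lem:alpha_periodic}, \ref{lem:alpha_solenoid}, \ref{lem:alpha_basic} and Corollary~\ref{cor:alpha_circumferential}. In the proofs of these results, the only thing used about $D$ is that it is fixed and lies in $\omega_{\tilde f}(A)$; uniqueness is never invoked. Under the appropriate periodic-point hypothesis they give $\lim_n \dist(x,f^n(A))=0$ for every $x\in D$. Hence $\liminf_n f^n(A)\supseteq D$, so $E\supseteq D$, and symmetrically $D\supseteq E$, giving $D=E$.

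Where these lemmas need a periodic point inside $f^m(A)$, I would get it from Lemma~\ref{lem:subset_of_D}. The alternative is that $\bigcup_n f^n(A)$ contains no periodic point, and then the collapsing arguments of Lemma~\ref{lem:no_periodic} apply. The delicate part is exactly this bootstrapping: Lemma~\ref{lem:subset_of_D} itself assumes uniqueness. To break the circularity, I would first pass to a nondegenerate fixed $D$ that is minimal with respect to inclusion among the periodic elements of $\omega_{\tilde f}(A)$, and rerun the argument of Lemma~\ref{lem:subset_of_D} for it.

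\textbf{Case $D=\{d\}$.} Here $d$ is periodic and $d\in f^m(A)$. Replacing $f$ by a power and $A$ by an iterate, we may assume $d$ is fixed and $d\in A$. For every $n$, $f^n(A)$ is a continuum containing $d$ along a subsequence converging to $\{d\}$. If $\diam f^n(A)\to0$, we get $\omega_{\tilde f}(A)=\{\{d\}\}$ and are done. Otherwise some periodic limit is nondegenerate, and the previous case applies. To exclude the mixed situation, I would use Lemma~\ref{lem:periodic} at $d$ together with Lemma~\ref{lem:self-covering}.
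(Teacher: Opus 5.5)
\textbf{There is a genuine gap.} The step that carries this lemma is never established. You need to show that $f^n(A)$ eventually lies in every neighbourhood of $D$. Equivalently, every point lying in $f^n(A)$ for infinitely many $n$ must belong to $D$, so that every element of $\omega_{\tilde f}(A)$ is contained in $D$.

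Your $\liminf$ argument gives only the reverse inclusion $\liminf_n f^n(A)\supseteq D$, and only when $\bigcup_n f^n(A)$ contains a periodic point, because Lemmas~\ref{lem:periodic} and~\ref{lem:alpha_periodic} need a fixed point in $A$. In the complementary case you fall back on Lemma~\ref{lem:no_periodic}, whose uniqueness hypothesis you cannot verify. You yourself note that Lemma~\ref{lem:subset_of_D} is circular here.

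The proposed repair does not work as stated:
\begin{itemize}
\item A minimal periodic element of $\omega_{\tilde f}(A)$ need not exist. Periods may be unbounded, and a descending chain of periodic continua can converge to a non-periodic one; compare Theorem~\ref{thm:center}.
\item Even if such an element exists, minimality only excludes periodic $E\subsetneq D$. It does not exclude periodic $E\not\subseteq D$, and excluding those is exactly the missing containment.
\end{itemize}

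The paper proves the containment directly, with no uniqueness assumption. Suppose $x\notin D$ lies in infinitely many $f^n(A)$.
\begin{itemize}
\item Choose $\delta_0<\dots<\delta_{2m}<\dist(x,D)$, where $m$ is the number of edges, with $f(N(D,\delta_i))\subset N(D,\delta_{i+1})$. Using Lemma~\ref{lem:iterates}, assume $A$ is $\delta_0$-close to $D$.
\item Pick $x_0\in A$ with $f^r(x_0)=x$. Let $k_i$ be the first times its orbit reaches the successive layers $N(D,\delta_i)\setminus N(D,\delta_{i-1})$.
\item The unique arcs $L_{k_i}\subset f^{k_i}(A)$ joining $D$ to $f^{k_i}(x_0)$ satisfy $f^{k_{i+1}-k_i}(L_{k_i})\supseteq L_{k_{i+1}}$.
\item At most $2m$ edge-directions leave $D$, so two of these arcs lie in the same direction. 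This gives an arc $L\not\subset D$ with $L\subset f^k(L)$ and $L\subset f^j(A)$, which contradicts Lemma~\ref{lem:self-covering}.
\end{itemize}
Once this is in hand, Lemmas~\ref{lem:fixed_D_fixed_y} and~\ref{lem:no_periodic} only have to supply $\liminf_n f^n(A)\supseteq D$.

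\textbf{The singleton case needs the same argument, and your sketch does not provide it.}
\begin{itemize}
\item From $\diam f^n(A)\not\to 0$ you get a nondegenerate limit of $f^n(A)$, but not a periodic one, so the previous case does not apply.
\item Lemma~\ref{lem:periodic} at $d$ gives nothing, since $d\in f^n(A)$ for all $n$.
\item You do not say how Lemma~\ref{lem:self-covering} would be brought to bear.
\end{itemize}
In the paper, the self-covering construction shows that each $x\neq d$ lies in only finitely many $f^n(A)$. Since $d\in f^n(A)$ and $\partial B(d,\kappa)$ is finite, connectedness then forces $f^n(A)\subset B(d,\kappa)$ for all large $n$.

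\textbf{A smaller issue in the case $D\in\mathcal R$.} Neighbourhoods of $D$ need not lie in $K$, so "every point of $A$ is eventually mapped into $K$" is unjustified. Instead, $\dist(f^{n_k}(x),D)\to 0$ gives $\omega_f(x)\cap D\neq\emptyset$. Hence $\omega_f(x)$ contains the unique minimal set $M$ of $f|_K$, and $\omega_f(x)=M$ by maximality (Theorem~\ref{thm:blokh-almost}). Lemma~\ref{lem:circ} then applies.
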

\begin{proof}
	From Theorem~\ref{thm:rec-cont} it follows that either $D\in\Per(\tilde{f})$ or $D\in\mathcal{R}$.
	First suppose that $D\in\mathcal{R}$.	
	It is easily checked that, for each $x\in A$, $\omega_f(x)\cap D\neq\emptyset$.
	It follows that $A$ satisfies the condition of Lemma~\ref{lem:circ} and therefore $A$ satisfies the condition~\eqref{circ} of Theorem~\ref{thm:main} and we are done in this case.
	
	Next, let us consider the remaining case, i.e. $D\in\Per(\tilde{f})$.
	We will show that $A$ is asymptotically periodic. 

	Suppose that $p>0$ is the minimal period of $D$ under $\tilde{f}$.
	There is $\epsilon>0$ such that $\dist_H(D,f^n(D))<\epsilon$ implies $D=f^n(D)$ and hence $p\vert n$.
This implies that  $p$ is a divisor of all but at most finitely many elements of $(n_k)_{k\geq 0}$ provided for $D$ and $A$ by Corollary~\ref{cor:proximal}. 
	By this observation and by Lemma~\ref{lem:iterates} we can assume, by replacing $f$ with $f^p$, that $D$ is a fixed point of $\tilde{f}$.
	Furthermore, since $\lim_k\dist_H(f^{n_k}(A),D)=0$, when $D$ is nondegenerate we may also also assume that $f^{n_k}(A)\cap D\neq\emptyset$ for all $k\geq 1$.
On the other hand, when $D=\{d\}$ is a fixed singleton contained in $f^m(A)$ for some $m\geq 0$ then $d\in f^n(A)$ for all $n\geq m$ and again by Lemma~\ref{lem:iterates} we can suppose that $d\in f^n(A)$ for all $n\geq 0$.

	In what follows, we will prove that if, for some $x\in G$, $\{n\geq 0\colon x\in f^n(A)\}$ is infinite then $x\in D$.
	To do this, suppose there is some $x\in G\setminus D$ with this property.
 If $D$ is degenerate then let $\epsilon<1/2\dist_H(\Br(G)\setminus D,D)$, otherwise let
	$\epsilon>0$ be the minimum of the lengths of components of $\Int E\cap D$ and $E\setminus D$ over all edges $E$ of $G$, where we take into account only the positive values among them.
	Let $m$ be the number of edges in $G$ and denote $\delta_{2m}=1/2\min\{\dist(D,x),\epsilon\}$.
	Since $D$ is fixed and $\tilde{f}$ is continuous, we may find some $\delta_i>0$ for $i=0,1,...,2m-1$ such that \[\tilde{f}(B_{\dist_H}(D,\delta_{i}))\subset B_{\dist_H}(D,\delta_{i+1})\quad\text{for}\quad i=0,1,...,2m-1.\]
	Observe that this implies $f(N(D,\delta_{i}))\subset N(D,\delta_{i+1})$, for $i=0,1,...,2m-1$.
	Recall that, by Lemma~\ref{lem:iterates}, we may assume that $A\in B_{\dist_H}(D,\delta_{0})$.

\begin{figure}[H]
\includegraphics[scale=2]{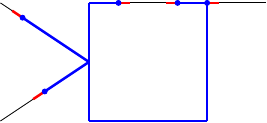}
{\caption{The sketch represents graph $D$ in blue with its boundary in $G$ indicated. Set $N(D,\delta_{2m})\setminus D$ is colored red and its closure contains each arc $L_i$, $i=1,2,\ldots,2m$.}\label{fig:N_D_delta}}
\end{figure}
    
	By assumption, there is some $x_0\in A$, eventually mapped to $x$, say $f^r(x_0)=x$.
	Note that $D$ is fixed by $\tilde f$, so $x_0\notin D$.	
By the definition of $\delta_{2m}$, there is unique arc $L_0$, such that $\Int L_0\subset N(D,\delta_0)\setminus D$ and whose one of the endpoints is $x_0$ and the other is in $D$ {(see Figure~\ref{fig:N_D_delta})}. Note that $L_0\subset A$, {because $x_0\in A$,\linebreak $A \subset N(D,\delta_0), A\cap D\neq\emptyset$ and $A$ is connected}.
Denote $x_j=f^j(x_0)$.
	By assumption there exist some $k_{1}<k_{2}<...<k_{2m}$ such that \[x_{k_i}\in N(D,\delta_{i})\setminus N(D,\delta_{i-1})\] {by the definition of each $\delta_i$, 
    since $x_0\in N(D,\delta_0)$, $f(N(D,\delta_{i}))\subset N(D,\delta_{i+1})$ and $x\notin N(D,\delta_{2m})$}.
    Furthermore, assume each $k_i$ is the minimal such index.
	Analogously to $L_0$ we define $L_{k_i}$ for\linebreak $i=1,2,...,2m$.
	It is easy to see that necessarily $f^{k_{i+1}-k_i}(L_{k_i})\supset L_{k_{i+1}}$.
	Indeed, if $j<r$, $x_j\in N(D,\delta_{i-1})\setminus D$ and $L$ is the unique arc contained in $N(D,\delta_{i})$ for some $i=0,1,2,...,2m-1$ and connecting $D$ with $x_j$ then $x_{j+1}\in N(D,\delta_{i+1})\setminus D$ and $f(L)$ contains the unique arc $J$ in $N(D,\delta_{i+1})$ connecting $D$ with $x_{j+1}$ (recall that $D$ is fixed, so $f(L)\cap D\neq\emptyset$).
	Therefore, there must be some $m\geq 0$ and some $k>0$ such that $f^k(L_{m})\supset L_m$.
	But on the other hand $L_{k_m}\subset f^{k_m}(L_0)\subset f^{k_m}(A)$.
	This contradicts Lemma~\ref{lem:self-covering}. We obtained that for every $\epsilon>0$ there is $n_0$ such that $f^n(A)\subset  N(D,\epsilon)$ for every $n>n_0$.
 { This shows that $A$ is asymptotically periodic in the case of $D$ being a singleton.
Now} either Lemma~\ref{lem:fixed_D_fixed_y} or~\ref{lem:no_periodic} finishes the proof.
\end{proof}	
	
\bigskip
	
\begin{proof}[Proof of Theorem~\ref{thm:main}]
	To prove the theorem, take some $A\in \C(G)$ and some $D\in \Rec(\tilde{f})$ provided by Corollary~\ref{cor:proximal}.
	Let us suppose that conditions~\eqref{wandering} and~\eqref{circ} do not hold.
	We will prove that $A$ is asymptotically periodic.

First suppose that $D=\{d\}$ is a singleton.	
Lemma~\ref{lem:degenerate_d} then implies $d\in\Per(f)$.
	If $d\notin\bigcup_{n=0}^\infty f^n(A)$ then we are done by Lemma~\ref{lem:d-not-contained}.

The two remaining cases are that $D$ is a periodic degenerate continuum contained in $\bigcup_{n=0}^\infty f^n(A)$ or that it is nondegenerate.
	But both those cases are covered by Lemma~\ref{lem:periodic_supset}, therefore the theorem is proved.	
\end{proof}	

\section{Bound on periods}\label{sec:periods}

\begin{lemma}\label{lem:omega_singleton}
	Let $T$ be a tree, $f$ continuous and surjective selfmap of $T$ and let there be a fixed point $z\in T$ such that $\omega_f(x)=\{z\}$ for all $x\in T$.
	Then $T=\{z\}$. 
\end{lemma}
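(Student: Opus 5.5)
We argue by contradiction: assume $T\neq\{z\}$ and produce a point of $T\setminus\{z\}$ whose $\omega$-limit set contains a point other than $z$. Since every $\omega$-limit set equals $\{z\}$, $\omega(f)=\{z\}$, and therefore $\Rec(f)=\Per(f)=\{z\}$. By Theorem~\ref{thm:recurrent_pts}, no open arc inside an edge that avoids $z$ can contain two points each of which is mapped back into it. The plan is to use surjectivity to build a backward branch that leaves every neighbourhood of $z$, and then derive a contradiction with the lack of recurrence away from $z$.

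\textbf{Step 1: the only point mapped to $z$ near $z$ is $z$.} Fix a component $U$ of $T\setminus\{z\}$ and write $\overline{U}=U\cup\{z\}$, which is a subtree. Suppose $f(y)=z$ for some $y\neq z$. Then $y$ is not periodic, and no backward branch of $y$ can accumulate only at $z$ without creating recurrence in an arc. Rather than pursue this directly, I would combine Lemma~\ref{lem:self-covering} with the argument already used in the last case of the proof of Lemma~\ref{lem:no_periodic}. There it was shown, using $\omega(f)=\{z\}$, that small one-sided neighbourhoods $L$ of $z$ have images $f^i(L)$ that are again one-sided neighbourhoods of $z$ for $i\le m$. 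Since $T$ is a fixed point of $\tilde f$ and $T\in\omega_{\tilde f}(T)$ trivially, the same reasoning applies with $A=T$ and $D=T$, except that $z\in A$ now. That part of the argument must therefore be replaced by a direct one.

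\textbf{Step 2: the main argument via maximal distance.} Let $R=\max_{x\in T}\dist(x,z)>0$, and choose $x^*$ with $\dist(x^*,z)=R$; this $x^*$ is an endpoint of $T$. By surjectivity, $x^*$ has a backward branch $(x_j)_{j\le 0}$. Its $\alpha$-limit set is a closed strongly invariant set. Every nonempty closed invariant set contains a minimal set, and the only minimal set is $\{z\}$. Using the classification of $\alpha$-limit sets of backward branches from \cite{Forys}, the set $\alpha((x_j))$ must be $\{z\}$, since it is either a minimal set or an $\omega$-limit set, and all of these equal $\{z\}$. So $x_j\to z$ as $j\to-\infty$.

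Now take a small $\delta>0$ such that $B(z,\delta)$ contains no branching point other than $z$. The points $x_j$ with $j$ very negative lie in one-sided neighbourhoods of $z$; by pigeonhole, infinitely many lie in the same side $S$. Consider the forward orbit from some $x_j$ in $S$ far out: it eventually reaches $x^*$, far from $z$. In the other direction, every point of $T$ is attracted to $z$. Choose $x_{j_1},x_{j_2}\in S$ with $j_1<j_2$ and $\dist(z,x_{j_1})<\dist(z,x_{j_2})$, arranged so that $f^{j_2-j_1}$ maps $x_{j_1}$ to $x_{j_2}$, which is farther from $z$ along the side. Since $f^n(x_{j_2})\to z$, some iterate $f^{k}(x_{j_2})$ enters the open arc $\Int\overline{x_{j_1}x_{j_2}}$, or crosses to the $z$-side of $x_{j_1}$. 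In the first case we can look for a second point: the arc $J=\overline{x_{j_1}x_{j_2}}$ satisfies $f^{j_2-j_1}(J)\ni x_{j_2}$, and later iterates return toward $z$. By the intermediate value property in a tree, some iterate of $J$ covers $J$, so $J\subset f^N(J)$ for some $N$. A standard argument then gives a fixed point of $f^N$ in $J$ (a self-covering arc in a tree contains a periodic point), which contradicts $\Per(f)=\{z\}$ because $z\notin J$. The alternative, in which the orbit jumps across $x_{j_1}$ directly toward $z$, is handled the same way with the arc $\overline{z\,x_{j_2}}$ shrunk appropriately.

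\textbf{Main obstacle.} The hardest step is producing a genuine self-covering arc $J\subset f^N(J)$ that avoids $z$. Orbits may pass through $z$ or switch sides, so one needs the valence bookkeeping that Lemma~\ref{lem:growing_arc} and Lemma~\ref{lem:alpha_periodic} provide. My first attempt would be a pigeonhole over sides of $z$, at most the valence of $z$ many, to force two consecutive backward-branch points into the same side in a monotone configuration. After that, the periodic point produced by self-covering gives the contradiction.
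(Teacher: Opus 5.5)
Your proposal does not reach a contradiction. Step~1 is abandoned without establishing anything. In Step~2, the step everything rests on is the existence of an arc $J$ avoiding $z$ with $J\subset f^N(J)$, and you only assert it (``by the intermediate value property in a tree, some iterate of $J$ covers $J$''). For $J=\overline{x_{j_1}x_{j_2}}$, the only points of $f^N(J)$ you control are $f^N(x_{j_1})$ and $f^N(x_{j_2})$. To get $J\subset f^N(J)$ you would need these two points on opposite sides of $J$ for one and the same $N$. Nothing in your setup synchronizes the position of the backward-branch point $x_{j_1+N}$ with the forward orbit of $x_{j_2}$.

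Your first case actually needs no self-covering. If $f^k(x_{j_2})\in\Int J$, then also $f^{j_2-j_1+k}(x_{j_1})=f^k(x_{j_2})\in\Int J$. Theorem~\ref{thm:recurrent_pts} then gives a recurrent point in $\Int J$, contradicting $\Rec(f)=\{z\}$. The real difficulty is the complementary case, where ``handled the same way with the arc $\overline{zx_{j_2}}$ shrunk appropriately'' is not an argument. An arc containing $z$ that covers itself yields nothing, since $z$ is fixed, and shrinking it away from $z$ returns you to the original problem. Letting $j_1\to-\infty$, this case just says that the orbit of $x_{j_2}$ avoids the whole arc $(z,x_{j_2})$ in $S$. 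That is perfectly consistent with the local picture at $z$: repelling along $S$, with the backward branch climbing out monotonically, and attracting along another side, or the orbit landing on $z$. So the contradiction has to come from a global covering argument.

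That global argument is what the paper supplies. Choose $\epsilon$ with $B(z,\epsilon)\cap\V(T)\subset\{z\}$, and $r$ with $f^n(\V(T))\subset B(z,\epsilon)$ for all $n\ge r$. Fix an endpoint $e_i\neq z$ and $n\ge r$, and let $L_i=\overline{ze_i}\setminus\{z,e_i\}$. Split $L_i$ into $M_i=\{x\in L_i\colon x\notin\overline{f^n(x)e_i}\}$ and $N_i=L_i\setminus M_i$. Both sets are open; this uses $f^n(x)\neq x$ for $x\neq z$ and the fact that vertices land in $B(z,\epsilon)$. Also $N_i\neq\emptyset$, because $f^n(e_i)$ lies near $z$. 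Connectedness of $L_i$ then gives $M_i=\emptyset$, so $e_i\notin f^n(\overline{ze_i})$ for all $n\ge r$. Surjectivity of $f^r$ now forces $\overline{ze_i}\subset f^r(\overline{ze_j})$ for some $j\neq i$. Following this relation around a cycle yields $\overline{ze_k}\subset f^{mr}(\overline{ze_k})$, hence $e_k\in f^{mr}(\overline{ze_k})$, a contradiction. This route needs neither the classification of $\alpha$-limit sets nor Theorem~\ref{thm:recurrent_pts}.
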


\begin{proof}
Suppose on the contrary, i.e. that $T$ is nondegenerate.	
	Take some $\epsilon>0$ such that $B(z,\epsilon)\cap \V(T)\setminus\{z\}=\emptyset$.
	Since for each vertex $v$ of $T$, $f^n(v)\in B(z,\epsilon)$ for all but at most finitely many $n\geq 0$, we can find some $r\geq 0$ such that $f^n(\V(T))\subset B(z,\epsilon)$ for all $n\geq r$.	
	Denote the points of $\End(T)\setminus\{z\}$ by $\{e_1,e_2,...,e_t\}$ and, for each $i,\ 1\leq i\leq t$, let $L_i=\overline{ze_i}\setminus\{z,e_i\}$.
	Fix any $n\geq r$.
	Similarly as Matviichuk did in~\cite{Matviichuk}, we partition each $L_i$ into two sets, $L_i=M_i{\cup} N_i$, where $M_i=\{x\in L_i\colon x\notin \overline{f^n(x)e_i}\}$	and $N_i=L_i\setminus M_i$. {Since $f^n(\V(T))\subset B(z,\epsilon)$, we see that $N_i\neq \emptyset$. Clearly $M_i$ is open. Now assume that $x\in \overline{f^n(x)e_i}$ and note that by assumptions $x\neq f^n(x)$. If $x\notin V(T)$ then there is $\delta>0$ such that $B(x,\delta)\subset \overline{f^n(x)e_i}$. But if $x\in V(T)$ then, since $f^n(\V(T))\subset B(z,\epsilon)$, $B(x,\delta)\cap L_i\subset \overline{f^n(x)e_i}$. This shows that $N_i$ is open.		
	But sets $N_i,M_i$ are disjoint and $L_i$ is connected, so $M_i=\emptyset$.}
	This implies that $e_i\notin f^n(\overline{ze_i})$ for every $n\geq r$.
	
	Since $f^r(T)=T$ and $f^r(V(T))\subset B(z,\epsilon)$, we conclude that for each $i$, $1\leq i\leq t$, there exists some $j\neq i$, such that 
	 $f^r(\overline{z{e_j}})\supset \overline{ze_i}$.
	It follows that for some $m,k\ 1\leq m,k\leq t$, $f^{mr}(\overline{ze_k})\supset \overline{ze_k}$.
{In particular $e_k\in f^{mr}(\overline{ze_k})$, which is a contradiction.}
	Indeed, $T$ is degenerate.

\end{proof}

\begin{theorem}\label{thm:periods}
	Let $T$ be a tree, $f\colon T\to T$ its continuous selfmap and $(C(T),\tilde{f})$ the induced system on the hyperspace of subcontinua of $T$.
	 Let $A\in C(T)$ be the periodic point of $\tilde{f}$ containing an element $z$ of $T$ such that $f(z)=z$. 
	Then the period of $A$ is a divisor of $\lcm\{1,2,...,\vert \End(T)\vert\}$.
\end{theorem}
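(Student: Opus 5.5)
We prove the theorem by analysing how the iterates $A_i=f^i(A)$, $0\le i<p$, sit around the common fixed point $z$, where $p$ is the minimal period of $A$ under $\tilde f$. Every $A_i$ contains $z$, so $B=\bigcup_{i<p}A_i$ is a subtree of $T$ with $f(B)=B$. The plan is to show that $p$ is the least common multiple of cycle lengths of a permutation induced by $f$ on a family of at most $|\End(T)|$ objects. Such a number divides $\lcm\{1,\dots,|\End(T)|\}$.

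First, each $A_i$ is a subtree through $z$. For nondegenerate $A_i$ we record the finite set $\End(A_i)$, which lies in $B$. We will use the endpoints of the tree $B$ that differ from $z$. These are at most $|\End(B)|\le|\End(T)|$ in number, since a subtree of a tree has no more endpoints than the tree. Each such endpoint $e$ is an endpoint of some $A_i$ and lies in a set $I(e)=\{i: e\in A_i\}$. Along the arc $\overline{ze}$ we set $\ell_i(e)$ to be the furthest point of $A_i\cap\overline{ze}$ from $z$. The arcs $\overline{z\,\ell_i(e)}$ are nested, because all $A_i$ contain $z$. The key object is the total preorder on indices $i$ given by the lengths $\ell_i(e)$ along each arc. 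For every $e\in\End(B)\setminus\{z\}$ we define $\mu(e)$ as the set of indices $i$ for which $A_i$ reaches $e$.

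Second, we show that the action $i\mapsto i+1 \pmod p$ is compatible with an action of $f$ on these endpoints. Since $f(B)=B$ and $f(z)=z$, every endpoint $e$ of $B$ is covered, so $e\in f(A_i)=A_{i+1}$ for every $i\in\mu(e)-1$. Here we want Lemma~\ref{lem:omega_singleton}-style reasoning to exclude degenerate behaviour. If $f|_B$ had all points attracted to $z$, then $B$ would be degenerate. So we may assume that $f$ sends tips to tips in the following sense: there is a map $\sigma$ on the set $\mathcal E$ of endpoints of $B$ other than $z$ such that $\mu(\sigma(e))\supseteq\mu(e)+1$. A counting argument will give equality, because the sets $\mu(e)$ together cover each index with constant multiplicity. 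Then $\sigma$ is a permutation of $\mathcal E$.

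Third, we conclude the argument. The family $(\mu(e))_{e\in\mathcal E}$ determines $A_i$ up to the question of whether $A_i$ equals the convex hull of $z$ and $\{e: i\in\mu(e)\}$. We handle this by replacing endpoints with the finitely many "extremal tips" $\ell_i(e)$. Equivalently, we reduce to the case where each $A_i$ is the convex hull of $z$ with some endpoints of $B$, using that $A_j=A_{j+p}$ pins the tips. Iterating $\sigma$ then gives $\mu(\sigma^k(e))=\mu(e)+k$. If $q$ is the order of $\sigma$, then $\mu(e)+q=\mu(e)$ for all $e$, so $A_{i+q}$ and $A_i$ reach the same endpoints of $B$. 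The hardest step is to show that this forces $A_{i+q}=A_i$, so that $p\mid q$. The order of a permutation of at most $|\End(T)|$ points divides $\lcm\{1,\dots,|\End(T)|\}$, which then gives the theorem.

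For the hardest step, we would first try to prove that two subtrees in the orbit which reach the same endpoints of $B$ must coincide. The approach is to apply Lemma~\ref{lem:self-covering}-type reasoning to the difference: if $A_i\subsetneq A_{i+q}$ strictly along some arc, then iterating $f^q$ produces a strictly nested periodic sequence, which contradicts periodicity. If this fails, we would instead refine the set $\mathcal E$ to the vertices where the $A_i$ terminate, and check that their number is still bounded by $|\End(T)|$.
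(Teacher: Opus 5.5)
Your proposal has a genuine gap: the permutation $\sigma$ in Step~2 does not exist in general, and the ``hardest step'' is left unproved. Your justification for $\sigma$ is a non sequitur. That not all points of $B$ are attracted to $z$ (what Lemma~\ref{lem:omega_singleton} gives for $f|_B$) says nothing about endpoints of $B$ being carried to endpoints of $B$. A tip of $A_{i+1}$ is the image of some point of $A_i$, and that point is typically interior.

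Here is a concrete example. Let $T$ be a star with centre $z$ and arms $I_1,\dots,I_4$, each parametrised by $[0,1]$ with $z$ at $0$ and endpoint $e_j$ at $1$. Write $I_j[0,c]$ for the subarc with parameters in $[0,c]$, and fix $a,b\in(0,1)$. Define $f$ as follows:
\begin{itemize}
\item $f$ is the identity on $I_2\cup I_1[0,b]$;
\item $f$ maps $I_1[b,1]$ onto a path from the parameter-$b$ point of $I_1$, through $z$, covering $I_3\cup I_4$;
\item $f$ collapses $I_3[0,a]\cup I_4$ to $z$;
\item $f$ maps $I_3[a,1]$ onto a path from $z$ covering $I_1\cup I_3[0,a]\cup I_4[0,a]$.
\end{itemize}
Then $A_0=I_1\cup I_2\cup I_3[0,a]\cup I_4[0,a]$ and $A_1=I_1[0,b]\cup I_2\cup I_3\cup I_4$ are swapped by $f$, and $B=T$. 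We get $\mu(e_1)=\{0\}$, $\mu(e_2)=\{0,1\}$ and $\mu(e_3)=\mu(e_4)=\{1\}$. Index $0$ is covered twice and index $1$ three times, so the ``constant multiplicity'' claim is false. No permutation with $\mu(\sigma(e))\supseteq\mu(e)+1$ exists: $\sigma(e_2)=e_2$ is forced, and then $\sigma(e_3)=\sigma(e_4)=e_1$. The period here is $2$, so the theorem is not contradicted, but your intermediate structure is. Without a permutation, the order $q$ and the relation $\mu(\sigma^k(e))=\mu(e)+k$ are unavailable.

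Even granting $\sigma$, the final step fails as proposed. Two subtrees through $z$ containing the same endpoints of $B$ need not be comparable; each may reach farther along a different arc toward endpoints lying in neither. Your nesting argument only works when $A_i\subseteq A_{i+q}$, since then $A_i\subseteq A_{i+q}\subseteq\cdots\subseteq A_{i+pq}=A_i$. The fallback of using all termination points of the $A_i$ loses the bound, because there can be up to $p\,|\End(T)|$ of them. This already happens for an isometric rotation of a $4$-arm star.

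What is missing is a mechanism controlling the part of $A$ that no finite endpoint bookkeeping captures. The paper supplies it dynamically:
\begin{enumerate}
\item For $x\in A$ it takes the cycle of graphs attached to $\omega_f(x)$: the orbit itself if periodic, Lemma~\ref{lem:solenoid-components} for solenoids, Lemma~\ref{lem:basicset-sub} for basic sets.
\item It considers the at most $|\End(T)|$ arcs $L_i$ from $z$ to the components closest to $z$. On these $f$ induces only a covering relation, $f(L_j)\supset L_m$, not a permutation.
\item It extracts a self-covering arc $L_j\subset f^k(L_j)$ with $k\le|\End(T)|$. With $s=\lcm\{1,\dots,|\End(T)\setminus\{z\}|\}$, this shows every relevant component is either contained in $f^{ns}(A)$ for all $n$ or misses the limit set there for all $n$.
\item This yields a subtree $N\subseteq A$, strongly invariant under $f^s$, that carries all $\omega_{f^s}$-limit points of $A$.
\item Collapsing $N$ and applying Lemma~\ref{lem:omega_singleton} gives $N=A$, hence $f^s(A)=A$.
\end{enumerate}
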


\begin{proof}
	Assume there is some $x\in A$ such that $\omega_f(x)$ is a periodic orbit.
	Surely, if $p$ is period of that periodic orbit, we can find some nonempty subset $B\subset \omega_f(x)$ such that $f^{np}(A)\cap\omega_f(x)=B$ for all $n\geq 0$.
	But we can say something similar in the case of the other types of $\omega$-limit sets as well, as shown below.
	
	Let us suppose there is $x\in A$ such that $\omega_f(x)$ is contained in a maximal solenoid $\omega$.
	Lemma~\ref{lem:solenoid-components} provides us with some cycle of trees $K$ for which we can say the following:
	if $p$ is a period of $K$, then for each component $C$ of $K$ either for all $n\geq 0$ it holds $C\subset f^{np}(A)$ or $C\cap f^{np} (A)\cap\omega=\emptyset$ for all $n\geq 0$.
	
	Similarly, by Lemma~\ref{lem:basicset-sub} and Corollary~\ref{cor:basicset-sub}, if $\omega_f(x)$ is an infinite subset of a basic set $\omega$ for some $x\in A$ and $K_1,K_2,...,K_r$ are the trees described in the lemma, we can say that for each $i$, 
	$1\leq i\leq r$, either $f^{nr}(A)\cap K_i\cap \omega=\emptyset$ for all $n\geq 0$, $K_i\subset f^{nr}(A)$ for all $n\geq 0$ or $f^{nr}(A)\cap K_i\cap \omega$ is a single periodic point for all $n\geq 0$.
	The {intersection being a singleton instead of just a finite set in the} latter of the three possibilities is a special consequence of the fact that the underlying space is a tree and not a graph, that $K_i$ is connected and of the fact that $A$ contains a fixed point.	
	
	Of course, if we went over all $\omega$-limit sets of all the points in $A$, then just the observation above generally would not provide us with a mutual iterate of $f$ under which $A$ would have the described properties for all the $\omega$-limit sets.
	Still, using the fact that $A$ contains a fixed point we will be able to find such mutual iterate and have it uniformly bounded for all subtrees of $T$ possessing a fixed point so let us show how we do it.
	
	Take any $x\in A$ and let $K$ be equal to $\omega_f(x)$ if it is a periodic orbit and otherwise let $K$ be the cycle of graphs described in the Lemma~\ref{lem:solenoid-components} or~\ref{lem:basicset-sub}. 	
	Moreover, if $\omega_f(x)$ is an infinite subset of a basic set $\omega$, suppose that for subtrees $K_i$ described in Lemma~\ref{lem:basicset-sub} it holds that $f^{nr}(A)\cap K_i\cap \omega=\emptyset$ for all $n\geq 0$ or $K_i\subset f^{nr}(A)$ for all $n\geq 0$ as we will be dealing with the other ones later. 	
	The component $C$ of $K$ is said to be \emph{closest to $z$} if there is an arc $L$ in $T$ having $z$ as one endpoint and the other endpoint in $C$ and not intersecting $K\setminus C$.
	The number of components of $K$ which are closest to $z$ is less than or equal to the number of endpoints in $T$.
	Let us numerate the components of $K$ as $C_1,C_2,...,C_p$ in a way that for some $1\leq t\leq p$, $C_1,C_2,...,C_t$ are the ones which are the closest to $z$.
	Let us, for $1\leq i\leq t$, denote by $L_i$ the unique arc in $T$  with one of its endpoints being $z$ and the other one being the element of $C_i\cap\omega_f(x)$ such that {$L_i\cap K\cap \omega_f(x)$} is a singleton.
	This is possible since $C_i$ is closest to $z$ and $C_i\cap\omega_f(x)$ is closed so it has the maximal and minimal element in the ordering of each edge it intersects.
	Note that in the special case of $z\in C_i$ and $i=t=1$, either $z\in\omega_f(x)$ so we allow that $L_1=\{z\}$ or in fact we have $t^\prime\geq 2$ such arcs
	 { on respective sides}
	of $z$, but $t^\prime$ is still not greater than $\vert\End(T)\vert$.
	In the latter case let us denote those arcs as $L_1,L_2,...,L_{t^\prime}$ and replace $t$ by $t^\prime$.
	
	Since $\omega_f(x)\cap A\neq\emptyset$, there is some $1\leq i\leq t$ such that $L_i\subset A$.
	Note that for each $1\leq j\leq t$ there is some $1\leq m\leq t$ such that $f(L_j)\supset L_m$.
	It is easy to see that after 
	{ considering an iterate of $A$, we may}
	 find some $1\leq j,k\leq t$, $0\leq r\leq t$ such that $L_j\subset f^r(A)$ and $f^k(L_j)\supset L_j$.	
	For the sake of convenience, let us, without loss of generality, assume that $j=1$.
	
	Observe that, while $\tilde{f}$ permutes the components of $K$ in a cyclic manner, under the {action} of $\tilde{f}^k$ the set of the components of $K$ breaks into $q$ disjoint cycles of length $p/q$ where $q$ is the greatest common divisor of $p$ and $k$.
	Note that, since $A$ is periodic,  $f^k(L_1)\supset L_1$ and $L_1\subset f^r(A)$ then $f^{nk}(L_1)\subset f^r(A)$ for every $n\geq 0$.
	Indeed, for every $n\geq 0$ and every $m\geq 0$, $f^{nk}(L_1)\subset f^{(n+m)k}(L_1) \subset f^{r+(n+m)k}(A)$.
	But $f^{r}(A)$ is a periodic point of $\tilde{f}^{k}$ so $f^{nk}(L_1)\subset f^r(A)$.
	
	This implies that $C_i\subset f^r(A)$ for all $1\leq i\leq p$ such that $i\equiv 1 \pmod{q}${, because $K$ is the smallest possible cycle of graphs containing $\omega_f(x)$}.
	Note that the fact that $A$ is periodic implies that $f^n(A)$ contains the same number of components of $K$ for all $n\geq 0$.
	Therefore, if $f^r(A)$ contains $p/q$ components of $K$ then we are done, i.e. we found $k\geq 1$ such that, for each $1\leq i\leq p$, $C_i\subset f^{nk}(A)$ for all $n\geq 0$ or $f^{nk}(A)\cap C_i\cap\omega_f(x)=\emptyset$ for all $n\geq 0$. 	
	Also, if for every $0\leq j\leq q-1$ we have that either $C_i\subset A$ for all $1\leq i\leq p$ such that $i\equiv j \pmod{q}$ or $C_i\cap A\cap \omega_f(x)=\emptyset$ for all $1\leq i\leq p$ such that $i\equiv j \pmod{q}$ then we are done.
	By what is observed in earlier this paragraph and for the sake of convenience, we can assume that $r=0$.
	
	Now let us suppose the otherwise, i.e. that there is some $0\leq j\leq q-1$ and $1\leq i_1<i_2\leq p$ such that $i_1\equiv i_2\equiv j \pmod{q}$ and $C_{i_1}\subset A$ while $C_{i_2}\cap A\cap \omega_f(x)=\emptyset$.
	Then there are some $1\leq j_1<j_2\leq p$ (namely $j_1=i_1+ q-j+1 \mod{p}$, $j_2=i_2+ q-j+1\mod{p}$) such that  $j_1\equiv j_2\equiv 1 \pmod{q}$, $C_{j_1}\subset f^{q-j+1}(A)$ and $C_{j_2}\cap f^{q-j+1}(A)\cap \omega_f(x)=\emptyset$.
	Then there is some $m\geq 0$ such that $f^{mk}(C_{j_1})=C_1\subset f^{mk+q-j+1}(A)$.
	Since $f^{mk+q-j+1}(A)$ contains $z$ and $C_{1}$, it necessarily also contains $L_1$ and again we get that $f^{nk}(C_1)\subset f^{mk+q-j+1}(A)$ for all $n\geq 0$ and hence $C_i\subset f^{mk+q-j+1}(A)$ for all $1\leq i\leq p$ such that $i\equiv 1 \pmod{q}$.
	 {Consequently,} $C_{i}\subset f^{mk+q}(A)$ for all $1\leq i\leq p$ such that $i\equiv j \pmod{q}$.
	But $q$ is the divisor of $k$ so if we act with $f^q$ {on these sets} $k/q-1$ times we get that $C_{i}\subset f^{(m+1)k}(A)$ for all $1\leq i\leq p$ such that $i\equiv j \pmod{q}$	. 
	 The set of all such components $C_i$ is fixed under $f^k$ so in particular $C_{i_2}\subset f^{nk}(A)$ for all $n\geq m+1$ and therefore $C_{i_2}\subset A$, a contradiction.	
	
	Now denote by $s$ the lowest common multiple of the elements of a set\linebreak $\{1,2,...,\vert \End(T)\setminus \{z\}\vert\}$.
	It is clear that for each $x\in A$, such that $\omega_f(x)$ is not a basic set, there is a cycle of (possibly degenerate) graphs $K$ containing $\omega_f(x)$ such that for each component $C$ of $K$, either $C\subset f^{ns}(A)$ for all $n\geq 0$ or $C\cap f^{ns}(A)\cap\omega_f(x)=\emptyset$.
	However, note that, since this property works for periodic points and each maximal basic set $\omega=E(K,f)$ such that, for each component $C$ of $K$ {we have } $C\subset A$ or $C\cap A\cap \omega=\emptyset$,
	{we can extend it onto other $\omega$-limit sets contained in $\omega$ as well. Hence, if}
	 $\omega=E(K,f)$ is maximal basic set containing $\omega_f(x)$ for some $x\in A$, then for each component $C$ of $K$ either $C\subset f^{ns}(A)$ for all $n\geq 0$ or $C\cap f^{ns}(A)\cap\omega=\emptyset$ for all $n\geq 0$ or there is a periodic point $y$ such that $C\cap f^{ns}(A)=\{y\}$ for all $n\geq 0$.
	Since $\omega_f(x)\cap A\neq\emptyset$ for all $x\in A$, we in fact proved the following:
	if we denote $\omega^\prime=\bigcup_{x\in A}\omega_{f^s}(x)$ then $\omega^\prime\subset A$ and for each  $x\in A$ there is some cycle of (possibly degenerate) graphs $K$ for $f^s$ with components possibly having one-point intersection such that $K\subset A$.
	
	Having that said, there is a finite set $Y=\{y_1,y_2,...,y_m\}$ of all the points of $\omega^\prime$ which are 
	{farthest} from $z$ in a sense that for any $y\in\omega^\prime\setminus Y$, $\overline{zy}\cap Y=\emptyset$.
	For each $1\leq i\leq m$ there is some cycle of graphs $K_{y_i}\subset A$ for $f^s$, described above, with component $C_{y_i}\ni y_i$.
	Let $M$ be the minimal subtree of $T$ containing $\bigcup_{i=1}^m C_{y_i}$.
	It follows that, for all $1\leq i\leq m$, $K_{y_i}\subset M$.
	Since $f^s(K_{y_i})=K_{y_i}$, $f^s(M)$ also contains $C_{y_i}$ for all $1\leq i\leq m$.
	By minimality, $M\subset f^s(M)$.
Hence, $N=\overline{\bigcup_{n\geq 0}f^{ns}(M)}$ is a subtree of $A$, strongly invariant under $f^s$.	
	
	Consider a factor system $(T/_{N},f^s/_{N}),$ where $T/_{N}$ is the factor space and\linebreak $f^s/_{N}\colon T/_{N}\to T/_{N} $ is given by $f^s/_{N}\circ\pi_{N}=\pi_{N}\circ f^s,$ where $\pi_{N}\colon T\to T/_{N}$ is the canonical projection.
	Note that $\pi_N(A)$ is periodic element of $\C(T/_{N})$ with the period under $f^s$ equal to the one of $A$ in $\C(T)$.
	Also, for every $x\in \pi_N(A)$, $\omega_{f^s/_{N}}(x)=\pi_N(N)$ and $\pi_N(N)$ is obviously a singleton.
	But now we are done, since by Lemma~\ref{lem:omega_singleton}, 
	$\pi_N(A)$ is singleton, i.e. $N=A$ and therefore $f^s$ fixes $A$.
\end{proof}

\begin{corollary}\label{cor:periods}
Denote $m=\lcm\{1,2,...,\vert \End(T)\vert\}$ and let $P_1$ and $P_2$ be two intersecting periodic points of $(C(T),\tilde{f})$ of periods $p_1$ and $p_2$, respectively.
If $p_1>mp_2$ then $P_1\subset P_2$.
\end{corollary}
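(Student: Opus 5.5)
The plan is to reduce to Theorem~\ref{thm:periods} by producing a fixed point of a suitable iterate of $f$ inside $P_1\cap P_2$. Set $Q=P_1\cap P_2$. Since $T$ is a tree, the intersection of two subcontinua is either empty or a subcontinuum, so $Q\in C(T)$ is nonempty by hypothesis. We have $f^{p_1p_2}(P_1)=P_1$ and $f^{p_1p_2}(P_2)=P_2$, which gives $f^{p_1p_2}(Q)\subset Q$. By the fixed point property of trees (every continuous selfmap of a subtree has a fixed point), $g=f^{p_1p_2}|_Q$ has a fixed point, so some $z\in Q$ satisfies $f^{p_1p_2}(z)=z$.

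That $z$ is fixed only for a large iterate, and this is not good enough. I want a point of $P_2$ fixed by $f^{p_2}$ that also lies in $P_1$. Since $f^{p_2}(P_2)=P_2$, the map $f^{p_2}|_{P_2}$ has a fixed point $w\in P_2$, but $w$ need not lie in $P_1$. So the first approach I would try is to apply Theorem~\ref{thm:periods} to the map $h=f^{p_2}$ acting on the tree $T$. The orbit $P_1, h(P_1), h^2(P_1),\dots$ consists of subtrees all meeting $P_2$, because $h^j(P_1)\cap P_2\supset h^j(Q)\neq\emptyset$. Under $\tilde h$, the period of $P_1$ is $p_1/\gcd(p_1,p_2)\ge p_1/p_2>m$.

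The goal is then a contradiction with Theorem~\ref{thm:periods} whenever $P_1\not\subset P_2$. Let $A$ be the smallest subtree containing $P_1\cup P_2$. Suppose $P_1\not\subset P_2$. The subtree $P_2$ is $h$-invariant. For each $j$ the set $h^j(P_1)\setminus P_2$ is a union of branches hanging off $P_2$ at finitely many boundary points, and these branches are attached in at most $|\End(T)|$ directions. I would then show that the union $B=\bigcup_j h^j(P_1)\cup P_2$ is a subtree periodic under $\tilde h$ that contains a fixed point $w$ of $h$. Applying Theorem~\ref{thm:periods} to the $\tilde h$-periodic subtree $B$, or better to the subtree $P_1\cup\mathrm{conv}(P_1,w)$, would bound the period by $m$.

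The main obstacle is transferring the period bound from a set containing a fixed point back to $P_1$ itself. The point $w$ lies in $P_2$ but not necessarily in $P_1$. My concrete attempt there is as follows. Let $S$ be the bridge arc from $P_1$ to $w$ inside $P_2$, and replace $P_1$ by $P_1'=P_1\cup S$ with $S\subset P_2$. Then $P_1'\setminus P_2=P_1\setminus P_2$. Next I would show that $\tilde h^{j}(P_1')\setminus P_2 = h^j(P_1)\setminus P_2$ up to the combinatorics of the finitely many attaching points. Theorem~\ref{thm:periods} bounds the period of the pattern of $P_1\setminus P_2$ by $m$. Since $P_1\cap P_2$ has period dividing that of $P_1$, this forces the period of $P_1$ under $h$ to divide $m$, hence $p_1\le m p_2$, a contradiction.
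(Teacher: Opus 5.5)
\textbf{The final step is a genuine gap.} You know that the outer pattern $h^j(P_1)\setminus P_2$ returns after some $s\mid m$ steps. That says nothing about the inner part $h^j(P_1)\cap P_2$, and your justification runs the wrong way. It is trivial that the sets $h^j(P_1)\cap P_2$ repeat with period dividing that of $P_1$. What you need is the reverse: that the inner part repeats at least as fast as the outer one. This can fail.

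Let $T$ be a star with centre $o$ and unit arms $C,B_1,B_2$. Let $P_2$ be the closed ball of radius $1/2$ about $o$. Let $h$ be the identity on $C$ and swap $B_1,B_2$ isometrically. Let $P_1=C\cup\{x\in B_1\colon d(o,x)\le 1/4\}$. Then $h(P_2)=P_2$ and $h^j(P_1)\setminus P_2$ is constant, but $P_1$ has period $2$.

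Your auxiliary set $P_1'=P_1\cup S$ has a separate problem. It is not known to be $\tilde h$-periodic, since $h^q(S)$ is just some continuum in $P_2$ through $w$. So Theorem~\ref{thm:periods} does not apply to it.

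Bounding the outer pattern is exactly what the paper does, by passing to $T/_{P_2}$. There the image of $P_1$ is automatically periodic for the map induced by $h=f^{p_2}$ and contains the fixed point $\pi_{P_2}(P_2)$. Equivalently, you can apply Theorem~\ref{thm:periods} to the subtree $P_1\cup P_2$: its $\tilde h$-period is exactly that of the pattern, and it contains a fixed point of $h$.

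Be aware, though, that the paper then asserts that the period of $\pi_{P_2}(P_1)$ equals the period of $P_1$ under $\tilde f^{p_2}$. The star example shows this is false in general (period $1$ versus $2$). So the transfer you attempted is also left unproved in the written argument, not only in your sketch.

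\textbf{One way to close the gap, at the price of the constant.} Let $s\mid m$ be the period of the pattern and put $g=h^s$, so that $g^j(P_1)\setminus P_2=W:=P_1\setminus P_2$ for all $j$.
\begin{enumerate}
\item Since $g(P_1\cap P_2)\subset P_2$, you get $W\subset g(\overline W)$.
\item Hence the smallest subtree $V$ containing $\overline W$ satisfies $V\subset g(V)$.
\item Also $V\subset g^k(P_1)$ for every $k$, so $g^j(V)\subset\bigcap_k g^k(P_1)$ for every $j$.
\item Thus $U=\overline{\bigcup_j g^j(V)}$ is a subtree of $P_1$ with $g(U)\subset U$, and by the fixed point property it contains a fixed point of $g$.
\item Theorem~\ref{thm:periods} applied to $g$ then shows that the $\tilde g$-period of $P_1$ divides $m$.
\end{enumerate}
Consequently the $\tilde h$-period of $P_1$ divides $m^2$, so $p_1\le m^2p_2$ whenever $P_1\not\subset P_2$. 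In other words, the statement holds with $m^2$ in place of $m$.

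This weaker version suffices for every use of the corollary in Section~\ref{sec:center}, where only some fixed constant factor between periods is needed. Obtaining the constant $m$ itself would require a finer count of the directions involved in the two cycles.
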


\begin{proof}
	Suppose on the contrary, i.e. that 
    {$P_1\setminus P_2 \neq \emptyset$}.
	Since $f^{p_2}(P_2)=P_2$, we can define a factor system $(T/_{P_2},f^{p_2}/_{P_2}),$ where $T/_{P_2}$ is the factor space and $f^{p_2}/_{P_2}\colon T/_{P_2}\to T/_{P_2} $ is given by $f^{p_2}/_{P_2}\circ\pi_{P_2}=\pi_{P_2}\circ f^{p_2},$ where $\pi_{P_2}\colon T\to T/_{P_2}$ is the canonical projection.
	Note that $\lcm\{1,2,...,\vert \End(T)/_{P_2}\vert\}$ is a divisor of $m$ and $\pi_{P_2}(P_2)$ is a fixed point under $f^{p_2}/_{P_2}$ contained in $\pi_{P_2}(P_1)$.
	But the period of $\pi_{P_2}(P_1)$ under $\tilde{f^{p_2}}/_{P_2}$ is equal to the period of $P_1$ under $\tilde{f}^{p_2}$ and is strictly greater than $m$, a contradiction.
\end{proof}

{\begin{example}
Let $k_1=3$, $k_2=4$, $k_3=2$ and $k_4=5$
Let $A^i_j=[i,i+1/2]\times\{j\}$ for $i=1,2,3,4$ and $j=0,\ldots, k_i-1$ be a family of intervals in $\mathbb{R}^2$.
Let $X=\bigcup_{i,j}A^i_j$ and define $f((x,j))=(x,j+1 \mod k_i)$ for $x\in A_j^i$. Clearly $f$ is continuous.
Now, let $Y$ be a star obtained from $X$ by collapsing all points in $\mathbb{Z}^2\cap X$ to a point, let $\phi\colon X\to Y$ be the associated quotient map and let $g\colon Y\to Y$ be the map induced by $f$ (which is well defined and continuous, since $f(X\cap \mathbb{Z}^2)=X\cap \mathbb{Z}^2$. Now, let $A=\phi(A_0^1)\cup \phi(A_0^2)$ and $B=\phi(A_0^1)\cup \phi(A_0^3)\cup\phi(A_0^4)$ be two periodic continua for $g$. Then period of $A$ is $12$ and period of $B$ is $30$ while $|\End(Y)|=14$. While $A\cap B$ is a continuum with nonempty interior
    
\end{example}
}
\section{center}\label{sec:center}

{In this section we provide a characterization of Birkhoff center $\C(\tilde{f})$ of the induced system on the hyperspace $\mathcal{C}(T)$ of tree subcontinua, that is the set $\C(\tilde{f}):=\overline{\Rec(\tilde{f})}$.}


\begin{theorem}\label{thm:center}
	Let $f$ be a continuous selfmap of a topological tree $T$ and let $A\in \C(\tilde{f})$ be nondegenerate.
	Then $A$ is either periodic or asymptotically degenerate.
	Moreover, every two nondegenerate but asymptotically degenerate elements of $C(\tilde{f})$ are disjoint.
	In particular, for every nondegenerate and asymptotically degenerate $A\in C(\tilde{f})$ and for every $0\leq m<k$, $f^m(A)\cap f^k(A)=\emptyset$.
\end{theorem}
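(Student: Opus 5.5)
The approach is to combine Theorem~\ref{thm:rec-cont} with the period bound of Theorem~\ref{thm:periods} (through Corollary~\ref{cor:periods}) and an approximation argument. On a tree there are no circumferential sets, because $T$ contains no simple closed curve. Hence $\mathcal R=\emptyset$, and Theorem~\ref{thm:rec-cont} gives
\[\Rec(\tilde f)=\Per(\tilde f)\cup\{\{x\}\colon x\in\Rec(f)\}.\]
Thus a nondegenerate $A\in\C(\tilde f)$ is a Hausdorff limit of periodic subtrees $P_k$ with periods $p_k$; degenerate recurrent points cannot accumulate on a nondegenerate $A$. For the same reason, Theorem~\ref{thm:main} says every $A\in\C(T)$ is asymptotically periodic or wandering, and wandering continua in a tree are asymptotically degenerate. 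It therefore suffices to show that $A$ is either periodic or wandering (in the sense $f^m(A)\cap f^k(A)=\emptyset$ for $m<k$). The last sentence of the theorem is then part of what is proved, and the disjointness of two different asymptotically degenerate elements follows by the same argument applied to pairs.

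\textbf{Case 1: the periods $p_k$ are bounded.} Passing to a subsequence, $p_k=p$ for all $k$, and by continuity $f^p(A)=\lim f^p(P_k)=\lim P_k=A$. So $A$ is periodic.

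\textbf{Case 2: $p_k\to\infty$.} Suppose, towards a contradiction, that $f^m(A)\cap f^k(A)\neq\emptyset$ for some $m<k$. Since $A$ is nondegenerate, choose two points $a,b\in\Int A$ (interior relative to an edge) and a small subarc $J\subset\Int A$. For large $k$ we have $J\subset P_k$ and $f^m(P_k)\cap f^{k}(P_k)$ nearly intersecting. I would try to get an actual intersection by replacing $A$ with the arc $J$, so that $f^m(J)\subset f^m(P_k)$. Since the $P_k$ and their images are subtrees, $f^m(P_k)\cup f^k(P_k)$ is connected once they intersect.

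The key tool is Corollary~\ref{cor:periods}, with $M=\lcm\{1,\dots,|\End(T)|\}$: two intersecting periodic subtrees whose periods differ by more than the factor $M$ are nested. I would apply this to $P_k$ and $P_l$ with $p_l>Mp_k$. Both contain $J$ for large $k,l$, so they intersect, and hence $P_l\subset P_k$. This produces a nested family of periodic subtrees containing $J$ with periods tending to infinity.

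Each $P_k$ has pairwise disjoint or at least distinct images $f^i(P_k)$, $0\le i<p_k$. Using the factor map $T/_{P_k}$ trick from Corollary~\ref{cor:periods}, if $f^{j}(P_l)$ meets $P_l$ for some $0<j<p_l$ not divisible by $p_k$, we get a contradiction with nesting. From this I would conclude that $f^i(J)\cap J=\emptyset$ for every $i\ge1$. That contradicts $f^m(A)\cap f^k(A)\ne\emptyset$ once we pass from $J$ to $A$, and it yields wandering. For two asymptotically degenerate $A,B\in\C(\tilde f)$ that intersect, approximate both by periodic trees with unbounded periods and apply Corollary~\ref{cor:periods} again to get nesting, hence $A=B$ or disjointness.

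\textbf{Main obstacle.} The hardest step is passing from "approximating periodic trees are nested with growing periods" to "$A$ itself is wandering". The difficulty is that the hypothesis $f^m(A)\cap f^k(A)\neq\emptyset$ only gives approximate intersections at the level of $P_k$, and the intersection may occur only at boundary points of $A$. I would first try to exploit the structure of Theorem~\ref{thm:main}. If $A$ is not wandering, it is asymptotically periodic with limit a periodic continuum $D$ of some period $q$. Because $A\in\C(\tilde f)$ is approximated by $P_k$ with $p_k\gg Mq$, the sets $P_k$ (or their iterates) intersect $D$ or its orbit, so Corollary~\ref{cor:periods} forces $P_k\subset f^i(D)$. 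Conversely, nondegeneracy together with Theorem~\ref{thm:periods} applied in the quotient $T/_{f^i(D)}$ should force $A$ to be periodic.
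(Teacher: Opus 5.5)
Your setup matches the paper. There are no circumferential sets on a tree, so $\mathcal R=\emptyset$ and a nondegenerate $A\in\C(\tilde f)$ is a limit of periodic subtrees $P_k$. Bounded periods give periodicity, and for unbounded periods Corollary~\ref{cor:periods} makes a subsequence of the $P_k$ nested with $A=\bigcap_k P_k$.

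The gap is the core of Case~2, which you flag yourself. The mechanism via the arc $J$ does not work, for two reasons:
\begin{itemize}
\item An intersection $f^j(P_l)\cap P_l\neq\emptyset$ with $p_k\nmid j$ yields only $f^j(P_k)\cap P_k\neq\emptyset$, which contradicts nothing. Distinct members of one periodic orbit of subtrees can intersect (arms of a star cyclically permuted about a fixed centre), and Corollary~\ref{cor:periods} says nothing about equal periods.
\item Even $f^i(J)\cap J=\emptyset$ for a small arc $J\subset A$ would not give $f^i(A)\cap A=\emptyset$; the implication runs the other way.
\end{itemize}
In your fallback, the claim that the $P_k$ meet the orbit of $D$ does not follow from Hausdorff approximation alone. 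The appeal to Theorem~\ref{thm:periods} in $T/_{f^i(D)}$ is not an argument. What is missing is a reverse containment.

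The missing step is to use $A\subset P_k$ together with $f^{p_k}(P_k)=P_k$.
\begin{itemize}
\item Suppose $f^i(A)\cap f^j(A)\neq\emptyset$ for some $i<j$. By Theorem~\ref{thm:main}, with no circumferential sets on a tree, $A$ is asymptotically periodic.
\item Put $q=|\omega_{\tilde f}(A)|$ and $F=\lim_n f^{nq}(A)$, a periodic continuum of period $q$, possibly a singleton.
\item Then $f^{nqp_k}(A)\subset f^{nqp_k}(P_k)=P_k$ for all $n$. So $F\subset P_k$ for every $k$, and hence $F\subset\bigcap_k P_k=A$.
\item In particular $F\cap P_k\neq\emptyset$, and $p_k>Mq$ for large $k$, so Corollary~\ref{cor:periods} gives $P_k\subset F$.
\item Hence $A\subset P_k\subset F\subset A$, so $P_k=A$ for all large $k$, contradicting $p_k\to\infty$. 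If $F$ is a singleton, $P_k\subset F$ already contradicts nondegeneracy of $P_k$.
\end{itemize}
This is essentially the paper's argument for the first assertion: $A_n\subset F_A$ by Corollary~\ref{cor:periods}, and $F_A\subset A_t$ by $f^{p_t}$-invariance of $A_t$.

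Once this is in place, your reduction to ``periodic or wandering'' delivers the first and third assertions together. That is slightly more economical than the paper, which proves the third assertion separately. There, $f^m(A)=f^k(A)$ is obtained through mutual nesting of $\{f^m(A_n)\}$ and $\{f^k(A_n)\}$, and this is then contradicted via Theorem~\ref{thm:periods} at the periodic point $f^m(A)$. Your sketch of the disjointness assertion is the paper's argument and is fine once the nested approximants are available.
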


\begin{proof}
	We begin by proving the first assertion of the theorem.	
	By Theorem~\ref{thm:main} (see also \cite[Theorem 4.1]{Matviichuk}) 
	$A$ is either asymptotically degenerate or asymptotically periodic.
	If $A$ is asymptotically degenerate we are done so suppose it is asymptotically periodic and let us prove that it is periodic. 
	 Let $\{A_n\}_{n\geq 1}$ be a sequence of recurrent points of $\tilde{f}$ such that $\lim_n A_n=A$.
	We can assume that $\{A_n\}_{n\geq 1}$ is nondegenerate for all $n\geq 0$ so, by Theorem~\ref{thm:rec-cont}, $\{A_n\}_{n\geq 1}$ is periodic for all $n\geq 0$.  
	 Moreover, for all $n\geq 1$, denote by $p_n$ the period of $A_n$.
	 If the set $\{p_n\colon n\geq 1\}$ is bounded from above by some $M\geq 1$ then, by 
	 continuity of $\tilde{f}^{M!}$, $f^{M!}(A)=\lim_n f^{M!}(A_n)=\lim_n A_n=A$ so $A$ is indeed periodic in this case.
	 
	 Now let us assume that the set $\{p_n\colon n\geq 1\}$ is unbounded. 
	  We will show that this case is impossible.
	 Denote $m=\lcm\{1,2,...,\vert \End(T)\vert\}$.
	 Without loss of generality we can assume that, for every $n\geq 1$, $p_{n+1}>mp_n$.
	 Since $A$ is nondegenerate, there is some $n_0\geq 1$ such that $A_{n_1}\cap A_{n_2}\neq \emptyset$ for all $n_1,n_2\geq n_0$.
	 {Removing a few first elements if necessary, we may} assume that $n_0=1$.
By Corollary~\ref{cor:periods}, $\{A_n\}_{n\geq 1}$ is a decreasing sequence. 
	Let $p$ be the cardinality of $\omega_{\tilde{f}}(A)$ and denote $F_A=\lim_n f^{np}(A)$. 
	We claim that $A=F_A$.

First let us show that $A\cap F_A\neq\emptyset$.
	Suppose	on the contrary, that $F_A\cap A=\emptyset$.
	Then there is some $r\geq 1$ such that $A_r\cap F_A=\emptyset$.
{But $\{A_n\}_{n\geq 1}$ is decreasing sequence and then $\lim_n A_n=A$ gives $A\subset A_r$. Therefore, $f^{npp_r}(A)\subset A_r$ for all $n\geq 0$ and hence $A_r\cap F_A\neq \emptyset$, a contradiction.}
	Indeed, $A\cap F_A\neq\emptyset$.
{By the previous argument, we also know that  $A_n\cap F_A\neq \emptyset$ for every $n\geq 0$.}
	But $F_A$ is periodic with {period $p$ and $p_n>mp$} for all but at most finitely many $n\geq 0$.
	It follows that $A_n\subset F_A$ for all but at most finitely many $n\geq 0$ which yields that $A\subset F_A.$

If $A\neq F_A$, there is some $t\geq 1$ such that $F_A\setminus A_t\neq\emptyset$.
	But $A\subset A_t$, $f^{p_t}(A_t)=A_t$ while $\lim_n f^{np_tp}(A)=F_A$, which is a contradiction.	
	Therefore, $A=F_A$ so $A\subset A_n\subset F_A=A$ for all but at most finitely many $n$.
	
	
	 {Next we are going to prove, that if we} take any two distinct nondegenerate and asymptotically degenerate $A_1,A_2\in C(\tilde{f})$ such that $A_1\cap A_2\neq \emptyset$ 
	then $A_1=A_2$.
 So fix two sets $A_1,A_2$ as above. 
	By Corollary~\ref{cor:periods}, 
	there are nested decreasing sequences $\{A^1_n\}_{n\geq 0}$ and $\{A^2_n\}_{n\geq 0}$ of periodic points of $\tilde{f}$ with strictly increasing periods such that $\lim_n A^1_n=A_1$ and $\lim_n A^2_n=A_2$.
	For every $n\geq 0$, $A^1_n$ intersects $A^2_k$ for all $k\geq 0$.
	But since the periods of $A^2_k$ are strictly increasing, by Corollary~\ref{cor:periods}, for all but at most finitely many $k\geq 0$, $A^2_k\subset A^1_n$.
	Therefore, $A_2\subset A^1_n$ for all $n\geq 0$.
	Hence $A_2\subset A_1$.
Analogously we get that $A_2\subset A_1$ and we are done.	

	 Finally, fix some nondgenerate but asymptotically degenerate {$A\in \C(\tilde{f})$} and suppose that $f^m(A)\cap f^k(A)\neq\emptyset$ for some $m<k$.
	By previous arguments, there is a decreasing sequence $\{A_n\}_{n\geq 0}$ of periodic points of $\tilde{f}$ such that $\cap_n A_n=A$.
	But then also $\{f^m(A_n)\}_{n\geq 0}$ and $\{f^k(A_n)\}_{n\geq 0}$ make decreasing sequences of periodic points of $\tilde{f}$ converging towards $f^m(A)$ and $f^k(A)$, respectively.
	But each element of one of those sequences intersects all the elements of the other one so, just like in the previous paragraph, we obtain that $f^m(A)=f^k(A)$.
	However, $A$ is asymptotically degenerate, so $f^m(A)$ is a singleton fixed by $\tilde{f}^{k-m}$.
	But then the elements of the sequence $\{f^m(A_n)\}_{n\geq 0}$ are periodic points of $\tilde{f}$ with unbounded sequence of periods containing the same periodic point, which contradicts the result stated in Theorem~\ref{thm:periods}.
	This proves the last assertion.
\end{proof}

	Now we are ready to fully characterize $\C(\tilde{f})$ in case of a tree map $f$.
	Let $T$ be a topological tree and let $\mathcal{K}=\{K_n\}_{n\geq 1}$ be a sequence of cycles of trees generating a solenoid in $T$.
	Denote by $\mathcal{S}(\mathcal{K})$ the set of all nondegenerate connected components of $Q=\cap_n K_n$.
	Now let $\mathcal{S}$ be the union of all $\mathcal{S}(\mathcal{K})$ over all generating sequences $\mathcal{K}$. 

\begin{theorem}\label{thm:center-charact}
	Let $f$ be a continuous selfmap of a topological tree $T$. 
	Then
\begin{equation}\label{eqn:center}
\C(\tilde{f})=\{\{x\}\colon x\in\C(f)\}\cup\Per(\tilde{f})\cup\mathcal{S}.
\end{equation}
\end{theorem}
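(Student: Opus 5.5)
We prove the two inclusions in \eqref{eqn:center} separately. For ``$\supseteq$'': singletons $\{x\}$ with $x\in\C(f)$ are limits of recurrent singletons $\{x_n\}$, $x_n\in\Rec(f)$, and these are recurrent for $\tilde f$ by Theorem~\ref{thm:rec-cont}. Periodic continua are recurrent. The main work in this direction is to show $\mathcal{S}\subset\C(\tilde f)$. Let $S$ be a nondegenerate component of $Q=\bigcap_n K_n$, where $\mathcal{K}=\{K_n\}$ is a generating sequence. For each $n$ let $K_n^S$ be the component of $K_n$ containing $S$. The sets $K_n^S$ are nested subtrees, and each is periodic for $\tilde f$ because $f^{p_n}(K_n^S)=K_n^S$. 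Since the components of $Q$ are exactly the nested intersections of components of the $K_n$, we get $\bigcap_n K_n^S=S$. Hence $K_n^S\to S$ in the Hausdorff metric, so $S\in\overline{\Per(\tilde f)}\subset\C(\tilde f)$.

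For ``$\subseteq$'': take $A\in\C(\tilde f)$. If $A=\{x\}$ is degenerate, we write $A=\lim A_n$ with $A_n\in\Rec(\tilde f)$. If infinitely many $A_n$ are singletons, then $x\in\overline{\Rec(f)}=\C(f)$. Otherwise the $A_n$ are nondegenerate; on a tree $\mathcal{R}=\emptyset$, since there are no circumferential sets, so the $A_n$ are periodic by Theorem~\ref{thm:rec-cont}. A periodic subtree contains a periodic point, because $f^{p}$ maps the subtree onto itself. Choosing $x_n\in A_n\cap\Per(f)$ gives $x_n\to x$, so again $x\in\C(f)$.

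Now let $A$ be nondegenerate. By Theorem~\ref{thm:center}, $A$ is periodic or asymptotically degenerate, and only the second case needs work. Its proof yields a decreasing sequence of periodic subtrees $A_n$ with periods $p_n$ satisfying $p_{n+1}>mp_n$ and $\bigcap_n A_n=A$. We set $K_n=\bigcup_{i<p_n}f^i(A_n)$. By the last assertion of Theorem~\ref{thm:center}, the iterates of $A$ are pairwise disjoint. Passing to a subsequence and using compactness, the sets $f^i(A_n)$, $0\le i<p_n$, should be pairwise disjoint. We argue this by contradiction: if $f^i(A_n)\cap f^j(A_n)\neq\emptyset$ for infinitely many $n$ with the same $i\neq j$, then Corollary~\ref{cor:periods}, applied in the quotient by a common periodic piece, forces $f^i(A)\cap f^j(A)\neq\emptyset$. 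This step is the main obstacle. It must be handled uniformly in $i,j<p_n$, which requires applying the disjointness of iterates of $A$ at the level of the approximants, not only in the limit. Once disjointness holds, the $K_n$ are cycles of trees with periods $p_n\to\infty$. They are nested because $A_{n+1}\subset A_n$ implies $f^i(A_{n+1})\subset f^{i\bmod p_n}(A_n)$.

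It remains to identify $A$ as a component of $Q=\bigcap_n K_n$. The component of $K_n$ containing $A$ is $A_n$, so the component of $Q$ containing $A$ is contained in $\bigcap_n A_n=A$. Conversely, $A$ is a connected subset of $Q$, since $A\subset A_n\subset K_n$ for every $n$. Hence $A$ is exactly a nondegenerate component of $Q$, that is, $A\in\mathcal{S}(\mathcal{K})\subset\mathcal{S}$, which completes the inclusion.
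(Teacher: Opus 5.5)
There is a genuine gap, and it is the step you flag and leave open: that after passing to a subsequence the sets $f^i(A_n)$, $0\le i<p_n$, are pairwise disjoint. Everything after it depends on this step, namely that $K_n$ is a cycle of trees of period $p_n$ and that $A_n$ is the component of $K_n$ containing $A$. The claim is false in general for the approximants coming from the proof of Theorem~\ref{thm:center}, which are arbitrary nested periodic continua converging to $A$.

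For example, take an interval map with a period-doubling solenoid. Let $J_n$ be the component of $K_n$ (period $2^n$) containing a nondegenerate component $A=\bigcap_n J_n$ of $Q$. Suppose $g=f^{2^n}$ is monotone on the gap of $J_n$ between $J_{n+1}$ and $g(J_{n+1})$. This happens, e.g., when $A$ contains the flat top of a unimodal map, since then $g|_{J_n}$ turns only on that flat top, which lies in $J_{n+1}$. That gap contains a fixed point $c_n$ of $g$, and the arc $A'_n$ spanned by $J_{n+1}\cup\{c_n\}$ satisfies $g^2(A'_n)=A'_n\neq g(A'_n)$. So $A'_n$ has period $2^{n+1}$ and $J_{n+1}\subset A'_n\subset J_n$; the $A'_n$ are nested and converge to $A$. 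Yet $A'_n\cap f^{2^n}(A'_n)=\{c_n\}$ for every $n$. The overlapping pair of indices is $(0,p_n/2)$, which moves with $n$. So your compactness argument with a fixed pair $i\neq j$ never applies, and no contradiction with the disjointness of the iterates of $A$ can be extracted.

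What is missing is either the paper's idea of first locating $A$ in a solenoid, or a version of your construction that tolerates overlaps.

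\emph{The paper's route.} It shows $A\cap\Per(f)=\emptyset$ and that $A$ misses every basic set (Lemma~\ref{lem:basicset-sub}, Corollary~\ref{cor:basicset-sub}). It also shows $A\cap\overline{\Per(f)}\neq\emptyset$, using fixed points of $f^{p_n}$ in $A_n$, so $A$ meets a solenoidal set. It then takes the genuine cycles $M_n$ of that solenoid and applies Corollary~\ref{cor:periods} in both directions to get $A=\bigcap_n M_n^{k_n}$.

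\emph{Repairing your route.} Put $K_n=\bigcup_{i\ge 0}f^i(A_n)$. This is always a cycle of trees, of some period $q_n\mid p_n$, and $K_{n+1}\subset K_n$.
\begin{itemize}
\item Its component $D_n\supset A$ contains a fixed point of $f^{q_n}$, lying in some $f^i(A_n)$. Theorem~\ref{thm:periods} applied to $f^{q_n}$ then gives $p_n\mid mq_n$, hence $q_n\to\infty$.
\item Let $E=\bigcap_n D_n$, the component of $\bigcap_n K_n$ containing $A$. It is a nondegenerate element of $\C(\tilde f)$ and is not periodic, since a period $r$ would force $q_n\mid r$.
\item By Theorem~\ref{thm:center}, $E$ is therefore asymptotically degenerate, and the disjointness assertion of that theorem gives $E=A$.
\end{itemize}
This repaired route would avoid the basic-set analysis.

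Your treatment of the inclusion $\supseteq$ and of the degenerate elements of $\C(\tilde f)$ is fine.
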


\begin{proof}

	Let $A\in\C(\tilde{f})\setminus\Per(\tilde{f})$ be nondegenerate.
	We claim that $A$ is a wandering tree which is a connected component of the intersection of a generating sequence of some solenoid.
	
	Since, by Theorem~\ref{thm:center}, $f^n(A)\cap A=\emptyset$ for all $n>0$, obviously $A\cap\Per(f)=\emptyset$.
	We also claim that $A$ does not intersect any basic set.
	To prove this assertion, let us suppose on the contrary, that there is some maximal basic set $\omega$ such that $A\cap\omega\neq\emptyset$.  
{Repeating argument from the proof of Theorem~\ref{thm:center}, we obtain }
a decreasing sequence $\{A_n\}_{n\geq 1}$ of some periodic trees with unbounded sequence of periods $\{p_n\}_{n\geq 1}$ such that $A=\cap_n A_n$.
	Therefore, for every $n\geq 0$, $A_n$ intersects $\omega$.
	Let $K_1,K_2,...,K_m$ be trees described in Lemma~\ref{lem:basicset-sub}, whose union contains $\omega$.
	If $\Int A_n$ intersects $\omega\cap K_i$ for some $1\leq i\leq m$ and infinitely many $n\geq 0$, then all of such $A_n$ contain $K_i$ and hence $K_i\subset A$, which is impossible as $A$ is asymptotically degenerate.
	Therefore, all but at most finitely many $A_n$ intersect $\omega\cap K_i$ only with the boundary for every $1\leq i\leq m$.
	This, by Corollary~\ref{cor:basicset-sub} implies that all but at most finitely many elements of $\{A_n\}_{n\geq 1}$ contain some periodic point from some $K_i$.
	But, since $\{A_n\}_{n\geq 1}$ forms a nested sequence and since those $A_n$ intersect $\omega\cap K_i$ only at the boundary, there can be at most finitely many such periodic points so there are infinitely many elements of $\{A_n\}_{n\geq 1}$ containing same periodic point $z\in\omega$.
	But then $z\in A$, leading to a contradiction.	

	By the fixed-point property, every $A_n$ intersects $\Per(f)$ and hence $A\cap\overline{\Per(f)}\neq\emptyset$.
	This, together with the above observation, implies that $A$ intersects some solenoidal $\omega$-limit set.
	It also intersects only one maximal $\omega$-limit set since otherwise it would not be asymptotically degenerate, as distinct maximal solenoidal $\omega$-limit sets are compact and pairwise disjoint and therefore have positive distance.	
	
	Now take some point $x\in A\cap \overline{\Per(f)}$ from a maximal solenoidal $\omega$-limit set $Q_{\max}$.
	There is a nested (decreasing) sequence $\{M_n\}_{n\geq 1}$ of trees with strictly increasing and unbounded sequence of periods $\{r_n\}_{n\geq 1}$ such that $Q_{\max}\subset Q=\cap_n M_n$.
	For each $n\geq 1$ let us denote the components of $M_n$ by $M^i_n$ for $1\leq i\leq r_n$.
	 For a point $x$ introduced above there is a sequence $\{k_n\}_{n\geq 1}$, with $1\leq k_n\leq r_n$ for all $n\geq 0$, such that $x\in \cap_n M_n^{k_n}$.
	 We want to prove that $A=\cap_n M_n^{k_n}$.
	 
	 To that end, firstly fix some $n\geq 1$ and denote $t=\lcm\{1,2,...,\vert \End(T)\vert\}$.
	 Since $x\in M_n^{k_n}$ and $x\in A\subset A_k$ for all $k\geq 1$ and since $p_k>tr_n$ for all but at most finitely many $k$, by Corollary~\ref{cor:periods} it follows that $A_k\subset M_n^{k_n}$ for all but at most finitely many $k\geq 1$.
	 In particular, $A\subset M_n^{n_k}$.
	 But $n\geq 1$ was arbitrary so $A\subset \cap_n M_n^{k_n}$.
	 Now fix some arbitrary $k\geq 1$. 
	 Again, $x\in M_n^{k_n}$ for all $n\geq 1$ and $x\in A\subset A_k$ so we have that $ M_n^{k_n}\cap A_k\neq\emptyset$ for all $n\geq 1$.
	 But $r_n>tp_k$ for all but at most finitely many $n\geq 1$.
	 Therefore $M_n^{k_n}\subset A_k$ for all but at most finitely many $n$ and, in particular, $\cap_n M_n^{k_n}\subset A_k$.
	 But this holds for any $k\geq 0$ and therefore $\cap_n M_n^{k_n}\subset A$ and we are done.
	
	This proves the inclusion $\subset$ in~\eqref{eqn:center}.
	The opposite inclusion is trivial.
\end{proof}

\section{Almost equicontinuity}\label{sec:equicontinuity}

	Let $T$ be a tree, $E\subset T$ be an edge of $T$ and $X\subset E$ be its subset.
	We will say that a point $y\in \Int E$ is {two-sided} 
	accumulation point of $X$ if {every} one-sided neighborhood of $y$ intersects $X$.

	First we prove this easy lemma.
\begin{lemma}\label{lem_accumulation_pts}
	Let $X$ be an uncountable subset of an edge of $T$.
	Then all but at most countably many points of $X$ are its two-sided accumulation points.	
\end{lemma}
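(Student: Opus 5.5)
We identify the edge $E$ with a closed interval $[a,b]\subset\mathbb{R}$ by an order-preserving homeomorphism, so that for a point $y\in\Int E$ the two sides of $y$ correspond to the left and right half-neighborhoods $(y-\eta,y)$ and $(y,y+\eta)$. Endpoints of $E$ are at most two points of $X$, so they can be discarded. A point $y\in X\cap\Int E$ fails to be a two-sided accumulation point of $X$ exactly when some one-sided neighborhood of $y$ misses $X$. So it suffices to show that the sets
\[X_L=\{y\in X\cap \Int E\colon (y-\eta,y)\cap X=\emptyset \text{ for some } \eta>0\}\]
and the analogous set $X_R$ for right half-neighborhoods are both at most countable.

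For $X_L$ we will construct an injection into $\mathbb{Q}$. To each $y\in X_L$ we assign a rational number $q_y\in (y-\eta_y,y)$, where $\eta_y>0$ witnesses that $(y-\eta_y,y)\cap X=\emptyset$. Now suppose $y<y'$ are both in $X_L$. Since $y\in X$ and $(y'-\eta_{y'},y')\cap X=\emptyset$, we have $y\le y'-\eta_{y'}$. It follows that the intervals $(y-\eta_y,y)$ and $(y'-\eta_{y'},y')$ are disjoint, so $q_y\neq q_{y'}$. Hence $y\mapsto q_y$ is injective and $X_L$ is countable. The argument for $X_R$ is symmetric.

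Consequently, the set of points of $X$ that are not two-sided accumulation points of $X$ is contained in $X_L\cup X_R\cup\{a,b\}$, which is at most countable. This is the claim.

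There is no real obstacle here. The only point needing care is the disjointness of the witnessing intervals, which uses the fact that $y$ itself belongs to $X$. The uncountability hypothesis is used only to make the conclusion nontrivial, namely to guarantee that such accumulation points exist.
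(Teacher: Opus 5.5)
Your proof is correct and follows essentially the same idea as the paper. The one-sided gaps that witness the failure are pairwise disjoint open subarcs of the edge, because each such $y$ itself lies in $X$, so there can be only countably many of them. The paper argues by contradiction, passing to an uncountable subfamily whose gaps lie on the same side. You instead give an explicit injection into $\mathbb{Q}$ for each side, and you spell out the disjointness step, which the paper leaves implicit.
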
	

\begin{proof}
	Suppose on the contrary, i.e. that there are uncountably many points in $X$ which are not its two-sided accumulation points.
	Let $Y\subset X$ be the set of such points which are cut points of $T$.
	For each such $y$, there is an open free arc $L_y$, contiguous to $y$, such that $L_y\cap X=\emptyset$.
	Without loss of generality, we can assume that $Y$ contains uncountable subset $Y^\prime$ such that, for all $y\in Y^\prime$, $y$ is the minimal point of $\overline{L_y}$ in the ordering of the edge. 
	Then the set $\{L_y\colon y\in Y^\prime\}$ consists of uncountably many pairwise disjoint open arcs contained in an edge of $T$, which is a contradiction.
\end{proof}


\begin{proof}[Proof of Theorem~\ref{thm:almost_equi}]
	Since the set of all the points of equicontinuity is $G_\delta$ for 
    {every} compact dynamical system, it remains to prove that the set $\mathcal{E}$ of points of equicontinuity of $(C(T),\tilde{f})$ is dense in $C(T)$.
	To this end, let us take any open set $\mathcal{U}$ in $C(T)$ and let us prove that $\mathcal{U}\cap\mathcal{E}\neq\emptyset$.
	Note that there is some open set $\mathcal{U}^\prime\subset \mathcal{U}$ not containing singletons and such that $J\cap\End(T)=\emptyset$ for {all $J\in \mathcal{U}^\prime$ and $J\cap \Br(T)=\Int J\cap \Br(T)$ for all $J\in \mathcal{U}^\prime$.
This allows us to assume without loss of generality that $\mathcal{U}=\mathcal{U}^\prime$. }

{If there is some asymptotically degenerate $A\in \mathcal{U}$ then let us pick some $C\in \mathcal{U}$, $C\subset\Int A$.
	We claim that $C\in\mathcal{E}$.
	Fix any $\epsilon>0$.
	There is some $n_0\geq 0$ such that $\diam f^n(A)<\epsilon$ for all $n\geq n_0$.
	There exists $\delta>0$ such that $N(C,\delta)\subset A$ (recall properties of $\mathcal{U}^\prime$) and $\dist(f^n(D),f^n(C))<\epsilon$ for all $D\in B_{\dist_H}(C,\delta)$ and all $0\leq n\leq n_0$.
	This proves the claim and completes the proof for this case.

	Next, let us consider the remaining case, which by Theorem~\ref{thm:main} allows us to assume that all the elements of $\mathcal{U}$ are asymptotically periodic.
	Fix a set} $A\in \mathcal{U}$ and let $p$ be the cardinality of $\omega_{\tilde{f}}(A)$.
	Denote $F_A=\lim_n f^{np}(A)$.
	Then $F_A$ is a subtree of $T$, fixed under $f^p$.
	Denote $m=\lcm\{1,2,...,\vert \End(T)\vert\}$.	
	
	Note that for any $B\supset A$, the limit $\lim_n f^{nmp}(B)$ exists and it is a superset of $F_A$. 
	 Indeed, $\omega_{\tilde{f^p}}(B)$ is a periodic orbit of some period $r$.
	  Hence, $\lim_n f^{npr}(B)$ exists and equals some subtree $R$ of $T$ which contains $\lim_n f^{npr}(A)=F_A$.
	  But then $r$, as the period of $R$ under $f^p$, is a divisor of $m$ {by Theorem~\ref{thm:periods}}. 
	  Therefore, the limit $\lim_n f^{nmp}(B)$ exists and it is a superset of $F_A$.
	
	Now we can choose some sequence $\{A_n\}_{\geq 0}$ in $\mathcal{U}$ such that $A_0=A$ and $A_{n}\subset\Int A_{n+1}$.
	For each $n\geq 0$ we denote $F_{A_n}=\lim_k f^{kmp}(A_n)$.
	Note that each $F_{A_n}$ is fixed under $f^{mp}$ and also $F_{A_n}\subset F_{A_{n+1}}$ for all $n\geq 0$.
	Since there are finitely many branching points in $T$, there are some $0\leq n_1<n_2$ such that $F_{A_{n_1}}$ and $F_{A_{n_2}}$ intersect interiors of the same edges of $T$. 	   
	$\mathcal{U}^\prime=\{D\in C(T)\colon A_{n_1}\subset\Int D\text{ and } D\subset\Int A_{n_2}\}$ is nonempty open subset of $\mathcal{U}$ so we can {again}, without loss of generality, assume that $\mathcal{U}=\mathcal{U}^\prime$.
	Since $A_{n_1}\subset \Int A_{n_2}$, there is some $\delta>0$ such that $N(A_{n_1},\delta)\subset A_{n_2}$.
	For each $0<\gamma<\delta$, we define $B_\gamma=\overline{N(A_{n_1},\gamma)}\in \mathcal{U}$.
	Note that $\lim_n f^{nmp}(B_\gamma)$ exists for all $0<\gamma<\delta$ and let us denote that limit by $F_\gamma$.
	Note that, for each $0<\alpha<\gamma <\delta$ we have $B_\alpha\subset B_\gamma$ and $F_\alpha\subset F_\gamma$. 	
	 Now we consider the family $\{F_\gamma\colon \gamma\in(0,\delta)\}$.
	 {Decreasing $\delta$ when necessary, we may require that }for each edge $E$ of $T$ and for each $\alpha,\gamma\in(0,\delta)$, $E\subset F_\alpha$ if and only if $E\subset F_\gamma$, {and similarly we may require that} $E\cap F_\alpha=\emptyset$ if and only if $E\cap F_\gamma=\emptyset$.	
		 
	Therefore, we can numerate all the edges of $T$ by $E_1,E_2,...,E_r$, in a way that for some $t$, $1\leq t\leq r$, and for all $\gamma\in(0,\delta)$, $\partial F_\gamma\cap E_i\neq\emptyset$ for all $1\leq i\leq t$ and $\partial F_\gamma\cap E_i=\emptyset$ for all $t<i\leq r$.
	
	After all these preparations we are finally in position to pick an element of $\mathcal{E}$ in $\mathcal{U}$.
	Consider the set $Y_1=\{\partial F_\gamma\cap E_1\colon \gamma\in(0,\delta)\}$.
	If this set is uncountable then, by Lemma~\ref{lem_accumulation_pts}, there is uncountable set $X_1\subset (0,\delta)$ such that for all $\gamma\in X_1$, the unique element of $\partial F_\gamma\cap E_1$ is two-sided accumulation point of $Y_1$.
	On the other hand, if $Y_1$ is at most countable then there is some uncountable set $X_1\subset (0,\delta)$ and a point $y_1\in Y_1$ such that $\partial F_\gamma\cap E_1=\{y_1\}$ for all $\gamma\in X_1$.
	Now we proceed inductively. 
	For each $k,\ 2\leq k\leq t$ we construct the set $Y_k=\{\partial F_\gamma\cap E_k\colon {\gamma\in X_{k-1}}\}$.
	If $Y_k$ is uncountable, by Lemma~\ref{lem_accumulation_pts}, there is uncountable set $X_k\subset X_{k-1}$ such that for all $\gamma\in X_k$, the unique element of $\partial F_\gamma\cap E_k$ is two-sided accumulation point of $Y_k$.
	In the case when $Y_k$ is at most countable, there is some uncountable set $X_k\subset X_{k-1}$ and a point $y_k\in Y_k$ such that $\partial F_\gamma\cap E_k=\{y_k\}$ for all $\gamma\in X_k$.
 
	We will prove that, for every $\gamma\in X_t$, $B_\gamma\in\mathcal{E}$, so it is enough to pick one of them to complete the proof.
	To that end, let us take some $\gamma\in X_t$ and some $\epsilon>0$.
	Since $\tilde{f}$ is uniformly continuous, there is some $\kappa>0$ such that, for all $K,L\in C(T)$, $\dist_H(K,L)<\kappa$ implies $\dist_H(f^n(K),f^n(L))<\epsilon$ for all $0\leq n\leq mp$.
	By construction, there are some $\alpha_1,\alpha_2$, $0<\alpha_1<\gamma<\alpha_2<\delta$ such that $\dist_H(F_{\alpha_i},F_\gamma)<\kappa/2$ for $1\leq i\leq 2$. 	
	There is some $n_0\geq 0$ such that for all {$n\geq n_0$}, $\dist_H(F_{\alpha_1},f^{nmp}(B_{\alpha_1}))<\kappa/2$ and $\dist_H(F_{\alpha_2},f^{nmp}(B_{\alpha_2}))<\kappa/2$.
	There is also some $\zeta>0$ such that for all $C\in  B_{\dist_H}(B_\gamma, \zeta)$, $B_{\alpha_1}\subset C\subset B_{\alpha_2}$ and,
	for all $0\leq n\leq n_0$, $\dist_H(f^{nmp}(C),f^{nmp}(B_\gamma))<\kappa$.
	Indeed, for  all $C\in  B_{\dist_H}(B_\gamma, \zeta)$ and for all $n\geq 0$, $\dist_H(f^{nmp}(C),f^{nmp}(B_\gamma))<\kappa$.
	Therefore, for all $n\geq 0$, $\dist_H(f^{n}(C),f^{n}(B_\gamma))<\epsilon$. 
	This proves that $B_\gamma\in\mathcal{E}$.
	The theorem is proved.
		
\end{proof}

\section*{Acknowledgements}
{The authors are grateful to the referee of the manuscript for very careful reading and several valuable comments that helped improve exposition in the paper.}

Research of D. Jeli\'c was supported by NextGenerationEU foundation, project: IP-UNIST-44 (ITPEM).
This paper was partially written during Domagoj Jeli\'c’s visit to Charles University in Prague, funded by the Croatian Science Foundation under the Outbound Mobility of Research Assistants program (Call ID: MOBDOK-2023-2114).

\bibliographystyle{plain}
\bibliography{bibAsymptoticalBehavior2}	
	
%
\end{document}